\theoremstyle{plain}
\newtheorem{theorem}{Theorem}
\newtheorem{proposition}[theorem]{Proposition}
\newtheorem{lemma}[theorem]{Lemma}
\newtheorem{corollary}[theorem]{Corollary}
\newtheorem{remark}[theorem]{Remark}
\newtheorem{definition}[theorem]{Definition}
\newcommand{\noopsort}[1]{}
\begin{document}
	\title{Linear Stochastic Dyadic model}
	\author{Luigi Amedeo Bianchi}  
	\address{Luigi Amedeo Bianchi\\
		 Dipartimento di Matematica\\ 
		 Università degli Studi di Trento\\ 
		 Via Sommarive 14\\ 
		 38123 Trento\\ 
		 Italy} 
	 \email{luigiamedeo.bianchi@unitn.it}
	
	\author{Francesco Morandin}
	\address{Francesco Morandin\\
		 Dipartimento di Scienze Matematiche, Fisiche e Informatiche\\ 
		 Università degli Studi di Parma\\ 
		 Parco Area delle Scienze 53/A\\ 
		 43124 Parma\\ 
		 Italy}
		\email{francesco.morandin@unipr.it}

	\begin{abstract}
		We discuss a stochastic interacting particles' system connected to dyadic models of turbulence, defining suitable classes of solutions and proving their existence and uniqueness. We investigate the regularity of a particular family of solutions, called moderate, and we conclude with existence and uniqueness of invariant measures associated with such moderate solutions.
	\end{abstract}
		\keywords{dyadic models \and invariant measures \and moderate solutions \and continuous-time Markov chains}
	\thanks{This research was partially supported by Gruppo Nazionale Analisi Matematica, Probabilità e loro Applicazioni (GNAMPA) of Istituto Nazionale di Alta Matematica (INdAM) through the project ``Stationary inverse cascades in shell models of turbulence''. LAB would like to thank the Hausdorff Institute for Mathematics in Bonn, where part of the research was conducted during the Junior Trimester Program ``Randomness, PDEs and Nonlinear Fluctuations''.}
	
	\maketitle	
	\section{Introduction}
	In this paper we consider a 
	stochastic system of interacting particles,
	introduced and discussed in~\cite{BrzFlaNekZeg}:
	\begin{equation}\label{e:main_model_intro}
	\begin{cases}
	dX_n=k_{n-1}X_{n-1}\circ dW_{n-1}-k_nX_{n+1}\circ dW_n
	,& n\geq1\\
	X_n(0)=\overline X_n
	,& n\geq1\\
	X_0(t)\equiv\sigma
	,& t\geq0,
	\end{cases}
	\end{equation}
	where  $k_n:=\lambda^n$, with $\lambda>1$, the $W_n$ are independent Brownian motions with $\circ\, dW$ denoting 
	Stratonovich stochastic integration,
	$\overline{X}$ is a random initial condition, and $\sigma$ is a nonnegative deterministic forcing, a term not present in~\cite{BrzFlaNekZeg}.
	
	It is closely related to dyadic models of turbulence, 
	an interesting simplification of the energy cascade phenomenon, which have been extensively studied in the physical literature, as well as the mathematical one. We mention here just some results: in~\cite{BarFlaMor2010PAMS, BarFlaMor2011AAP, BarMor2013NON, Bianchi2013} one can find dyadic models of a form similar to the one here, linearised by means of Girsanov's theorem, where~\cite{KatPav04, FriPav04, BarMorRom2011, BarBiaFlaMor2013, BiaMor2017CMP} deal with other variants of the dyadic models. We refer to the review papers~\cite{AleBif2018, BiaFla2020}, too, for further reading and references.
	
	The $\sigma$ in~\eqref{e:main_model_intro} is a deterministic forcing, a feature that~\eqref{e:main_model_intro} shares with other dyadic models, see for example~\cite{CheFriPav2007, CheFri09}. Such forcing is usually introduced to provide a steady flow of energy and allow for solutions that are stationary in time: dyadic models, even when formally energy preserving, dissipate energy, through the so-called anomalous dissipation.
	
	In some cases, one considers dyadic models with additive stochastic forcing,
	for example~\cite{AndBarColForPro2016N,FriGlaVic2016AIHPPS}, where the
	noise is 1-dimensional, and~\cite{Rom2013}, where it acts on all components.
	The model~\eqref{e:main_model_intro} considered in this paper is itself stochastic,
	through the random initial
	condition $\overline{X}$, and the infinite-dimensional multiplicative noise $(W_n)_n$, which
	formally conserves energy.  Other examples linked to this one in the
	literature are the already mentioned~\cite{BarFlaMor2010PAMS, BarFlaMor2011AAP, Bianchi2013,
		BarMor2013NON} and~\cite{BrzFlaNekZeg}.
	
	Unlike classical dyadic models, the particle system~\eqref{e:main_model_intro} considered in this paper is linear. This is not surprising: as already mentioned, stochastic linear dyadic models arise from nonlinear ones through Girsanov's theorem. Moreover, the coefficients in such models are growing exponentially, and the associated operator, though linear, is still nontrivial to deal with. Additionally, as mentioned in~\cite{BrzFlaNekZeg}, systems similar to the one discussed 
	here play a role in modelling quantum spin chains and heat conduction.
	
	The main results in this paper are the following: we define two classes of solutions for our system, proper solutions and moderate ones, a more general class. For both classes we prove existence and uniqueness, but the natural setting for the uniqueness is that of moderate solutions, a class that had already been introduced in~\cite{BrzFlaNekZeg}. On the other hand, we prove that, under very mild assumptions on the initial conditions, moderate solutions are much more regular than hinted to by the definition: in particular they have finite energy for positive times. 
	Finally, we move on to invariant measures. By focusing on the more regular solutions suggested by the regularity theorem we just mentioned, we can improve the result in~\cite{BrzFlaNekZeg}, showing that for moderate solutions there exists a unique invariant measure, with support in the space of finite energy solutions.
	
	Before moving on, let us briefly give the general structure of the paper. We begin with the definition of the model and of proper solutions in Section~\ref{sec:solutionsl2}, where we also prove the existence of such solutions. In Section~\ref{sec:moderate} it is the turn of moderate solutions, and their existence and uniqueness. After that, in Section~\ref{sec:markovchain}, we take an apparent detour, considering a related continuous-time Markov chain, which will allow us to better characterize moderate solutions and show that they are quite regular. Finally, in Section~\ref{sec:invariant}, we show existence and uniqueness of invariant measures for our system.

	\section{Model and proper solutions}
	\label{sec:solutionsl2}
	The model studied in this paper is the following linear and formally conservative system of interacting particles, introduced in~\cite{BrzFlaNekZeg}: 
	\begin{equation}\label{e:main_model}
	\begin{cases}
	dX_n=k_{n-1}X_{n-1}\circ dW_{n-1}-k_nX_{n+1}\circ dW_n
	,& n\geq1\\
	X_n(0)=\overline X_n
	,& n\geq1\\
	X_0(t)\equiv\sigma
	,& t\geq0.
	\end{cases}
	\end{equation}
	Here, for $n\geq0$, the coefficients are $k_n:=\lambda^n$, for some
	$\lambda>1$, the $W_n$ are independent Brownian motions on a given
	filtered probability space $(\Omega, \mathcal{F}, \mathcal{F}_t, P)$,
	and $\circ\, dW$ denotes the Stratonovich stochastic integration,
	$\overline{X}$ is an $\mathcal F_0$-mea\-sur\-able random initial
	condition, and $\sigma\geq0$ is a constant and deterministic forcing
	term.
	
	We can rewrite each differential equation in It\=o form:
	\begin{multline*}
	dX_n=
	k_{n-1}X_{n-1}dW_{n-1}
	-k_nX_{n+1}dW_n \\
	+\frac12d[k_{n-1}X_{n-1},W_{n-1}]
	-\frac12d[k_nX_{n+1},W_n],
	\end{multline*}
	where, by~\eqref{e:main_model}
	\[
	d[X_{n-1},W_{n-1}]
	=-k_{n-1}X_ndt
	,\  n\geq2, \ \text{and}\ 
	d[X_{n+1},W_n]
	=k_nX_ndt
	,\  n\geq1,
	\]
	and rewrite~\eqref{e:main_model} in the following way
	\begin{equation}\label{e:main_ito}
	\begin{dcases}
	dX_1=
	\sigma dW_0-k_1X_2dW_1-\frac12k_1^2X_1dt\\
	dX_n
	=k_{n-1}X_{n-1}dW_{n-1}-k_nX_{n+1}dW_n-\frac12(k_{n-1}^2+k_n^2)X_ndt
	,\qquad n\geq2\\
	X(0)=\overline X.
	\end{dcases}
	\end{equation}
	
	We consider as initial condition $\overline X$ an
	$\mathcal F_0$-measurable random variable, as mentioned, which will
	usually take values in some space $H^s$, for $s\in \mathbb{R}$, where
	\[
	H^s:=\biggl\{x\in\mathbb R^{\mathbb N}:\|x\|_{H^s}:=\Bigl(\sum_{n\geq1}k_n^{2s}x_n^2\Bigr)^{1/2}<\infty\biggr\}.
	\]
	
	\begin{remark}
		These spaces have nice properties: they are Hilbert and separable. We also have that $H^s\subseteq H^p$ for $p<s$, and $\|\cdot\|_{H^p}\leq \|\cdot\|_{H^s}$.
		Notice, moreover, that $H^0=l^2$.
                The $l^2$ norm is identified as the energy of the
                configuration and the spaces $H^s$ may be seen as
                corresponding to the usual function spaces
                (see~\cite{BiaMor2017CMP} for a thorough explanation
                in a related model). 
	\end{remark}
	
	We introduce now the definition of proper solutions which
        will be our starting point towards the more general moderate
        solutions, proposed in~\cite{BrzFlaNekZeg}, that here appear in
        Definition~\ref{d:moderate_sol}.
	
	\begin{definition}\label{d:proper_sols}
		Given a filtered probability space
		$(\Omega,\mathcal F,\mathcal F_t,P)$, an $\mathcal F_0$-mea\-sur\-able
		random variable $\overline X$, taking values in some
		$H^s$, and a sequence of independent Brownian motions
		$(W_n)_{n\geq0}$, we say that a process
		$X=(X_n(t))_{n\geq1,t\in[0,T]}$ is a \emph{componentwise solution}
		with initial condition $\overline X$, if it has adapted, continuous
		components and satisfies system~\eqref{e:main_ito}.
		
		If a componentwise solution $X$ is in $L^2([0,T]\times\Omega;l^2)$
		and $X_n\in L^4([0,T]\times\Omega)$ for all $n\geq1$, we say that
		$X$ is a \emph{proper solution}.
	\end{definition}
	
	The requirement
	of finite fourth moments, which appears in the definition of proper
	solution, is a technical assumption needed in Proposition~\ref{p:sol_2ndmom}, which shows that second moments of a proper solution solve a closed system of equations (see also~\cite{BarMor2013NON},~\cite{Bianchi2013}). Fourth moments play also a role in Theorem \ref{t:existence_l2init}.
	
	Let us now state and prove the following existence result for proper solutions with initial conditions with finite energy.
	
	\begin{theorem}\label{t:existence_l2init}
		For any initial condition
		$\overline X\in L^4(\Omega,\mathcal F_0;l^2)$, there exists at least
		a proper solution
		$X\in L^\infty([0,T];L^4(\Omega;l^2))$.  Moreover,
		\begin{equation}\label{e:ps_en_bd_avg}
		E\|X(t)\|^2_{l^2} \leq E\|\overline X\|^2_{l^2}+\sigma^2t
		\qquad t\in[0,T],
		\end{equation}
		and if $\sigma=0$, then with probability 1,
		\begin{equation}\label{e:ps_en_bd_as}
		\|X(t)\|_{l^2}\leq\|\overline X\|_{l^2}
		\qquad t\in[0,T].
		\end{equation}
	\end{theorem}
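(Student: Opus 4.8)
The plan is to construct a solution by Galerkin truncation, to derive uniform second‑ and fourth‑moment bounds that are in fact \emph{exact} because the Stratonovich form is formally conservative, and then to pass to the limit by exploiting the linearity of the system. First I would introduce the truncated system obtained by setting $X^{(N)}_n\equiv0$ for $n>N$. Converting the truncated Stratonovich equations to It\^o form, the only change with respect to \eqref{e:main_ito} occurs in the top equation: the absence of $X^{(N)}_{N+1}$ removes the term $-k_NX^{(N)}_{N+1}\,dW_N$ and leaves the drift $-\tfrac12k_{N-1}^2X^{(N)}_N$. This is a finite‑dimensional linear SDE with constant coefficients and additive forcing $\sigma\,dW_0$, so it admits a unique global strong solution $X^{(N)}$ with continuous adapted components and finite moments of all orders; in particular $X^{(N)}_n\in L^4([0,T]\times\Omega)$.

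The heart of the argument is the a priori estimate. Applying It\^o's formula to $\|X^{(N)}\|_{l^2}^2=\sum_{n=1}^N(X^{(N)}_n)^2$, the drift produced by the terms $-\tfrac12(\cdots)X_n\,dt$ cancels exactly against the quadratic‑variation contributions of the martingale parts (equivalently, the Stratonovich chain rule makes the interior couplings telescope), leaving
\begin{equation*}
\|X^{(N)}(t)\|_{l^2}^2=\|\overline X^{(N)}\|_{l^2}^2+\sigma^2t+2\sigma\int_0^tX^{(N)}_1\,dW_0.
\end{equation*}
Taking expectations gives the exact identity $E\|X^{(N)}(t)\|_{l^2}^2=E\|\overline X^{(N)}\|_{l^2}^2+\sigma^2t$, uniformly in $N$; when $\sigma=0$ the telescoping yields pathwise conservation, so $\|X^{(N)}(t)\|_{l^2}=\|\overline X^{(N)}\|_{l^2}\le\|\overline X\|_{l^2}$ almost surely. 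A second application of It\^o's formula, now to the square of the energy, combined with $(X^{(N)}_1)^2\le\|X^{(N)}\|_{l^2}^2$, gives $\tfrac{d}{dt}E\|X^{(N)}(t)\|_{l^2}^4\le 6\sigma^2\,E\|X^{(N)}(t)\|_{l^2}^2$, whence $\sup_N\sup_{t\in[0,T]}E\|X^{(N)}(t)\|_{l^2}^4<\infty$ thanks to $\overline X\in L^4(\Omega;l^2)$. Thus $(X^{(N)})_N$ is bounded in $L^\infty([0,T];L^4(\Omega;l^2))$, hence in the Hilbert space $L^2([0,T]\times\Omega;l^2)$.

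I would then extract a subsequence converging weakly in $L^2([0,T]\times\Omega;l^2)$ to some adapted $X$ (adaptedness is preserved because the adapted processes form a closed, hence weakly closed, subspace). The decisive point, and the step I expect to be the main obstacle, is passing to the limit inside the stochastic integrals with only weak convergence available. Here linearity is what saves the argument: for fixed $n$ and $N\ge n+1$ the component equation for $X^{(N)}_n$ coincides with the one in \eqref{e:main_ito}, and both the It\^o map $Y\mapsto\int_0^\cdot Y\,dW_j$ and the Lebesgue map $Y\mapsto\int_0^\cdot Y\,ds$ are bounded linear operators on $L^2([0,T]\times\Omega)$, hence weakly continuous. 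Passing to the limit in the integral form of each component equation therefore identifies $X$ with the continuous adapted process defined by the same integral relations; this process solves \eqref{e:main_ito}, lies in $L^2([0,T]\times\Omega;l^2)$ and has $L^4$ components, i.e.\ it is a proper solution in the sense of Definition~\ref{d:proper_sols}. The uniform $L^4$ bound passes to the limit as well, yielding the asserted membership $X\in L^\infty([0,T];L^4(\Omega;l^2))$.

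It remains to transfer the energy bounds to $X$. Evaluating the weakly continuous integral maps at a fixed time shows $X^{(N)}(t)\rightharpoonup X(t)$ weakly in $L^2(\Omega;l^2)$ for every $t$, so weak lower semicontinuity of the norm gives $E\|X(t)\|_{l^2}^2\le\liminf_N E\|X^{(N)}(t)\|_{l^2}^2=E\|\overline X\|_{l^2}^2+\sigma^2t$, which is \eqref{e:ps_en_bd_avg}. For the almost sure estimate \eqref{e:ps_en_bd_as} with $\sigma=0$, weak convergence is insufficient pathwise, so I would invoke Mazur's lemma: suitable convex combinations of the $X^{(N)}$ converge strongly in $L^2([0,T]\times\Omega;l^2)$ and hence, along a further subsequence, pointwise in $(t,\omega)$. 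Since each approximation satisfies $\|X^{(N)}(t,\omega)\|_{l^2}\le\|\overline X(\omega)\|_{l^2}$ and the $l^2$ norm is convex, every such convex combination obeys the same pathwise bound; letting the combinations converge and using continuity of the components in $t$ yields \eqref{e:ps_en_bd_as}.
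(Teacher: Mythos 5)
Your proposal is correct and follows essentially the same route as the paper: Galerkin truncation, the exact pathwise energy identity giving uniform second and fourth moment bounds, weak compactness of the approximants, weak continuity of the linear integral maps to pass to the limit in the component equations, and lower semicontinuity to transfer the two energy bounds. The only deviations are cosmetic and standard --- It\^o's formula on the squared energy rather than squaring the identity and using the isometry for the fourth moment, extraction of a weak limit in $L^2([0,T]\times\Omega;l^2)$ plus Mazur's lemma for the pathwise bound instead of weak* compactness in $L^\infty([0,T];L^4(\Omega;l^2))$ and integration over the exceptional set --- though the assertion that the uniform $L^4$ bound ``passes to the limit'' deserves the extra line the paper spends on it (fixed-time weak convergence plus weak lower semicontinuity of the $L^4(\Omega;l^2)$ norm).
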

	
	\begin{proof}
		For $N\geq3$, consider the following SDE in $\mathbb R^N$, which
		represents a Galerkin approximation of the original
		problem~\eqref{e:main_ito}:
		\begin{equation}\label{e:galerkin_sys}
		\begin{dcases}
		dX^{(N)}_1=
		\sigma dW_0-k_1X^{(N)}_2dW_1-\frac12k_1^2X^{(N)}_1dt\\
		dX^{(N)}_n
		=k_{n-1}X^{(N)}_{n-1}dW_{n-1}-k_nX^{(N)}_{n+1}dW_n-\frac12(k_{n-1}^2+k_n^2)X^{(N)}_ndt
		,\quad n=2,\dots,N-1\\
		dX^{(N)}_N
		=k_{N-1}X^{(N)}_{N-1}dW_{N-1}-\frac12k_{N-1}^2X^{(N)}_Ndt\\
		X_n^{(N)}(0)=\overline X_n
		,\quad n=1,\dots,N.
		\end{dcases}
		\end{equation}
		This system has a strong solution with finite fourth moments which we consider embedded in $l^2$ for simplicity. We compute
		\[
		d\|X^{(N)}\|^2_{l^2}
		=\sum_{n\geq1}d\big((X_n^{(N)})^2\big)
		=2\sum_{n\geq1}X_n^{(N)}dX_n^{(N)}+\sum_{n\geq1}d[X_n^{(N)}].
		\]
		Now (dropping the index $^{(N)}$ in the next two equations not to burden the notation too much),
		\[
		2\sum_{n\geq1}X_ndX_n
		=2\sigma X_1dW_0-k_1^2X_1^2dt-\sum_{n=2}^{N-1}(k_{n-1}^2+k_n^2)X_n^2dt-k_{N-1}^2X_N^2dt,
		\]
		and
		\[
		\sum_{n\geq1}d[X_n]
		=\sigma^2dt+k_1^2X_2^2dt+\sum_{n=2}^{N-1}(k_{n-1}^2X_{n-1}^2+k_n^2X_{n+1}^2)dt+k_{N-1}^2X_{N-1}^2dt,
		\]
		hence
		\[
		d\|X^{(N)}\|^2_{l^2}
		=2\sigma X_1^{(N)}dW_0+\sigma^2dt,
		\]
		yielding
		\begin{equation}\label{e:l2_isometry_galerkin}
		\|X^{(N)}(t)\|^2_{l^2}
		=\|X^{(N)}(0)\|^2_{l^2}+2\sigma\int_0^tX_1^{(N)}(s)\mathrm{d}W_0(s)+\sigma^2t
		,\qquad\text{a.s.}
		\end{equation}
		From here we can bound the second moment (with respect to $\Omega$) of
		this $l^2$ norm,
		\begin{equation}\label{e:2nd_mom_bd}
		E\|X^{(N)}(t)\|^2_{l^2}
		\leq E\|\overline X\|^2_{l^2}+\sigma^2t,
		\end{equation}
		and hence by~\eqref{e:l2_isometry_galerkin} again and by It\=o
		isometry, we can also bound the fourth moment,
		\begin{align*}
		E\|X^{(N)}(t)\|^4_{l^2}
		&\leq 3E\|\overline X\|^4_{l^2}+12\sigma^2\int_0^tE\bigl[\bigl(X_1^{(N)}(s)\bigr)^2\bigr]\mathrm{d}s+3\sigma^4t^2\\
		&\leq 3E\|\overline X\|^4_{l^2}+12\sigma^2\int_0^t\bigl(E\|\overline
		X\|^2_{l^2}+\sigma^2s\bigr)\mathrm{d}s+3\sigma^4t^2\\
		&= 3\|\overline X\|^4_{L^4(\Omega;l^2)}+ 12\sigma^2T\|\overline
		X\|^2_{L^2(\Omega;l^2)}+9\sigma^4T^2=:L,
		\end{align*}
		for all $t\in[0,T]$, and $N\geq3$, which we can also write as
		\begin{equation}\label{e:galerkin_bound}
		\|X^{(N)}\|_{L^\infty([0,T];L^4(\Omega;l^2))}\leq L^{1/4}
		,\qquad N\geq3.
		\end{equation}
		
		Consequently the sequence $X^{(N)}$ is bounded in
		$L^\infty([0,T];L^4(\Omega;l^2))$, which is the dual of the space
		$L^1([0,T];L^{4/3}(\Omega;l^2))$. Since the latter is separable (see for example~\cite{HytNeeVerWei2016} for details),
		sequential Banach-Alaoglu theorem applies and there is a
		subsequence $X^{(N_k)}$ which converges in the weak*
		topology to some limit $X^*$ for $k\to\infty$. 
		A fortiori, there is also weak
		convergence in $L^p([0,T]\times\Omega;l^2)$, for all $1<p\leq4$, 
		and in particular for $p=2$.
		
		The components $X^{(N)}_n$ for $n\geq1$ belong to
		$L^2([0,T]\times\Omega;\mathbb R)$ and are progressively
		measurable. The subset of progressively measurable processes is a
		linear subspace of $L^2$ which is complete, hence closed in the strong
		topology. Thus it is closed also in the weak topology. Since
		$X^{(N_k)}_n$ converges to $X^*_n$ in the weak topology of
		$L^2([0,T]\times\Omega;\mathbb R)$, we conclude that $X^*_n$ is
		progressively measurable.
		
		Now we need to pass to the limit
		in~\eqref{e:main_ito}. By~\eqref{e:galerkin_sys} the processes
		$X^{(N)}_n$ satisfy
		\begin{multline*}
		X^{(N)}_n(t)-X^{(N)}_n(0)
		=k_{n-1}\int_0^tX^{(N)}_{n-1}(s)\mathrm{d}W_{n-1}(s)-k_n\int_0^tX^{(N)}_{n+1}(s)\mathrm{d}W_n(s)\\
		-\frac12(k_{n-1}^2+k_n^2)\int_0^tX^{(N)}_n(s)\mathrm{d}s,
		\end{multline*}
		for $N>n$. The maps
		\[
		V\mapsto\int_0^tV(s)\mathrm{d}W_n(s)
		\qquad\text{and}\qquad
		V\mapsto\int_0^tV(s)\mathrm{d}s
		\]
		are linear and (strongly) continuous operators from
		$L^2([0,T]\times\Omega)$ to $L^2(\Omega)$, hence they are weakly
		continuous so we can pass to the limit (see
		Remark~\ref{r:limit_inside_integrals}, below) and conclude that the
		processes $X^*_n$ also satisfy system~\eqref{e:main_ito}.
		
		A posteriori, from these integral equations, it follows that there is
		a modification $X$ of $X^*$ such that all its components are continuous,
		hence $X$ is a componentwise solution in $L^2([0,T]\times\Omega;l^2)$.
		
		To conclude that $X$ is a proper solution, we only need to check that
		the components are in $L^4$. For all $n\geq1$,
		\[
		\|X_n\|_{L^4([0,T]\times\Omega)}^4
		\leq\|X^*\|_{L^4([0,T]\times\Omega;l^2)}^4
		\leq\liminf_{k\to\infty}\|X^{(N_k)}\|_{L^4([0,T]\times\Omega;l^2)}^4
		\leq LT,
		\]
		where the third inequality is a consequence of
		bound~\eqref{e:galerkin_bound} and the second one of the weak lower
		semicontinuity of the norm, i.e.~that in a Banach space, if a sequence
		converges weakly, then the norm of the limit is bounded by the limit inferior
		of the norms.
		
		Then, to prove the bound on energy~\eqref{e:ps_en_bd_avg}, we take a
		measurable set $D\subset[0,T]$, integrate~\eqref{e:2nd_mom_bd} on $D$
		and pass to the limit with the weak lower semicontinuity of the
		$L^2(D\times\Omega;l^2)$ norm, to get
		\[
		\int_D E\|X(t)\|_{l^2}^2 \mathrm{d}t
		\leq\liminf_k\int_D E\|X^{(N_k)}(t)\|_{l^2}^2 \mathrm{d}t
		\leq\int_D \bigl(E\|\overline X\|_{l^2}^2+\sigma^2t\bigr) \mathrm{d}t.
		\]
		By the arbitrariness of $D$, the bound~\eqref{e:ps_en_bd_avg} must hold
		for a.e.~$t$. Now, if it still failed for some $t_0$, then one could
		find $\epsilon>0$ and an integer $m$ such that
		$E\sum_{n\leq m}X_n(t_0)^2-\epsilon$ would also exceed the bound, but
		by the continuity of the trajectories and the finiteness of the sum
		this would give a contradiction.
		
		Finally, to prove the last statement, we follow ideas
		from~\cite{BarMor2013NON}. If $\sigma=0$, by~\eqref{e:l2_isometry_galerkin}, we have
		\begin{equation*}
		\|X^{(N)}\|_{l^2}\leq\|\overline X\|_{l^2}
		\qquad P\text{-a.s.~on all $[0,T]$ and for all $N$.}
		\end{equation*}
		we now integrate the square of this inequality on
		$A:=\{\|X\|_{l^2}>\|\overline X\|_{l^2}\}\subset[0,T]\times\Omega$ and
		pass to the limit with the weak lower semicontinuity of the
		$L^2(A;l^2)$ norm, to get that $A$ must be
		$\mathcal L\otimes P$-negligible. Then for all $m\geq1$ also
		$\{(t,\omega):\sum_{n\leq m}X_n(t)^2>\|\overline X\|_{l^2}^2\}$ is
		negligible, and hence by continuity of trajectories,
		\[
		P\Bigl(\sup_t\sum_{n\leq m}X_n(t)^2
		\leq\|\overline X\|_{l^2}^2\Bigr)
		=1,
		\]
		and we can conclude by intersecting over all $m$. 
	\end{proof}
	
	\begin{remark}
		\label{r:limit_inside_integrals}
		Passing to the limit in the integral equations is standard but made
		somewhat tricky by the different spaces involved, so we expand
		it here for sake of completeness. First of all, we fix $n\geq1$.  We start now from the
		fact that $X_n^{(N)}\to X_n^*$ in weak-$L^2([0,T]\times\Omega)$. For
		$t\in[0,T]$, let $L_{n,t}$ from $L^2([0,T]\times\Omega)$ to $L^2(\Omega)$ be
		formally defined by
		\begin{multline*}
		L_{n,t}(x)
		:=k_{n-1}\int_0^tx_{n-1}(s)\mathrm{d}W_{n-1}(s)-k_n\int_0^tx_{n+1}(s)\mathrm{d}W_n(s)\\-\frac12(k_{n-1}^2+k_n^2)\int_0^tx_n(s)\mathrm{d}s.
		\end{multline*}
		Since the integral operators are weakly continuous,
		$L_{n,t}(X^{(N)})\to L_{n,t}(X^*)$ in weak-$L^2(\Omega)$, for all
		$t\in[0,T]$. On the other hand $X_n^{(N)}(0)\to X^*_n(0)$ a.s.~since by
		construction it is eventually constant. Therefore
		$X_n^{(N)}(t)\to X^*_n(0)+L_{n,t}(X^*)=:Z_t$ in weak-$L^2(\Omega)$,
		for all $t$. It is now enough to strengthen the convergence to
		weak-$L^2([0,T]\times\Omega)$ to conclude that $X_n^*=Z$ and hence
		that it solves the integral equations.
		
		To this end, take any $Y\in L^2([0,T]\times\Omega)$. For a.e.~$t$ we
		have that $Y_t\in L^2(\Omega)$, so by weak convergence
		\[
		g^{(N)}(t):=E[Y_tX_n^{(N)}(t)]\to E[Y_tZ_t]=:h(t).
		\]
		By Cauchy-Schwarz inequality and the uniform bound given
		by~\eqref{e:2nd_mom_bd} we get
		\[
		|g^{(N)}(t)|
		\leq\|Y_t\|_{L^2}\cdot\|X_n^{(N)}(t)\|_{L^2}
		\leq C\|Y_t\|_{L^2}\in L^2(0,T),
		\]
		so by dominated convergence
		\[
		\int_0^TE[Y_tX_n^{(N)}(t)]\mathrm{d}t\to\int_0^TE[Y_tZ_t]\mathrm{d}t,
		\]
		and we are done.
	\end{remark}
	
	We now take a first look at the second moments of a proper solution: they solve a linear system. We will see later on that such property can be used to get useful estimates on the solutions themselves.
	
	\begin{proposition}\label{p:sol_2ndmom}
		Let $X$ be a proper solution with initial condition $\overline
		X$ such that for all $n\geq 1$ $\overline{X}_n\in L^2(\Omega)$. 
		For all $n\geq1$ and $t\in[0,T]$, let $u_n(t):=E[X_n(t)^2]$ and
		$\overline u_n:=E[\overline X_n^2]$.
		
		Then $u\in L^1\bigl([0,T];l^1(\mathbb R_+)\bigr)$ and it satisfies the following linear
		system
		\begin{equation}\label{eq:ode2ndmom}
		\begin{cases}
		u_1'=\sigma^2-k_1^2u_1+k_1^2u_2\\
		u_n'=k_{n-1}^2u_{n-1}-(k_{n-1}^2+k_n^2)u_n+k_n^2u_{n+1}	,\qquad n\geq2,
		\end{cases}
		\end{equation}
		with initial condition $\overline u$.
	\end{proposition}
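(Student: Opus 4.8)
The plan is to obtain the system~\eqref{eq:ode2ndmom} by applying Itô's formula to each $X_n^2$ and then taking expectations; the only real work is controlling the stochastic integrals that arise. Before that, the membership $u\in L^1\bigl([0,T];l^1(\mathbb R_+)\bigr)$ is immediate from the definition of proper solution: since every $u_n\geq0$, Tonelli gives $\|u(t)\|_{l^1}=\sum_{n\geq1}E[X_n(t)^2]=E\|X(t)\|_{l^2}^2$, so that $\int_0^T\|u(t)\|_{l^1}\,dt=\|X\|_{L^2([0,T]\times\Omega;l^2)}^2<\infty$. In particular each $u_n$ belongs to $L^1([0,T])$, a fact I will use below.

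Fixing $n\geq2$ and writing $d(X_n^2)=2X_n\,dX_n+d[X_n]$ from~\eqref{e:main_ito}, the drift of $2X_n\,dX_n$ is $-(k_{n-1}^2+k_n^2)X_n^2\,dt$, while the two martingale terms are driven by the independent Brownian motions $W_{n-1}$ and $W_n$, so their cross-variation vanishes and $d[X_n]=(k_{n-1}^2X_{n-1}^2+k_n^2X_{n+1}^2)\,dt$. Collecting terms,
\[
d(X_n^2)=\bigl(k_{n-1}^2X_{n-1}^2-(k_{n-1}^2+k_n^2)X_n^2+k_n^2X_{n+1}^2\bigr)dt+dM_n,
\]
where $M_n=2k_{n-1}\!\int_0^{\cdot}\!X_nX_{n-1}\,dW_{n-1}-2k_n\!\int_0^{\cdot}\!X_nX_{n+1}\,dW_n$. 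The case $n=1$ follows from the same computation with $X_0\equiv\sigma$ and $k_0=1$, so that the incoming-energy term $k_0^2X_0^2$ reduces to the constant $\sigma^2$.

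The main obstacle, and the precise reason the definition of proper solution demands finite fourth moments, is to check that each $M_n$ is a genuine martingale, so that $E[M_n(t)]=0$. It suffices that the integrands lie in $L^2([0,T]\times\Omega)$; by the Cauchy--Schwarz inequality, first in $\Omega$ and then in time,
\[
\int_0^TE[X_n^2X_{n-1}^2]\,ds\leq\|X_n\|_{L^4([0,T]\times\Omega)}^2\,\|X_{n-1}\|_{L^4([0,T]\times\Omega)}^2<\infty,
\]
which is finite exactly because $X$ is a proper solution, and the $X_nX_{n+1}$ integrand (as well as the $\sigma X_1$ integrand for $n=1$) is handled the same way.

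It remains to pass to expectations. Integrating the displayed identity, the martingale term drops out once we take $E$, and since each $u_j\in L^1([0,T])$ Fubini allows exchanging expectation with the time integral, giving
\[
u_n(t)=\overline u_n+\int_0^t\bigl(k_{n-1}^2u_{n-1}(s)-(k_{n-1}^2+k_n^2)u_n(s)+k_n^2u_{n+1}(s)\bigr)ds
\]
for $n\geq2$, and the corresponding equation with an added $\sigma^2 t$ for $n=1$. Each $u_n$ is thus absolutely continuous with the stated initial value $\overline u_n$, and differentiating recovers~\eqref{eq:ode2ndmom} for almost every $t$.
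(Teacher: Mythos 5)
Your proof is correct and follows essentially the same route as the paper's: It\^o's formula applied to $X_n^2$, the $L^4$ hypothesis (via Cauchy--Schwarz) to guarantee the stochastic integrals are true martingales, and the $L^2([0,T]\times\Omega;l^2)$ membership to get $u\in L^1([0,T];l^1)$; you merely spell out the Fubini and integral-form steps that the paper leaves implicit. One small caution: for $n=1$ the drift of $X_1$ in~\eqref{e:main_ito} is $-\tfrac12k_1^2X_1$ and not $-\tfrac12(k_0^2+k_1^2)X_1$, because $X_0$ is constant and contributes no Stratonovich correction, so the shorthand ``same computation with $k_0=1$'' must be applied only to the quadratic-variation term $k_0^2X_0^2=\sigma^2$ --- as the final system you state indeed reflects.
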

	
	\begin{proof}
		It follows from~\eqref{e:main_ito}, by applying It\=o formula to
		$X_n^2$, that
		
		\[
		d(X_n^2)
		=2X_ndX_n+d[X_n],
		\]
		where
		\[
		d[X_n]
		=k_{n-1}^2X_{n-1}^2dt+k_n^2X_{n+1}^2dt
		,\qquad n\geq1,
		\]
		hence
		\[
		d(X_1^2)
		=2\sigma X_1dW_0-2k_1X_1X_2dW_1-k_1^2X_1^2dt+\sigma^2dt+k_1^2X_2^2dt,
		\]
		and
		\begin{align*}
		d(X_n^2)
		=&2k_{n-1}X_{n-1}X_ndW_{n-1}-2k_nX_nX_{n+1}dW_n\\
		&-(k_{n-1}^2+k_n^2)X_n^2dt+k_{n-1}^2X_{n-1}^2dt+k_n^2X_{n+1}^2dt
		,\qquad n\geq2.
		\end{align*}
		By the definition of proper solution, $X_n\in L^4([0,T]\times\Omega)$
		for all $n$, so the stochastic integrals above are true martingales,
		and taking expectations and differentiating, we get
		\[
		\begin{dcases}
		\frac d{dt}E[X_1^2]
		=\sigma^2-k_1^2E[X_1^2]+k_1^2E[X_2^2]\\
		\frac d{dt}E[X_n^2]
		=-(k_{n-1}^2+k_n^2)E[X_n^2]+k_{n-1}^2E[X_{n-1}^2]+k_n^2E[X_{n+1}^2]
		,\qquad n\geq2.
		\end{dcases}
		\]
		In other words, the second moments of the components satisfy
		system~\eqref{eq:ode2ndmom}.
		
		The fact that $u(0)=\overline u$ is obvious and $u\in L^1([0,T];l^1)$
		follows from the definition of proper solution, since
		$X\in L^2([0,T]\times\Omega;l^2)$. 
	\end{proof}
	
	There is furthermore a unique constant solution to system~\eqref{eq:ode2ndmom} satisfied by the second moments of proper solutions. This constant solution has an explicit form, as shown in the following result.
	
	\begin{proposition}\label{prop:uniq_stationary_2mom}
          System \eqref{eq:ode2ndmom} has a unique constant solution
          $u(t)\equiv s$ in $l^1(\mathbb R)$, with explicit form
          $s_n=\sigma^2\lambda^{-2n}(1-\lambda^{-2})^{-1}$ for
          $n\geq1$.
          In particular, $s\in H^\beta$ for all $\beta<1$.
	\end{proposition}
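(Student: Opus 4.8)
The plan is to exploit that a constant solution $u(t)\equiv s$ has all time-derivatives vanishing, so that system~\eqref{eq:ode2ndmom} collapses to a purely algebraic three-term recurrence for $s=(s_n)_{n\ge1}$:
\begin{equation*}
\sigma^2 - k_1^2 s_1 + k_1^2 s_2 = 0, \qquad k_{n-1}^2 s_{n-1} - (k_{n-1}^2+k_n^2)s_n + k_n^2 s_{n+1} = 0 \quad (n\ge2).
\end{equation*}
First I would rewrite the relations for $n\ge2$ in \emph{flux form}: setting $J_n := k_n^2(s_{n+1}-s_n)$, a one-line regrouping shows that the $n$-th equation is exactly $J_n = J_{n-1}$, so $J_n$ is independent of $n$, with common value $J_\star$. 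The $n=1$ equation reads $\sigma^2 + J_1 = 0$, forcing $J_\star = -\sigma^2$. Hence every constant solution satisfies the first-order recurrence $s_{n+1}-s_n = -\sigma^2 k_n^{-2} = -\sigma^2\lambda^{-2n}$.

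The second step is to solve this telescoping recurrence and to let the $\ell^1$ requirement select a unique member. Summing the increments yields the one-parameter family $s_n = s_\infty + \sigma^2\lambda^{-2n}(1-\lambda^{-2})^{-1}$, where $s_\infty := \lim_{n\to\infty} s_n$ is the only remaining free parameter. Membership in $\ell^1(\mathbb R)$ forces $s_n\to0$, hence $s_\infty=0$, which pins down $s_n = \sigma^2\lambda^{-2n}(1-\lambda^{-2})^{-1}$, exactly the claimed formula. This decay-driven selection is the crux of the argument, and I expect it to be the only delicate point: the recurrence together with the boundary relation at $n=1$ leaves precisely the one-parameter family above, and since any solution with $s_\infty\ne0$ converges to a nonzero limit and is therefore not summable, the $\ell^1$ constraint singles out a unique constant solution.

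Finally I would verify the regularity claims by direct summation of geometric series. For membership in $\ell^1$ one checks $\sum_{n\ge1}|s_n| = \sigma^2(1-\lambda^{-2})^{-1}\sum_{n\ge1}\lambda^{-2n}<\infty$. For the $H^\beta$ statement, using the definition of the norm,
\begin{equation*}
\|s\|_{H^\beta}^2 = \sum_{n\ge1} k_n^{2\beta} s_n^2 = \frac{\sigma^4}{(1-\lambda^{-2})^2}\sum_{n\ge1}\lambda^{(2\beta-4)n},
\end{equation*}
a geometric series of ratio $\lambda^{2\beta-4}$, convergent whenever $2\beta-4<0$, i.e.\ for every $\beta<2$; in particular $s\in H^\beta$ for all $\beta<1$, as stated. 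No serious obstacle arises in this last step, the whole substance of the proposition residing in the flux-plus-decay uniqueness argument carried out above.
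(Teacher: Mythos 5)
Your proof is correct and follows essentially the same route as the paper: both reduce the stationary system to the statement that the differences $s_n-s_{n+1}$ form the geometric sequence $\sigma^2\lambda^{-2n}$ (your constant-flux observation is just a repackaging of the paper's recursion on consecutive differences), then use the $\ell^1$ decay $s_m\to0$ to fix the additive constant and verify the formula by summing geometric series. Your remark that the series actually converges for all $\beta<2$ is a harmless strengthening of the stated $\beta<1$.
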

	\begin{proof}
		Assume $s=(s_n)_n$ is such a solution. Then
		\begin{equation*}
		\begin{cases}
		\sigma^2-k_1^2s_1+k_1^2s_2 = 0\\
		k_{n-1}^2s_{n-1}-(k_{n-1}^2+k_n^2)s_n+k_n^2s_{n+1} =0
		,\qquad n\geq2.
		\end{cases}
		\end{equation*}
		
		We want to write a recursion for the differences of consecutive elements: we have $s_1-s_2 = \sigma^2 \cdot k_1^{-2}$, and also
		\[
                  s_n-s_{n+1} = k_{n-1}^2\cdot k_n^{-2} (s_{n-1}-s_n)
                  ,\qquad n\geq2.
		\]
		Recall now, that $k_n=\lambda^n = k_1^n$, so $s_n-s_{n+1} = \lambda^{-2} (s_{n-1}-s_n)$, and by recursion
		$s_n-s_{n+1} = \lambda^{-2n}\sigma^2$ for all $n\geq
                1$, yielding that for any $m>1$,
		\[
		s_1-s_m = \sum_{n=1}^m(s_n-s_{n+1}) = \sigma^2 \sum_{n=1}^m \lambda^{-2n} = \sigma^2 \frac{\lambda^{-2}-\lambda^{-2m}}{1-\lambda^{-2}}.
		\]
		In order to have the explicit form of this solution,
                we use now the fact that $s\in l^1$ and so $s_m\to0$:
		\[
		s_1=\sigma^2\frac{\lambda^{-2}-\lambda^{-2m}}{1-\lambda^{-2}}+s_m = \sigma^2\frac{\lambda^{-2}}{1-\lambda^{-2}},
		\]
		and for any $n$, we get
                $s_n=\sigma^2\lambda^{-2n}(1-\lambda^{-2})^{-1}$. It
                is immediate to verify that this is in fact a solution
                in $l^1$.
	\end{proof}
	
	\begin{remark}
          We will prove uniqueness of the solutions of this system in
          Theorem~\ref{t:uniqueness_linear}, in Section~\ref{sec:markovchain}.
           Then Proposition \ref{prop:uniq_stationary_2mom}
          will tell us that in system~\eqref{e:main_ito}, if the
          initial condition is chosen with second moments of the form
          just shown, then proper solutions have higher regularity
          than their definition requires, living in
          $L^\infty([0,T];L^2(\Omega; H^{1^-})$, and their components
          have constant second moments. This suggests the existence of
          invariant measures supported on configurations with
          $H^{1^-}$ regularity, which in fact will be found in Section~\ref{sec:invariant},
          at the end of the paper.
	\end{remark}
	
	\section{Moderate solutions}
	\label{sec:moderate}
	The definition of proper solution given in
        Definition~\ref{d:proper_sols} is in some sense too strong, in
        particular for the assumptions on $\overline X$ in
        Theorem~\ref{t:existence_l2init}, so we would like to consider
        a more general class of solutions. Consequently, we present
        here the concept of \emph{moderate solutions}, as introduced
        in~\cite{BrzFlaNekZeg} to identify a natural space to prove
        existence and uniqueness in, with much weaker requirements on
        initial conditions.  Later, in
        Theorem~\ref{thm:regularity_moderate}, we will show that
        moderate solutions are actually almost as regular as proper
        solutions.
	
	Some of the following results are similar to those in~\cite{BrzFlaNekZeg}, but we include full
	proofs here nevertheless, given that some details differ, that we have an additional forcing term, 
	and for overall completeness.
	
	\begin{definition}\label{d:moderate_sol}
		We say that a 
		componentwise solution
		$X$ is a \emph{moderate solution} with initial
		condition $\overline X$ if: $\overline X\in L^2(\Omega;H^{-1})$,
		$X\in L^2([0,T]\times\Omega;H^{-1})$ and there exists a sequence
		$(X^{(N)})_{N\geq1}$ of proper solutions converging to $X$ in
		$L^2([0,T]\times\Omega;H^{-1})$ as $N\to\infty$, such that their
		initial conditions $\overline X^{(N)}$ converge to $\overline X$ in
		$L^2(\Omega;H^{-1})$.
		
		If a moderate solution is in $L^2([0,T]\times \Omega; l^2)$, we call it a \emph{finite energy (moderate) solution}.
	\end{definition}
	
	\begin{remark}
		Clearly all proper solutions with initial conditions in $L^2(\Omega, H^{-1})$ 
		are finite energy solutions, as can be seen by taking the constant sequence.
	\end{remark}
	
	The key result to prove existence and uniqueness of the moderate
	solution is the following lemma, which has a statement similar to
	Lemma~2.7 from~\cite{BrzFlaNekZeg} and shares the same proof strategy.
	
	\begin{lemma}\label{l:H-1_contraction}	
		If a process $X\in L^2([0,T]\times\Omega;l^2)$ has second moments
		$u_n(t):=E[X_n(t)^2]$ which satisfy system~\eqref{eq:ode2ndmom}
		with $\sigma=0$, then
		\[
		E\|X(t)\|_{H^{-1}}^2
		\leq E\|X(0)\|_{H^{-1}}^2
		,\qquad\text{for all }t\in[0,T].
		\]
	\end{lemma}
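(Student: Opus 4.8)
The plan is to write the $H^{-1}$ energy as the weighted series
\[
V(t):=E\|X(t)\|_{H^{-1}}^2=\sum_{n\geq1}k_n^{-2}u_n(t),
\]
and to show $V$ is nonincreasing by exploiting the conservative tridiagonal structure of system~\eqref{eq:ode2ndmom}. The first step is to recast the system in flux form: setting $J_n:=k_n^2(u_n-u_{n+1})$ for $n\geq1$ and $J_0:=0$, each equation reads $u_n'=J_{n-1}-J_n$. This is exactly where the assumption $\sigma=0$ enters, since the $n=1$ equation becomes $u_1'=-k_1^2(u_1-u_2)=-J_1=J_0-J_1$. Written this way, the dynamics is a discrete continuity equation, so that pairing against the weights $k_n^{-2}$ ought to telescope.

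Next I would work with the truncations $V_M(t):=\sum_{n=1}^M k_n^{-2}u_n(t)$, which are genuine finite sums and hence differentiable, with $V_M'=\sum_{n=1}^M k_n^{-2}(J_{n-1}-J_n)$. A summation by parts, using $k_{n+1}^{-2}-k_n^{-2}=k_n^{-2}(\lambda^{-2}-1)$ together with the identity $(k_{n+1}^{-2}-k_n^{-2})J_n=(\lambda^{-2}-1)(u_n-u_{n+1})$, collapses all interior terms and leaves only boundary contributions at the two ends:
\[
V_M'(t)=(\lambda^{-2}-1)u_1(t)-\lambda^{-2}u_M(t)+u_{M+1}(t).
\]
Integrating this identity on $[0,t]$ and then letting $M\to\infty$ is the heart of the argument.

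The limit passage is the step requiring care and is the main obstacle. On the left, $V_M(t)\uparrow V(t)$ by monotone convergence, since all $u_n\geq0$. For the spurious boundary terms I would invoke the hypothesis $X\in L^2([0,T]\times\Omega;l^2)$, which yields $u\in L^1([0,T];l^1)$ and hence $\sum_n\int_0^T u_n(s)\,ds<\infty$; being the tail of a convergent series, both $\int_0^t u_M(s)\,ds$ and $\int_0^t u_{M+1}(s)\,ds$ tend to $0$ as $M\to\infty$, for every $t\in[0,T]$. This produces the clean identity
\[
V(t)=V(0)+(\lambda^{-2}-1)\int_0^t u_1(s)\,ds,
\]
which in particular shows $V(t)<\infty$.

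Since $\lambda>1$ forces $\lambda^{-2}-1<0$ and $u_1(s)=E[X_1(s)^2]\geq0$, the integral term is nonpositive, and the asserted bound $E\|X(t)\|_{H^{-1}}^2=V(t)\leq V(0)=E\|X(0)\|_{H^{-1}}^2$ follows immediately. The only delicate points, both controlled by the $L^1([0,T];l^1)$ integrability, are that the $u_n$ be (absolutely continuous) solutions of the ODE so that the fundamental theorem of calculus applies to each finite sum $V_M$, and the interchange of the $M$-limit with the time integral that kills the end terms.
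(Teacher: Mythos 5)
Your proof is correct and follows essentially the same route as the paper's: differentiate the truncated weighted sum, telescope to the identity $V_M'=(\lambda^{-2}-1)u_1-\lambda^{-2}u_M+u_{M+1}$, integrate, and use $u\in L^1([0,T];l^1)$ (from $X\in L^2([0,T]\times\Omega;l^2)$) to kill the tail terms as $M\to\infty$. The only cosmetic differences are your flux-form bookkeeping and that you keep the exact identity $V(t)=V(0)+(\lambda^{-2}-1)\int_0^t u_1$ where the paper discards the nonpositive terms before integrating.
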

	
	\begin{proof}
		Recall that, by definition, for all $t\geq0$,
		$E\|X(t)\|_{H^{-1}}^2=\sum_{n\geq1}k_n^{-2}u_n(t)$. For $N\geq1$,
		if we sum up to $N$ and differentiate, we have
		\begin{equation*}
		\begin{split}
		\frac d{dt}\sum_{n=1}^Nk_n^{-2n}u_n
		& =\sum_{n=1}^Nk_n^{-2}u_n'\\
		& = k_1^{-2}(-k_1^2u_1+k_1^2u_2) \\
		&\phantom{=}+ \sum_{n=2}^Nk_n^{-2}\big(k_{n-1}^2u_{n-1}-(k_{n-1}^2+k_n^2)u_n+k_n^2u_{n+1}\big)\\
		& = -u_1+u_2+\sum_{n=2}^N\big(\lambda^{-2}u_{n-1}-(\lambda^{-2}+1)u_n+u_{n+1}\big)\\
		& = -(1-\lambda^{-2})u_1-\lambda^{-2}u_N+u_{N+1}
		\leq u_{N+1},
		\end{split}
		\end{equation*}
		hence
		\[
		\sum_{n=1}^Nk_n^{-2n}u_n(t)
		\leq\sum_{n=1}^Nk_n^{-2n}u_n(0)+\int_0^tu_{N+1}(s)\mathrm{d}s.
		\]
		Passing to the limit as $N\to\infty$, the integral converges to zero,
		since $X\in L^2([0,T]\times\Omega;l^2)$, concluding the proof. 
	\end{proof}
	
	We can now prove the uniqueness result for moderate solutions.
	
	\begin{theorem}\label{t:uniqueness_moderate}
		If $X$ and $\widetilde X$ are two moderate solutions with the same
		initial condition $\overline X\in L^2(\Omega, H^{-1})$, defined on
		$[0,T]$ and $[0,\widetilde T]$, with $T\leq\widetilde T$, then
		$X=\widetilde X$ in $[0,T]$ almost surely.
	\end{theorem}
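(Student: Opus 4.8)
The plan is to exploit the linearity of the system at the level of the approximating proper solutions, rather than arguing directly with $X$ and $\widetilde X$. First I would reduce to a common time interval: since $T\leq\widetilde T$, the restriction of $\widetilde X$ to $[0,T]$ is again a moderate solution there, because one may restrict its approximating proper solutions (the $L^2$, $L^4$ and continuity conditions survive restriction) and convergence in $L^2([0,\widetilde T]\times\Omega;H^{-1})$ implies convergence in $L^2([0,T]\times\Omega;H^{-1})$. Hence both $X$ and $\widetilde X$ may be regarded as moderate solutions on $[0,T]$ with the same initial datum $\overline X$.

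Next, pick approximating sequences of proper solutions $X^{(N)}\to X$ and $\widetilde X^{(N)}\to\widetilde X$ in $L^2([0,T]\times\Omega;H^{-1})$, with initial data $\overline X^{(N)}\to\overline X$ and $\overline{\widetilde X}^{(N)}\to\overline X$ in $L^2(\Omega;H^{-1})$. The key observation is that, since every proper solution satisfies~\eqref{e:main_ito} with the same forcing $\sigma$, the difference $Z^{(N)}:=X^{(N)}-\widetilde X^{(N)}$ satisfies the same system with the forcing cancelled, i.e.\ with $\sigma=0$; and being a difference of two elements of $L^2([0,T]\times\Omega;l^2)$ with $L^4$ components, $Z^{(N)}$ is itself a proper solution (for $\sigma=0$). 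This is exactly the point at which linearity saves the argument.

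Then I would run the two earlier results on $Z^{(N)}$. By Proposition~\ref{p:sol_2ndmom} its second moments $E[(Z^{(N)}_n)^2]$ solve system~\eqref{eq:ode2ndmom} with $\sigma=0$ (the componentwise $L^2$ hypothesis on the initial data follows from $\overline X^{(N)},\overline{\widetilde X}^{(N)}\in L^2(\Omega;H^{-1})$), so Lemma~\ref{l:H-1_contraction} applies and yields, for every $t\in[0,T]$,
\[
E\|Z^{(N)}(t)\|_{H^{-1}}^2\leq E\|Z^{(N)}(0)\|_{H^{-1}}^2=E\|\overline X^{(N)}-\overline{\widetilde X}^{(N)}\|_{H^{-1}}^2.
\]
Integrating over $[0,T]$ gives $\|Z^{(N)}\|_{L^2([0,T]\times\Omega;H^{-1})}^2\leq T\,E\|\overline X^{(N)}-\overline{\widetilde X}^{(N)}\|_{H^{-1}}^2$, and the right-hand side tends to $0$ as $N\to\infty$ since both initial data converge to $\overline X$. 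On the other hand $Z^{(N)}\to X-\widetilde X$ in $L^2([0,T]\times\Omega;H^{-1})$, so the limit has zero norm and $X=\widetilde X$ for $\mathcal L\otimes P$-a.e.\ $(t,\omega)$. Finally I would upgrade this to an almost sure identity on all of $[0,T]$ using the continuity in $t$ of each component (as at the end of the proof of Theorem~\ref{t:existence_l2init}) and intersecting over the countably many $n$.

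I expect the only genuine obstacle to be conceptual rather than computational: one must resist the temptation to treat $X-\widetilde X$ directly as a solution whose second moments solve~\eqref{eq:ode2ndmom}, since a moderate solution need not lie in $L^2(l^2)$ and the cross-correlations between $X$ and $\widetilde X$ would spoil the closed system. Pushing the linearity down to the proper-solution approximants $Z^{(N)}$, where Proposition~\ref{p:sol_2ndmom} genuinely applies, is precisely what makes the argument go through; the remaining steps are routine limiting and continuity arguments.
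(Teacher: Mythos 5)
Your argument is correct and is essentially the paper's own proof: the paper reduces to observing that $X-\widetilde X$ is a moderate solution with $\sigma=0$ and zero initial condition (which is exactly your $Z^{(N)}=X^{(N)}-\widetilde X^{(N)}$ observation, there left as ``easy to see''), then applies Proposition~\ref{p:sol_2ndmom} and Lemma~\ref{l:H-1_contraction} to the approximants, integrates, passes to the limit, and concludes by continuity of trajectories. Your version merely unpacks that reduction explicitly, and your closing remark about why one cannot apply the contraction lemma directly to $X-\widetilde X$ is exactly the right reason the argument must go through the proper-solution approximants.
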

	
	\begin{proof}
		By Definition~\ref{d:moderate_sol}, it is easy to see that
		$X-\widetilde X$ is a moderate solution defined on $[0,T]$ for the
		model with $\sigma=0$ and with zero initial condition, so without
		loss of generality we assume $\sigma=0$, $\overline X=0$ and
		$\widetilde X=0$. Let $X^{(N)}$ and $\overline X^{(N)}$ be as in
		Definition~\ref{d:moderate_sol}. For all $N\geq1$, $X^{(N)}$ is a
		proper solution of the model with $\sigma=0$, so by
		Proposition~\ref{p:sol_2ndmom} we can apply
		Lemma~\ref{l:H-1_contraction}, yielding that for all $t\in[0,T]$
		\[
		E\|X^{(N)}(t)\|_{H^{-1}}^2
		\leq E\|\overline X^{(N)}\|_{H^{-1}}^2.
		\]
		By integrating we get
		\[
		\int_0^TE\|X^{(N)}(t)\|_{H^{-1}}^2\mathrm{d}t
		\leq TE\|\overline X^{(N)}\|_{H^{-1}}^2.
		\]
		Taking the limit for $N\to\infty$, we have that the
		$L^2([0,T]\times\Omega;H^{-1})$-norm of $X$ is zero. Finally, by the
		continuity of trajectories, it is easy to conclude that $X(t)=0$ for
		all $t$, almost surely.
	\end{proof}
	
        \begin{corollary}
          \label{r:uniq_ps}
          Since proper solutions are moderate solutions, uniqueness
          holds in the class of proper solutions too, whatever the
          initial condition.
          This means in particular that the
          inequalities~\eqref{e:ps_en_bd_avg}
          and~\eqref{e:ps_en_bd_as} hold in general for proper solutions with
          initial conditions in $L^4(\Omega;l^2)$ and that the sequence
          of approximants $(X^{(N)})_{N\geq1}$ in
          Definition~\ref{d:moderate_sol} is uniquely determined by
          their initial conditions.
        \end{corollary}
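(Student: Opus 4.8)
The plan is to reduce every assertion to the uniqueness statement for moderate solutions, Theorem~\ref{t:uniqueness_moderate}, after checking that a proper solution is a special case of a moderate one. First I would verify that any proper solution $X$ whose initial condition satisfies $\overline X\in L^2(\Omega;H^{-1})$ is itself a moderate solution with that same initial condition. The three requirements in Definition~\ref{d:moderate_sol} are then immediate: the membership $X\in L^2([0,T]\times\Omega;H^{-1})$ follows from $X\in L^2([0,T]\times\Omega;l^2)$ together with the norm-nonincreasing embedding $l^2=H^0\subseteq H^{-1}$; the condition on $\overline X$ is precisely the assumed admissibility; and the approximating sequence may be taken constant, $X^{(N)}:=X$ with $\overline X^{(N)}:=\overline X$, exactly as pointed out in the remark following Definition~\ref{d:moderate_sol}.

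Granting this, uniqueness for proper solutions is direct: if $X$ and $\widetilde X$ are two proper solutions sharing an initial condition $\overline X\in L^2(\Omega;H^{-1})$, then both are moderate solutions with the same initial condition, and Theorem~\ref{t:uniqueness_moderate} forces $X=\widetilde X$ on the common time interval, almost surely. I stress that no integrability beyond $\overline X\in L^2(\Omega;H^{-1})$ enters here, which is the content of the phrase ``whatever the initial condition'': the stronger hypothesis $\overline X\in L^4(\Omega;l^2)$ of Theorem~\ref{t:existence_l2init} is used only to produce a solution, not to single it out.

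Next I would transfer the energy estimates. For $\overline X\in L^4(\Omega;l^2)$ we have in particular $\overline X\in L^2(\Omega;H^{-1})$, so Theorem~\ref{t:existence_l2init} yields a proper solution satisfying \eqref{e:ps_en_bd_avg}, and \eqref{e:ps_en_bd_as} when $\sigma=0$. By the uniqueness just obtained this is the only proper solution with that initial condition, so any proper solution with initial condition in $L^4(\Omega;l^2)$ coincides with it and hence inherits both bounds. For the approximants in Definition~\ref{d:moderate_sol}, each $X^{(N)}$ is a proper solution with initial condition $\overline X^{(N)}$; since $\overline X^{(N)}\to\overline X$ in $L^2(\Omega;H^{-1})$, every $\overline X^{(N)}$ lies in $L^2(\Omega;H^{-1})$, and applying uniqueness term by term shows that $X^{(N)}$ is determined by $\overline X^{(N)}$, so the entire approximating sequence is fixed once the sequence of initial conditions is.

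The only step requiring attention, and thus the (mild) main obstacle, is the very first one: confirming that the defining requirements of a moderate solution are genuinely met by a proper solution. This rests entirely on the embedding $l^2\hookrightarrow H^{-1}$ being norm-nonincreasing and on the admissibility hypothesis $\overline X\in L^2(\Omega;H^{-1})$; once these are in hand, the corollary is a purely formal consequence of Theorem~\ref{t:uniqueness_moderate}.
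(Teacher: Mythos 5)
Your proposal is correct and follows essentially the same route as the paper, which proves this corollary implicitly by observing (as in the remark after Definition~\ref{d:moderate_sol}) that a proper solution with initial condition in $L^2(\Omega;H^{-1})$ is a moderate solution via the constant approximating sequence, and then invoking Theorem~\ref{t:uniqueness_moderate}. Your added care in flagging that the reduction needs $\overline X\in L^2(\Omega;H^{-1})$ (which is automatic for the $L^4(\Omega;l^2)$ initial conditions and for the approximants) is a sensible reading of the paper's phrase ``whatever the initial condition.''
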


	To conclude this section, we now state and prove the existence result for moderate solutions, using once again Lemma~\ref{l:H-1_contraction}.
	
	\begin{theorem}
		For all $\overline X\in L^2(\Omega;H^{-1})$ there
		exists a moderate solution $X$ with initial condition $\overline
		X$, such that
		\begin{equation}
		\label{e:avg_h1_bd_modsol}
		E\|X(t)\|_{H^{-1}}^2
		\leq 2E\|\overline X\|_{H^{-1}}^2+2\sigma^2t
		,\qquad\text{for all }t\in[0,T].
		\end{equation}
		
		Moreover the approximants $(X^{(N)})_{N\geq1}$ of $X$
                can be taken as the unique proper solutions with the
                following initial conditions
		\begin{equation}\label{e:def_XNn_in_thm_exist_moderate}
		\overline X^{(N)}_n:=-N\vee (\overline X_n\mathbbm1_{n\leq N})\wedge N
		,\qquad n\geq1.
		\end{equation}
	\end{theorem}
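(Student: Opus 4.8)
The plan is to use as approximating sequence the proper solutions attached to the truncated initial data \eqref{e:def_XNn_in_thm_exist_moderate}, to prove that they form a Cauchy sequence in $L^2([0,T]\times\Omega;H^{-1})$ via the contraction of Lemma~\ref{l:H-1_contraction}, and finally to identify the limit as the desired moderate solution while reading off the bound \eqref{e:avg_h1_bd_modsol} from a splitting into homogeneous and forced parts. First I would observe that each $\overline X^{(N)}$ in \eqref{e:def_XNn_in_thm_exist_moderate} is bounded by $N$ and has only finitely many nonzero components, hence lies in $L^4(\Omega;l^2)$; Theorem~\ref{t:existence_l2init} then produces a proper solution $X^{(N)}\in L^\infty([0,T];L^4(\Omega;l^2))$, unique by Corollary~\ref{r:uniq_ps}. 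Since $|\overline X^{(N)}_n|\le|\overline X_n|$ and, for each fixed $n$, $\overline X^{(N)}_n\to\overline X_n$ almost surely (the clamping at level $N$ eventually deactivates once $N$ exceeds $|\overline X_n|$ and $N\ge n$), dominated convergence with a dominating function a multiple of $\|\overline X\|_{H^{-1}}^2$ yields $\overline X^{(N)}\to\overline X$ in $L^2(\Omega;H^{-1})$.

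Next I would prove that $(X^{(N)})_N$ is Cauchy. By linearity of \eqref{e:main_ito} the difference $X^{(N)}-X^{(M)}$ is a proper solution of the model with $\sigma=0$ and initial datum $\overline X^{(N)}-\overline X^{(M)}$, whose components are bounded, hence in $L^2(\Omega)$. Thus Proposition~\ref{p:sol_2ndmom} applies, its second moments satisfy \eqref{eq:ode2ndmom} with $\sigma=0$, and Lemma~\ref{l:H-1_contraction} gives
\[
E\|X^{(N)}(t)-X^{(M)}(t)\|_{H^{-1}}^2\le E\|\overline X^{(N)}-\overline X^{(M)}\|_{H^{-1}}^2,\qquad t\in[0,T].
\]
Integrating over $[0,T]$ and using that $(\overline X^{(N)})_N$ is Cauchy in $L^2(\Omega;H^{-1})$, the right-hand side tends to $0$ as $N,M\to\infty$, so $(X^{(N)})_N$ is Cauchy in the complete space $L^2([0,T]\times\Omega;H^{-1})$ and converges to some $X$. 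To see that $X$ is a componentwise solution, note that $H^{-1}$-convergence forces $X^{(N)}_n\to X_n$ in $L^2([0,T]\times\Omega)$ for each fixed $n$, because $k_n^{-2}$ is a fixed positive constant; one can then pass to the limit in the integral form of \eqref{e:main_ito} exactly as in Remark~\ref{r:limit_inside_integrals} (the strong convergence only simplifying matters) and select the continuous modification furnished by the resulting integral equations. Together with the convergence of the initial data, this shows $X$ is a moderate solution with initial condition $\overline X$ and approximants $(X^{(N)})_N$.

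It remains to establish \eqref{e:avg_h1_bd_modsol}. Here I would write $X^{(N)}=Y^{(N)}+Z$, where $Y^{(N)}$ is the proper solution of the homogeneous model ($\sigma=0$) with initial datum $\overline X^{(N)}$ and $Z$ is the proper solution of the forced model with zero initial datum; both exist by Theorem~\ref{t:existence_l2init}, the identity following from linearity and the uniqueness of Corollary~\ref{r:uniq_ps} (note $Z$ does not depend on $N$). For $Y^{(N)}$, Lemma~\ref{l:H-1_contraction} gives $E\|Y^{(N)}(t)\|_{H^{-1}}^2\le E\|\overline X^{(N)}\|_{H^{-1}}^2\le E\|\overline X\|_{H^{-1}}^2$, while for $Z$ the energy bound \eqref{e:ps_en_bd_avg} combined with $\|\cdot\|_{H^{-1}}\le\|\cdot\|_{l^2}$ gives $E\|Z(t)\|_{H^{-1}}^2\le\sigma^2 t$. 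The elementary inequality $(a+b)^2\le 2a^2+2b^2$ then yields $E\|X^{(N)}(t)\|_{H^{-1}}^2\le 2E\|\overline X\|_{H^{-1}}^2+2\sigma^2 t$, which accounts for the factor $2$ in the statement. Finally I would transfer this to $X$ as at the end of the proof of Theorem~\ref{t:existence_l2init}: integrate over an arbitrary measurable $D\subset[0,T]$ and use weak lower semicontinuity of the $L^2(D\times\Omega;H^{-1})$-norm to obtain the bound for a.e.\ $t$, then upgrade it to every $t$ by continuity of trajectories and finiteness of truncated sums.

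The step I expect to demand the most care is verifying that the limit $X$ is genuinely a componentwise solution, namely that componentwise $L^2$-convergence is enough to pass to the limit inside both the stochastic and the Lebesgue integrals and that a continuous modification exists, precisely the delicate point isolated in Remark~\ref{r:limit_inside_integrals}; the remaining arguments are a fairly mechanical combination of Lemma~\ref{l:H-1_contraction}, Proposition~\ref{p:sol_2ndmom}, and the bounds of Theorem~\ref{t:existence_l2init}.
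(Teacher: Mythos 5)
Your proposal is correct and follows essentially the same route as the paper: truncated initial data \eqref{e:def_XNn_in_thm_exist_moderate}, the Cauchy property via Proposition~\ref{p:sol_2ndmom} and Lemma~\ref{l:H-1_contraction}, dominated convergence on $\Omega\times\mathbb N$ for the initial data, strong-$L^2$ passage to the limit in the integral equations, and the splitting into a forced zero-initial-datum part $Z$ and an unforced part to obtain the factor $2$ in \eqref{e:avg_h1_bd_modsol}. The only cosmetic difference is that the paper performs the decomposition $X=Z+(\text{unforced part})$ at the outset and runs the whole Cauchy argument with $\sigma=0$, whereas you keep the forcing in the approximants and only decompose when deriving the bound; since forcing cancels in differences, the two are equivalent.
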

	
	\begin{proof}
		By virtue of Theorem~\ref{t:existence_l2init} and
                Corollary~\ref{r:uniq_ps}, there exists a unique proper
		solution $Z$ of~\eqref{e:main_ito} with zero initial condition. Below we will 
		exhibit a
		moderate solution $X$ for the model with $\sigma=0$ and initial
		condition $\overline X$. Then, by linearity, $Z+X$ will be the
		required moderate solution.
		
		We can assume $\sigma=0$. For $N\geq1$, let
		$\overline X^{(N)}\in L^\infty(\Omega,l^2)$ be defined as
		in~\eqref{e:def_XNn_in_thm_exist_moderate}. Then, by
		Theorem~\ref{t:existence_l2init} and
                Corollary~\ref{r:uniq_ps}, there exists a unique proper solution
		$X^{(N)}$ with initial condition $\overline X^{(N)}$. In view of
		Definition~\ref{d:moderate_sol}, we will show that $(X^{(N)})_{N\geq1}$
		is a Cauchy sequence in $L^2([0,T]\times\Omega;H^{-1})$ and that
		$\overline X^{(N)}\to\overline X$ in $L^2(\Omega;H^{-1})$, as
		$N\to\infty$.
		
		For all $M,N\geq1$, the difference $X^{(M)}-X^{(N)}$ is a proper solution,
		hence by Proposition~\ref{p:sol_2ndmom} we can apply
		Lemma~\ref{l:H-1_contraction}, yielding that
		\begin{equation}\label{e:contraction_cauchy_proper_sol}
		\int_0^TE\|X^{(M)}(t)-X^{(N)}(t)\|_{H^{-1}}^2\mathrm{d}t
		\leq TE\|\overline X^{(M)}-\overline X^{(N)}\|_{H^{-1}}^2.
		\end{equation}
		Thus, if we can prove the convergence for the sequence of initial conditions, 
		we also get the Cauchy property for the sequence
		$(X^{(N)})_{N\geq1}$. To this end, consider the measurable functions, on
		$\Omega\times\mathbb N$ defined by
		\[
		\psi(\omega,n):=k_n^{-2}\overline X_n(\omega)^2
		,\ \text{and}\ 
		\psi^{(N)}(\omega,n):=k_n^{-2}\bigl(\overline X^{(N)}_n(\omega)-\overline X_n(\omega)\bigr)^2
		,\  N\geq1.
		\]
		Clearly $\psi^{(N)}\to0$ pointwise, as $N\to\infty$, and also in
		$L^1(\Omega\times\mathbb N)$, since
		$\psi^{(N)}\leq\psi\in L^1(\Omega\times\mathbb N)$. On the other hand we
		have,
		\begin{align*}
		\|\overline X^{(N)}-\overline X\|_{L^2(\Omega;H^{-1})}^2
		& =E\|\overline X^{(N)}-\overline X\|_{H^{-1}}^2\\
		& =E\sum_{n\geq1}k_n^{-2}(\overline X^{(N)}_n-\overline X_n)^2
		=\|\psi^{(N)}\|_{L^1(\Omega\times\mathbb N)}.
		\end{align*}
		Hence $\overline X^{(N)}\to\overline X$ in $L^2(\Omega;H^{-1})$, as
		$N\to\infty$, then by~\eqref{e:contraction_cauchy_proper_sol} the
		sequence $(X^{(N)})_{N\geq1}$ has the Cauchy property and there exists
		the limit $X\in L^2([0,T]\times\Omega;H^{-1})$.
		
		To conclude the proof of the existence statement, we
                need to show that $X$ admits a modification which is a
                componentwise solution, that is, $X$ has a
                modification with continuous adapted trajectories,
                which solves system~\eqref{e:main_ito}.  This is
                completely standard and a simpler version of the
                argument in the proof of
                Theorem~\ref{t:existence_l2init}, with strong $L^2$
                convergence in place of weak $L^2$ convergence.
		
		To prove the bound~\eqref{e:avg_h1_bd_modsol} of the $H^{-1}$ norm,
		we can notice that
		Lemma~\ref{l:H-1_contraction} applies to the approximants
		$X^{(N)}$ and, taking the limit, the same inequality holds for
		$X$. Then it is enough to recall that when $\sigma>0$ we need
		to take the auxiliary proper solution $Z$ into account, for
		which~\eqref{e:ps_en_bd_avg} applies, so that, with 
		the $H^{-1}$ norm controlled by the $l^2$ one,
		\[
		E\|X(t)+Z(t)\|_{H^{-1}}^2
		\leq 2E\|X(t)\|_{H^{-1}}^2 + 2E\|Z(t)\|_{H^{-1}}^2
		\leq 2E\|\overline X\|_{H^{-1}}^2+2\sigma^2t. \qedhere
		\]
	\end{proof}

	\section{Regularity of moderate solutions}
	\label{sec:markovchain}
	Now that we have introduced moderate solutions and shown their existence and uniqueness' results, let us go back to the second moments' 
	system~\eqref{eq:ode2ndmom}, and delve deeper into it.
	We can show a Markov chain associated with our system. This is not surprising, as it is the case for other models in the dyadic family (see for example~\cite{BarFlaMor2011AAP, Bianchi2013, BarMor2013NON, BrzFlaNekZeg}). This associated process will allow us to prove sharper estimates on the norm of solutions, leading us to Theorem~\ref{thm:regularity_moderate} at the end of this section, which states that moderate solutions are, in a sense, much more regular than one would expect from the definition.
	
	Let $\Pi$ be the infinite matrix defined by
	\begin{equation*}
	\Pi_{i,j}=
	\begin{cases}
	-k_1^2 & j=i=1 \\
	-k_{i-1}^2-k_i^2 & j=i\geq2 \\
	k_{i-1}^2 & j=i-1 \\
	k_i^2 & j=i+1
	\end{cases}
	\qquad\text{for }i,j\geq1,
	\end{equation*}
	or, in an equivalent way, 
	\[
	\Pi=\begin{pmatrix}
	-k_1^2 & k_1^2 & 0 & 0 & \dots \\
	k_1^2 & -k_1^2-k_2^2 & k_2^2 & 0 &\dots\\
	0 & k_2^2 & -k_2^2-k_3^2 & k_3^2 & \dots\\
	0 & 0 & k_3^2 & -k_3^2-k_4^2 & \dots \\
	\dots&\dots&\dots&\dots&\dots\\
	\end{pmatrix}.
	\]
	
	With this definition, $\Pi$ is the stable and conservative $q$-matrix
	associated to a continuous-time Markov chain on the positive integers
	(see~\cite{Anderson} for a comprehensive discussion). The
	corresponding Kolmogorov equations are
	\begin{align*}
	u'&=u\Pi
	,\quad u\in L^\infty(\mathbb R_+;l^1(\mathbb R_+))
	\tag{forward}\\
	u'&=\Pi u
	,\quad u\in L^\infty(\mathbb R_+;l^\infty(\mathbb R_+)).
	\tag{backward}
	\end{align*}
	
	Since $\Pi$ is symmetric, both the forward and the backward
	equations are formally equivalent to system~\eqref{eq:ode2ndmom} with
	$\sigma=0$. From now on we will refer in particular to the forward
	equations, because we will be studying the second moments of the
	finite energy solutions of the original system, which will belong to
	the class $L^\infty([0,T];l^1(\mathbb R_+))$.
	
	The forward equations are well-posed. In particular, it is a general fact
	(see for example Theorem 2.2 in~\cite{Anderson} and references
	therein) that, for a $q$-matrix such as $\Pi$, there exists a transition function 
	$f=(f_{i,j}(t))_{i,j\geq1,t\geq0}$ such that, for all $i\geq1$,
	$f_{i,\cdot}$ is a solution of the forward equations with initial
	condition $\delta_{i,\cdot}$, and, for all $j\geq1$, $f_{\cdot,j}$ is a
	solution of the backward equations with initial condition
	$\delta_{\cdot,j}$. This is called the \emph{minimal transition function}
	associated with the $q$-matrix $\Pi$, and has some nice properties, 
	for example $\sum_jf_{i,j}\leq1$ (which is used in the
	proof of Theorem~\ref{t:uniqueness_linear} below). Its uniqueness depends on 
	the form and properties of
	$\Pi$, and in
	our case classical results (see~\cite{Anderson}, again) show that there is uniqueness
	in the class of solutions of the forward equations while there are
	infinitely many solutions in the class of solutions of the backward
	equations.
	
	Nonetheless, we need a statement of uniqueness in a larger class, because we consider $l^1(\mathbb R)$ instead of $l^1(\mathbb R_+)$, and $L^\infty([0,T],l^1)$ instead of $L^\infty(\mathbb R_+, l^1)$. 
	
	\begin{lemma}\label{l:l1_mix_fij}
		Let $\overline u\in l^1$ and $f$ be the minimal transition function
		of $\Pi$. Then
		\[
		u_j(t):=\sum_{i\geq1}\overline u_if_{i,j}(t)
		,\qquad i\geq1
		\]
		defines a solution $u$ of the forward equations in the class
		$L^\infty(\mathbb R_+;l^1)$ with initial condition $\overline{u}$.
	\end{lemma}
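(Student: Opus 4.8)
The plan is to verify the three defining properties in turn: that the series defining $u$ converges and lands in $L^\infty(\mathbb R_+;l^1)$, that $u$ has the correct initial datum, and that $u$ solves the forward equations. The crucial input throughout is the sub-stochasticity of the minimal transition function, $\sum_{j\geq1}f_{i,j}(t)\leq1$, together with its nonnegativity $f_{i,j}(t)\geq0$; in particular $f_{i,j}(t)\leq1$ for every $i,j,t$.

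First I would establish well-definedness together with the uniform bound. Since $f_{i,j}(t)\leq1$, for each fixed $j$ and $t$ the series $\sum_{i\geq1}|\overline u_i|f_{i,j}(t)\leq\|\overline u\|_{l^1}$ converges, so $u_j(t)$ is well-defined. Dominating by absolute values and applying Tonelli to the nonnegative array $\bigl(|\overline u_i|f_{i,j}(t)\bigr)_{i,j}$ then gives
\[
\sum_{j\geq1}|u_j(t)|
\leq\sum_{i\geq1}|\overline u_i|\sum_{j\geq1}f_{i,j}(t)
\leq\sum_{i\geq1}|\overline u_i|=\|\overline u\|_{l^1}
\]
for every $t\geq0$, whence $\|u(t)\|_{l^1}\leq\|\overline u\|_{l^1}$ and $u\in L^\infty(\mathbb R_+;l^1)$. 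The initial condition is immediate from $f_{i,j}(0)=\delta_{i,j}$, so $u_j(0)=\overline u_j$. Continuity of each component $u_j$ follows from the Weierstrass $M$-test, since $|\overline u_i f_{i,j}(t)|\leq|\overline u_i|$ with $\sum_i|\overline u_i|<\infty$: the series converges uniformly in $t$ and each $u_j$ is continuous.

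The heart of the argument is the forward equation, and the main obstacle is exchanging the infinite sum over $i$ with the time integral. I would start from the integral form of the forward equations satisfied by each row $f_{i,\cdot}$: for $j\geq2$,
\[
f_{i,j}(t)=\delta_{i,j}+\int_0^t\Bigl(k_{j-1}^2f_{i,j-1}(s)-(k_{j-1}^2+k_j^2)f_{i,j}(s)+k_j^2f_{i,j+1}(s)\Bigr)\,\mathrm{d}s,
\]
and analogously $f_{i,1}(t)=\delta_{i,1}+\int_0^t\bigl(-k_1^2f_{i,1}(s)+k_1^2f_{i,2}(s)\bigr)\,\mathrm{d}s$, matching the first row of $\Pi$. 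Multiplying by $\overline u_i$ and summing over $i$, the exchange of $\sum_i$ and $\int_0^t$ is justified by Fubini's theorem: since $0\leq f_{i,j'}(s)\leq1$ for every index, the integrand is bounded by $2(k_{j-1}^2+k_j^2)$, so $\sum_i|\overline u_i|\int_0^t|\cdots|\,\mathrm{d}s\leq 2(k_{j-1}^2+k_j^2)\,t\,\|\overline u\|_{l^1}<\infty$. After the exchange each inner sum $\sum_i\overline u_i f_{i,j'}(s)$ equals $u_{j'}(s)$, yielding
\[
u_j(t)=\overline u_j+\int_0^t\Bigl(k_{j-1}^2u_{j-1}(s)-(k_{j-1}^2+k_j^2)u_j(s)+k_j^2u_{j+1}(s)\Bigr)\,\mathrm{d}s.
\]
As the $u_{j'}$ are continuous and bounded, the integrand is continuous in $s$, so $u_j\in C^1$ and differentiation recovers the $j$-th forward equation, which is precisely system~\eqref{eq:ode2ndmom} with $\sigma=0$. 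The only genuinely delicate point is the domination licensing Fubini, which rests entirely on the uniform boundedness of the entries of the minimal transition function.
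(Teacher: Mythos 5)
Your proof is correct and follows essentially the same route as the paper's: the $l^1$ bound comes from the sub-stochasticity $\sum_{j\geq1}f_{i,j}(t)\leq1$ exactly as in the paper, and the verification of the forward equations rests on the same domination, namely the uniform boundedness of the $f_{i,j}$ combined with the finitely many bounded entries of $\Pi$ in each column. The only cosmetic difference is that you interchange the sum over $i$ with the time integral in the integral form of the equations via Fubini, whereas the paper interchanges the sum with the time derivative directly; both interchanges are licensed by the same estimate.
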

	
	\begin{proof}
		Since $f$ is a transition function, it is non-negative and
		$\sum_{j\geq1}f_{i,j}(t)\leq1$ for all $i\geq1$ and all $t\geq0$, so
		in particular,
		\[
		\|u(t)\|_{l^1}
		\leq\sum_{i,j\geq1}|\overline u_i|f_{i,j}(t)
		\leq\|\overline u\|_{l^1}.
		\]
		Formal differentiation gives
		\[
		u'_j(t)
		=\sum_{i\geq1}\overline u_if'_{i,j}(t)
		=\sum_{i,k\geq1}\overline u_if_{i,k}(t)\Pi_{k,j}
		=\sum_{k\geq1}u_k(t)\Pi_{k,j}
		,
		\]
		however, to conclude we must check that differentiation commutes with the sum over
		$i$:
		\[
		\sum_{i,k\geq1}|\overline u_if_{i,k}(t)\Pi_{k,j}|
		\leq\sum_{i\geq1}|\overline u_i|\sum_{k\geq1}|\Pi_{k,j}|
		\leq C(j)\|\overline u\|_{l^1}
		<\infty
		. \qedhere
		\]
	\end{proof}
	
	The following theorem mimics results in~\cite{Anderson}, but requires a new proof nevertheless, as already mentioned, since we are considering different spaces.
	\begin{theorem}\label{t:uniqueness_linear}
		For all $T>0$ there is uniqueness of the solution for the forward equations in the
		class $L^1([0,T];l^1(\mathbb R))$, for any initial
                condition. The same holds for
                system~\eqref{eq:ode2ndmom}, that is, when $\sigma>0$.
	\end{theorem}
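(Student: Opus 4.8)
The plan is to derive everything from a weighted $l^1$ contraction estimate that is the signed analogue of the computation already performed in Lemma~\ref{l:H-1_contraction}, and which has the advantage of working directly on the finite interval $[0,T]$. First I would reduce to a homogeneous problem with zero data. If $u^{(1)}$ and $u^{(2)}$ are two solutions in $L^1([0,T];l^1(\mathbb R))$ of the forward equations (resp.\ of~\eqref{eq:ode2ndmom}) with the same initial condition, then $w:=u^{(1)}-u^{(2)}$ lies in $L^1([0,T];l^1(\mathbb R))$ and solves $w'=w\Pi$ with $w(0)=0$; the forcing $\sigma^2$ in the first equation cancels, so the case $\sigma>0$ is covered by the same argument, and it suffices to prove $w\equiv0$.

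The key is the weight $\rho_j:=k_j^{-2}=\lambda^{-2j}$, and two structural facts: the off-diagonal entries of $\Pi$ are nonnegative while the diagonal is negative (so $\Pi$ is a generator), and $\rho$ is $\Pi$-superharmonic, namely $(\Pi\rho)_k=0$ for $k\ge2$ and $(\Pi\rho)_1=\lambda^{-2}-1<0$, so $(\Pi\rho)_k\le0$ for every $k$. This is exactly the telescoping of Lemma~\ref{l:H-1_contraction}, now carried out with $|w_j|$ in place of the nonnegative $u_j$. I would set $S_M(t):=\sum_{j=1}^M\rho_j|w_j(t)|$. Each $w_j$ is absolutely continuous, since for fixed $j$ the integrated equation has the $L^1(0,T)$ right-hand side $(w\Pi)_j$ (only three components appear), so $S_M$ is absolutely continuous with $\frac{d}{dt}|w_j|=\mathrm{sgn}(w_j)\,w_j'$ a.e. Using $\mathrm{sgn}(w_j)w_j\Pi_{j,j}=\Pi_{j,j}|w_j|$ and $\mathrm{sgn}(w_j)w_{j\pm1}\Pi_{j\pm1,j}\le\Pi_{j\pm1,j}|w_{j\pm1}|$, the finite sum telescopes: the interior coefficients collapse to $(\Pi\rho)_k\le0$, the endpoint $k=1$ and $k=M$ terms are nonpositive, and the only surviving positive contribution is the boundary flux across level $M$, giving $\frac{d}{dt}S_M\le|w_{M+1}|$ a.e.

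Integrating and using $w(0)=0$ yields $S_M(t)\le\int_0^t|w_{M+1}(s)|\,ds$ for every $M$. Letting $M\to\infty$, the left side increases to $S(t)=\sum_{j\ge1}\rho_j|w_j(t)|$ by monotone convergence, while the right side is a tail of $\int_0^t\|w(s)\|_{l^1}\,ds<\infty$ and hence vanishes; thus $S(t)=0$, so $w(t)=0$ for a.e.\ $t$ and, by continuity of the components, for all $t$. The main obstacle is precisely the control of this truncation boundary term: the flux across level $M$ carries a factor $k_M^2=\lambda^{2M}$, which is cancelled exactly by $\rho_M=\lambda^{-2M}$, leaving the single term $|w_{M+1}|$ whose time-integral is killed by the spatial $l^1$ summability. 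This is where membership in $l^1(\mathbb R)$ rather than merely $l^\infty$ is indispensable, and it explains why the bare dual/transition-function route is delicate: pairing $w$ directly with $f_{\cdot,j_0}$ produces an uncancelled factor $k_N^2$ against a single tail component, which cannot be absorbed without extra decay of the minimal transition function. An alternative closer to~\cite{Anderson} is to Laplace-transform in time and reduce uniqueness to the fact that $\Pi v=\xi v$ has no nonzero $l^1$ solution for $\xi>0$: indeed $v_1\ne0$ forces, through the recursion $k_j^2(v_{j+1}-v_j)=\xi v_j+k_{j-1}^2(v_j-v_{j-1})$, a strictly monotone sequence bounded away from $0$, contradicting summability; I would nonetheless favour the contraction argument above, since it dispenses with the Laplace machinery and handles the finite horizon without boundary-in-time corrections.
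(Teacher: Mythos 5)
Your argument is correct, but it follows a genuinely different route from the paper. The paper argues by contradiction: it takes a nonzero solution with null initial condition, picks a time $\tau$ where the solution is nonzero but still in $l^1$, extends it to $[0,\infty)$ by gluing on the minimal transition function via Lemma~\ref{l:l1_mix_fij}, and then Laplace-transforms in time to reach the resolvent identity $\lambda r(\lambda)=r(\lambda)\Pi$; the tridiagonal recursion for $r$ forces either $r\equiv0$ or a sequence with $r_i/r_1$ strictly increasing above $1$, contradicting $r(\lambda)\in l^1$. This is precisely the ``alternative'' you sketch in your closing paragraph (with the recursion written in exactly the paper's form), so you have in effect reproduced the paper's proof as your fallback. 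Your primary argument instead runs the telescoping computation of Lemma~\ref{l:H-1_contraction} on $|w_j|$, using that the off-diagonal entries of $\Pi$ are nonnegative and that the weight $\rho_j=k_j^{-2}$ is $\Pi$-superharmonic, to get $\frac{d}{dt}\sum_{j\le M}\rho_j|w_j|\le|w_{M+1}|$ and then kill the boundary flux by the $l^1$ summability in space integrated in time. What your route buys is self-containedness and economy: it never invokes the minimal transition function, needs no extension of the solution beyond $[0,T]$ (which is the sole reason the paper introduces $\tilde u$), and works directly with signed $l^1$ data by the triangle inequality on the generator. What the paper's route buys is alignment with the classical $q$-matrix machinery of~\cite{Anderson} and an explicit picture of \emph{why} uniqueness can only hold in $l^1$ and not weaker spaces (the growth of the resolvent sequence), a point the paper exploits in the remark following the theorem; your observation that the truncation flux $k_M^2\rho_M=1$ is exactly cancelled gives an equivalent explanation. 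The only points to make explicit in a written version are the ones you already flag: each component $w_j$ is absolutely continuous because the integrated equation has an $L^1(0,T)$ right-hand side involving only three components, and the identity $\frac{d}{dt}|w_j|=\mathrm{sgn}(w_j)\,w_j'$ a.e.\ (with the convention $\mathrm{sgn}(0)=0$) is the standard chain rule for absolutely continuous functions, the points where $w_j=0$ causing no harm since there the right-hand side of the componentwise inequality is nonnegative.
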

	
	\begin{proof}
		By linearity, suppose by contradiction that $u$ is a nonzero
		solution in $L^1([0,T];l^1)$ with null initial
                condition and $\sigma=0$ (this applies to both cases). We start
		by constructing another solution $\tilde u$ defined on the whole 
		$[0,\infty)$. Let $\tau\leq T$ be a time such that $u(\tau)\neq0$ but
		$\|u(\tau)\|_{l^1}<\infty$. Let $\tilde u=u$ on $[0,\tau]$ and extend it
		after $\tau$ with the minimal transition function $f$,
		\[
		\tilde u_j(t+\tau)=\sum_{i\geq1}u_i(\tau)f_{i,j}(t)
		,\qquad j\geq1,\ t\geq0.
		\]
		By Lemma~\ref{l:l1_mix_fij}, $\tilde u$ is a solution of the forward
		equations in the class $L^1([0,T];l^1)\cap L^\infty([T,\infty),l^1)$
		and in particular we can define the residuals
		$r=(r_i(\lambda))_{i\geq1,\lambda>0}$ as
		\[
		r(\lambda):=\int_0^\infty \lambda e^{-\lambda t}\tilde u(t)\mathrm{d}t\in l^1.
		\]
		Then by integrating by parts using $\tilde u(0)=0$,
		\[
		r(\lambda)
		=\int_0^\infty e^{-\lambda t}\tilde u'(t)\mathrm{d}t
		=\int_0^\infty e^{-\lambda t}\tilde u(t)\Pi \mathrm{d}t,
		\]
		we get the algebraic relation
		$\lambda r(\lambda)=r(\lambda)\Pi$, that is
		\[
		\begin{cases}
		\lambda r_1=k_1^2(r_2-r_1)\\
		\lambda r_i=k_i^2(r_{i+1}-r_i)-k_{i-1}^2(r_i-r_{i-1})
		,\qquad i\geq2.
		\end{cases}
		\]
		These can be solved recursively: either $r_i=0$ for all $i\geq1$, or
		$r_i/r_1>1$ for all $i\geq2$. 
		To quickly see this, one can prove by
		induction on $i$ that $r_i/r_1>r_{i-1}/r_1\geq1$. The base case for
		$i=2$ comes from the first equation, while the inductive step comes 
		from the second one:
		\[
		k_i^2\biggl(\frac{r_{i+1}}{r_1}-\frac{r_i}{r_1}\biggr)
		=\lambda\frac{r_i}{r_1}+k_{i-1}^2\biggl(\frac{r_i}{r_1}-\frac{r_{i-1}}{r_1}\biggr)
		>0.
		\]
		We had $r(\lambda)\in l^1$, so $r(\lambda)=0$ for all $\lambda>0$
		yielding $\tilde u=0$ and hence a contradiction.
	\end{proof}
	
	\begin{remark}
		With this proof, $l^1$ is the best space we can get: if we relax to $l^{1^-}$, we do not get the contradiction, since the $r_i$'s might not explode, and one can actually show that 
		\[
		k_i^2\biggl(\frac{r_{i+1}}{r_1}-\frac{r_i}{r_1}\biggr)
		\]
		converges to a constant.
	\end{remark}
	
	We are now able to characterize the evolution in time of the second moments as a
	transformation through $\Pi$ of the second moments at time $0$.
	
	\begin{proposition}\label{p:2ndmom_as_pij}
		Let $\sigma=0$ and $f$ be the minimal transition function of
		$\Pi$. If $X$ is the moderate solution with initial condition
		$\overline X\in L^2(\Omega,H^{-1})$, then for all $j\geq1$ and
		$t\in[0,T]$,
		\begin{equation}\label{e:2ndmom_as_pij}
		E[X_j^2(t)]=\sum_{i\geq1}E[\overline X_i^2]f_{i,j}(t)<\infty.
		\end{equation}
	\end{proposition}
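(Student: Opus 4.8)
The plan is to transfer the identity from the approximating proper solutions, where it follows from the uniqueness theory for the forward equations, and then pass to the limit. Throughout, let $X^{(N)}$ be the proper solutions with the truncated initial conditions $\overline X^{(N)}$ of~\eqref{e:def_XNn_in_thm_exist_moderate}, which converge to $X$ in $L^2([0,T]\times\Omega;H^{-1})$, and write $u_j(t):=E[X_j^2(t)]$, $u^{(N)}_j(t):=E[(X^{(N)}_j(t))^2]$, together with the initial second moments $\overline u_i:=E[\overline X_i^2]$ and $\overline u^{(N)}_i:=E[(\overline X^{(N)}_i)^2]$. Since $X$ is the unique moderate solution (Theorem~\ref{t:uniqueness_moderate}), it is legitimate to work with this specific approximating sequence.

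First I would establish the identity for each fixed $N$: for all $j\geq1$ and all $t\in[0,T]$,
\[
u^{(N)}_j(t)=\sum_{i\geq1}\overline u^{(N)}_i f_{i,j}(t).
\]
Here $\overline u^{(N)}\in l^1$, since $\overline X^{(N)}$ has finitely many, bounded components. By Proposition~\ref{p:sol_2ndmom} the left-hand side, as a function of $t$, lies in $L^1([0,T];l^1)$ and solves the forward equations~\eqref{eq:ode2ndmom} with $\sigma=0$; by Lemma~\ref{l:l1_mix_fij} the right-hand side defines a forward solution in $L^\infty([0,T];l^1)\subseteq L^1([0,T];l^1)$. Both have initial datum $\overline u^{(N)}$, so Theorem~\ref{t:uniqueness_linear} forces them to coincide in $L^1([0,T];l^1)$, hence for a.e.~$t$; since both sides are continuous in $t$ (the left one because $u^{(N)}$ is differentiable by Proposition~\ref{p:sol_2ndmom}, the right one being a finite sum of continuous functions $f_{i,j}$), they coincide for every $t$.

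Next I would take $N\to\infty$ on each side separately. On the right, the truncation~\eqref{e:def_XNn_in_thm_exist_moderate} gives $(\overline X^{(N)}_i)^2\uparrow\overline X_i^2$ monotonically, hence $\overline u^{(N)}_i\uparrow\overline u_i$ by monotone convergence, and then, using $f_{i,j}\geq0$ and monotone convergence again in the index $i$, $\sum_i\overline u^{(N)}_i f_{i,j}(t)\uparrow\sum_i\overline u_i f_{i,j}(t)$ for every $t$ (the limit being possibly $+\infty$ a priori). On the left, the key point is to upgrade the time-averaged convergence $X^{(N)}\to X$ in $L^2([0,T]\times\Omega;H^{-1})$---which only yields $X^{(N)}_j\to X_j$ in $L^2([0,T]\times\Omega)$ componentwise---to convergence at each fixed time. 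To this end I would subtract the integral equations~\eqref{e:main_ito} for $X_j$ and $X^{(N)}_j$ and estimate $\|X_j(t)-X^{(N)}_j(t)\|_{L^2(\Omega)}$ by the initial-datum error $\|\overline X_j-\overline X^{(N)}_j\|_{L^2(\Omega)}$ plus, via the It\=o isometry for the martingale terms and Cauchy--Schwarz for the drift term, the quantities $\|X_m-X^{(N)}_m\|_{L^2([0,T]\times\Omega)}$ for $m\in\{j-1,j,j+1\}$; all of these tend to $0$. Thus $X^{(N)}_j(t)\to X_j(t)$ in $L^2(\Omega)$, and in particular $u^{(N)}_j(t)\to u_j(t)$, for every fixed $t$.

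Combining the three limits, $u_j(t)=\lim_N u^{(N)}_j(t)=\sum_i\overline u_i f_{i,j}(t)$ for all $t$, and the value is finite because the $L^2(\Omega)$-limit $X_j(t)$ satisfies $E[X_j^2(t)]<\infty$; this is exactly~\eqref{e:2ndmom_as_pij}. I expect the main obstacle to be precisely this last upgrade on the left-hand side: the definition of moderate solution only provides convergence averaged in time, so obtaining pointwise-in-$t$ control of the second moments requires going back to the integral equation rather than relying on the $H^{-1}$ convergence directly. A minor point to handle with care is the monotonicity $\overline u^{(N)}_i\uparrow\overline u_i$, which is what makes the monotone convergence on the right legitimate and guarantees the identity passes to the full (untruncated) initial datum, for which $\overline u$ need not belong to $l^1$.
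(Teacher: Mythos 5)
Your proposal is correct and follows the same overall strategy as the paper: first establish the identity for the proper approximants with truncated initial data, by playing the $L^1([0,T];l^1)$ solution of Proposition~\ref{p:sol_2ndmom} against the $L^\infty$ solution of Lemma~\ref{l:l1_mix_fij} and invoking the uniqueness of Theorem~\ref{t:uniqueness_linear}, then pass to the limit with monotone convergence on the right-hand side. The one place where you genuinely diverge is the left-hand side of the limit: the paper only uses that $X^{(N)}_j\to X_j$ in $L^2([0,T]\times\Omega)$, hence $u^{(N)}_j\to u_j$ in $L^1(0,T)$, identifies the limit with the monotone limit for a.e.~$t$, and then extends to all $t$ via the uniform bound~\eqref{e:avg_h1_bd_modsol} together with $v^{(N)}_j\leq u_j$; you instead go back to the integral equations~\eqref{e:main_ito} and use the It\=o isometry and Cauchy--Schwarz to show $X^{(N)}_j(t)\to X_j(t)$ in $L^2(\Omega)$ at \emph{every} fixed $t$, which is legitimate since the components $X_m-X^{(N)}_m$ are progressively measurable elements of $L^2([0,T]\times\Omega)$ whose norms are controlled by $k_m$ times the $H^{-1}$ distance. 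Your variant is slightly longer but more self-contained, delivering the pointwise-in-$t$ statement (and the finiteness of $E[X_j^2(t)]$ for every $t$) in one stroke rather than through the paper's a.e.-then-extend argument; the paper's route is shorter given the machinery already assembled. Either way the conclusion is the same.
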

	
	\begin{proof}
		For $i\geq1$ and $t\in[0,T]$, let $u_i(t):=E[(X_i(t))^2]$,
		$\overline u_i:=E[\overline X_i^2]$ and
		$v_i(t):=\sum_{h\geq1}\overline u_hf_{h,i}(t)$.
		
		As a first step, we prove the statement in the case that
		$\overline X\in L^2(\Omega;l^2)$ and $X$ is a proper solution.  In
		this case $\overline u\in l^1$, Lemma~\ref{l:l1_mix_fij} applies,
		and so $v$ is a solution of the forward equations in
		$L^\infty(\mathbb R_+;l^1)$. On the other hand, by
		Proposition~\ref{p:sol_2ndmom} and since $\sigma=0$, $u$ is a solution of the forward
		equations in $L^1([0,T];l^1)$. Both have initial condition
		$\overline u$, so by Theorem~\ref{t:uniqueness_linear}, $u=v$ on $[0,T]$.
		
		We turn to the general case of $X$ moderate solution. For $N\geq1$,
		let $X^{(N)}$ and $\overline X^{(N)}$ be approximating sequences. We
		can take the initial conditions $\overline X^{(N)}$ in the form presented
		in~\eqref{e:def_XNn_in_thm_exist_moderate} without loss of
		generality, given that the moderate solution is unique by
		Theorem~\ref{t:uniqueness_moderate}. Let $u_i^{(N)}(t)$,
		$\overline u_i^{(N)}$ and $v_i^{(N)}(t)$ be defined accordingly.
		
		Notice that $\overline X^{(N)}\in L^\infty(\Omega;l^2)$, so, by the first
		step,~\eqref{e:2ndmom_as_pij} holds for the approximants $X^{(N)}$
		and $\overline X^{(N)}$, for $N\geq1$
		\[
		u_j^{(N)}(t)
		=E[(X_j^{(N)}(t))^2]
		=\sum_{i\geq 1}E[(\overline{X}_i^{(N)})^2]f_{i,j}(t)
		=\sum_{i\geq 1}\overline u_i^{(N)}f_{i,j}(t)
		=v_j^{(N)}(t).
		\]
		
		Taking the limit as $N\to\infty$, $\overline u_i^{(N)}$ increases
		monotonically to $\overline u_i$ for all $i\geq1$, hence for all
		$t\in[0,T]$ the right-hand side converges monotonically to
		$v_j(t)$. As for the left-hand side, since $X_j^{(N)}\to X_j$ in
		$L^2([0,T]\times\Omega)$, it is not difficult to verify that
		$u_j^{(N)}\to u_j$ in $L^1(0,T)$. By the
		uniqueness of the limit ($L^1$ and pointwise monotone),
		the identity in~\eqref{e:2ndmom_as_pij} is proved, as well as the finiteness for
		a.e.~$t$.
		Since $u_j$ is bounded uniformly on $[0,T]$
		by~\eqref{e:avg_h1_bd_modsol} and $v^{(N)}_j\leq u_j$, the result
		extends to all $t$. 
	\end{proof}
	
	We can now link back to moderate solutions and their connection with the minimal transition function.
	
	\begin{proposition}
          The second moments of a moderate solution always solve
          system~\eqref{eq:ode2ndmom} componentwise.
	\end{proposition}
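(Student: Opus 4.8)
The plan is to obtain the system for the second moments of a general moderate solution $X$ by passing to the limit in the systems satisfied by its proper approximants, which are available from Proposition~\ref{p:sol_2ndmom}. By uniqueness of moderate solutions (Theorem~\ref{t:uniqueness_moderate}) I may fix as approximants the proper solutions $X^{(N)}$ whose initial conditions $\overline X^{(N)}$ are the truncations in~\eqref{e:def_XNn_in_thm_exist_moderate} (adding the auxiliary forced solution $Z$ of zero initial condition when $\sigma>0$), so that each approximant carries the forcing $\sigma$ and, since $(\overline X_n^{(N)})^2\uparrow\overline X_n^2$ pointwise, $\overline u_n^{(N)}:=E[(\overline X_n^{(N)})^2]\uparrow\overline u_n:=E[\overline X_n^2]$ by monotone convergence. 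Writing $u_n^{(N)}(t):=E[(X_n^{(N)}(t))^2]$, Proposition~\ref{p:sol_2ndmom} shows that each $u^{(N)}$ solves~\eqref{eq:ode2ndmom}; integrating in time, for $n\geq2$,
\begin{multline*}
u_n^{(N)}(t)=\overline u_n^{(N)}\\+\int_0^t\bigl(k_{n-1}^2u_{n-1}^{(N)}(s)-(k_{n-1}^2+k_n^2)u_n^{(N)}(s)+k_n^2u_{n+1}^{(N)}(s)\bigr)\,\mathrm{d}s,
\end{multline*}
and similarly for $n=1$ with the additional term $\sigma^2t$.

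First I would record that $u_n^{(N)}\to u_n:=E[X_n^2]$ in $L^1(0,T)$ for every $n$: this follows exactly as in the proof of Proposition~\ref{p:2ndmom_as_pij}, since convergence $X^{(N)}\to X$ in $L^2([0,T]\times\Omega;H^{-1})$ forces componentwise convergence $X_n^{(N)}\to X_n$ in $L^2([0,T]\times\Omega)$, whence a Cauchy--Schwarz estimate on $E|(X_n^{(N)})^2-X_n^2|$ gives the $L^1(0,T)$ convergence of the second moments. Because the $q$-matrix $\Pi$ is tridiagonal, only the three indices $n-1,n,n+1$ occur under the integral, so this $L^1(0,T)$ convergence suffices to pass to the limit in each integral, $\int_0^tu_m^{(N)}(s)\,\mathrm{d}s\to\int_0^tu_m(s)\,\mathrm{d}s$ for every $t$ and $m\in\{n-1,n,n+1\}$. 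Together with $\overline u_n^{(N)}\to\overline u_n$, this shows that the right-hand side above converges, for every $t\in[0,T]$, to a limit $w_n(t)$; hence the left-hand side converges pointwise, and comparison with the $L^1(0,T)$ limit yields $w_n=u_n$ almost everywhere.

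It then remains to upgrade this integral identity to the differential system. Here I would invoke the energy bound~\eqref{e:avg_h1_bd_modsol}, which gives $u_m(t)\leq k_m^2\bigl(2E\|\overline X\|_{H^{-1}}^2+2\sigma^2t\bigr)$ and hence a uniform bound of each $u_m$ on $[0,T]$; consequently the integrand is bounded and $w_n$ is Lipschitz, in particular continuous. Replacing each $u_m$ by its continuous representative $w_m$ under the integral (legitimate since they agree a.e.) makes the integrand continuous, so $w_n\in C^1$ and $w_n'$ equals the right-hand side of~\eqref{eq:ode2ndmom}, including the forcing constant $\sigma^2$ in the first equation. Identifying the second moment $u_n$ with its continuous version $w_n$, we conclude that the second moments of a moderate solution solve~\eqref{eq:ode2ndmom} componentwise.

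The main obstacle is this last regularity step: the quantities $E[X_n(t)^2]$ are a priori defined only for almost every $t$, so one must genuinely produce a continuous (indeed $C^1$) representative and check that the limiting procedure preserves both the finiteness of the moments (guaranteed by~\eqref{e:avg_h1_bd_modsol}, and by Proposition~\ref{p:2ndmom_as_pij} when $\sigma=0$) and the constant forcing term carried by the approximants. Notably, the tridiagonal structure of $\Pi$—which keeps each equation finite-dimensional on its right-hand side—is exactly what renders the passage to the limit routine once the $L^1(0,T)$ convergence of the second moments is in hand, and it also lets the argument cover the forced case $\sigma>0$ on the same footing as $\sigma=0$.
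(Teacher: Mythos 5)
Your proposal follows the paper's own proof essentially step for step: fix the truncated approximants, write \eqref{eq:ode2ndmom} for $u^{(N)}$ in integral form, use $u_n^{(N)}\to u_n$ in $L^1(0,T)$ (which the tridiagonal structure makes sufficient, since each equation involves only three components) to pass to the limit in the integrals, and then use the uniform bound coming from \eqref{e:avg_h1_bd_modsol} to upgrade the limiting integral identity to a $C^1$ statement.

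There is, however, one genuine (if small) gap at the very end, precisely at the point you flag as ``the main obstacle''. The quantity $u_n(t)=E[X_n(t)^2]$ is defined for \emph{every} $t$, because the trajectories of $X_n$ are continuous; the proposition asserts that this actual function solves the system. Your argument produces a $C^1$ function $w_n$ satisfying the equation and agreeing with $u_n$ only almost everywhere, and then ``identifies'' the two, which is not a proof: a priori $u_n$ could disagree with $w_n$ on a Lebesgue-null set of times, and then it would not be differentiable, let alone a solution. What is needed is an argument that $u_n$ itself is continuous, after which $u_n=w_n$ everywhere follows and the ``a.e.''\ can be removed. The paper supplies exactly this, deducing continuity of $u_n$ from the a.s.\ continuity of $t\mapsto X_n(t)$ together with the uniform-in-$t$ bound \eqref{e:avg_h1_bd_modsol} (one can also argue through the representation of Proposition~\ref{p:2ndmom_as_pij} in terms of the continuous transition function $f_{i,j}$). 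With that one additional step your proof is complete and coincides with the paper's.
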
	
	\begin{proof}
          Let $X$ be a moderate solution with approximating sequence
          $X^{(N)}$, $N\geq1$. Let
          $u_i^{(N)}(t):=E[(X_i^{(N)}(t))^2]$,
          as in the proof of
          Proposition~\ref{p:2ndmom_as_pij}.  We are interested
          in~\eqref{eq:ode2ndmom}, of which $u^{(N)}$ is a
          solution. In integral form we can rewrite them as
		\[\begin{split}
		u^{(N)}_1(t)
		&=u^{(N)}_1(0)+\sigma^2 t-\int_0^tk_1^2u^{(N)}_1(s)\mathrm{d}s+\int_0^tk_1^2u^{(N)}_2(s)\mathrm{d}s\\
		u^{(N)}_j(t)
		&=u^{(N)}_j(0)+\int_0^tk_{j-1}^2u^{(N)}_{j-1}(s)\mathrm{d}s-\int_0^t(k_{j-1}^2+k_j^2)u^{(N)}_j(s)\mathrm{d}s\\
		&\phantom{=}+\int_0^tk_j^2u^{(N)}_{j+1}(s)\mathrm{d}s
		\end{split}
		\]
		for $j\geq2$.
		
		Since $u^{(N)}_j\to u_j$ on $[0,T]$ a.e.~and in $L^1$, we can pass
		to the limit in the left-hand side and inside each of the integrals
		at the right-hand side, yielding that the identity holds for $u_j$
		for a.e.~$t$.  Now the continuity of the trajectories of $X_j$
		together with the bound given by equation~\eqref{e:avg_h1_bd_modsol}
		allows us to conclude that $u_j$ is continuous in $t$. Since the
		right-hand side is also continuous, we can remove the ``a.e.''~and
		the statement is proved. 
	\end{proof}
	
	Proposition~\ref{p:2ndmom_as_pij} is the key tool to control the flow of energy between
	components for moderate solutions and it ensures that nothing
	different happens with respect to proper solutions. Estimates on the minimal
	transition function $f$ will now allow us to compute different norms and
	get regularity results for the moderate solutions.
	
	\subsection{Transition function estimates}
	Let $(S,\mathcal S,\mathcal P)$ be a probability space with a
	continuous-time Markov chain on the positive integers
	$(\xi_t)_{t\geq0}$, with the property of being the minimal process
	associated with $\Pi$, that is
	\[
	f_{i,j}(t)
	=\mathcal P(\xi_t=j|\xi_0=i)
	=:\mathcal P_i(\xi_t=j)
	,\qquad i,j\geq1,\quad t\geq0,
	\]
	where $f$ is the minimal transition function.
	We will not fix the law of $\xi_0$ which will not be relevant, as we
	will be always conditioning on this random variable.
	
	The arguments in the following lemmas are very similar to the ones in Lemmas 10 to
	14 in~\cite{BarFlaMor2011AAP}, but are restated and proven again here, with the right
	generality for this paper and compatible notation.

	\begin{lemma}\label{l:Tj_mean}
		Let $f$ be the minimal transition function and $T_j$ the total time spent by $\xi$ in state $j$, for
		$j\geq1$. Let moreover $\mathcal E_i$ denote the expectation with respect to
		$\mathcal P_i$. Then,
		\begin{equation*}
		\int_0^\infty f_{i,j}(t)\mathrm{d}t
		=\mathcal E_i(T_j)
		=\frac{\lambda^{-2(i\vee j)}}{1-\lambda^{-2}}
		=\frac{k_{i\vee j}^{-2}}{1-k_1^{-2}}.
		\end{equation*}
	\end{lemma}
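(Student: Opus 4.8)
The plan is to reduce everything to first-passage and occupation computations for the embedded jump chain of $\xi$. The first equality is immediate: by Tonelli,
\[
\int_0^\infty f_{i,j}(t)\,\mathrm{d}t=\mathcal E_i\!\left[\int_0^\infty\mathbbm1_{\{\xi_t=j\}}\,\mathrm{d}t\right]=\mathcal E_i(T_j),
\]
so the real content is the evaluation of $\mathcal E_i(T_j)$. I would then decompose the occupation time of $j$ into the number of visits $N_j$ times the mean holding time per visit: since each sojourn in $j$ is an exponential of rate $q_j:=-\Pi_{j,j}$ independent of the jump chain, one has $\mathcal E_i(T_j)=\mathcal E_i(N_j)/q_j$, with $q_1=k_1^2$ and $q_j=k_{j-1}^2+k_j^2$ for $j\geq2$.

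Next I would identify the jump chain. Reading off $\Pi$, it is a nearest-neighbour walk on $\{1,2,\dots\}$ that reflects at $1$ (from $1$ it moves to $2$ with probability $1$) and that from any interior state $i\geq2$ steps down with probability $q=k_{i-1}^2/(k_{i-1}^2+k_i^2)=1/(1+\lambda^2)$ and up with probability $p=k_i^2/(k_{i-1}^2+k_i^2)=\lambda^2/(1+\lambda^2)$, independently of $i$. The first-passage facts then follow from a gambler's-ruin argument: writing $d$ for the probability of ever descending one level from an interior state, the strong Markov property gives $d=q+pd^2$, whose minimal nonnegative root is $d=q/p=\lambda^{-2}$; the companion equation for ascending one level has minimal root $1$, so upward passage is almost sure. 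Since a descent from $n\geq2$ to $n-1$ only visits states $\geq n$, the reflecting boundary never interferes and these one-step probabilities are genuinely homogeneous. Chaining them via the strong Markov property yields the reaching probability $\rho_{i,j}=1$ for $i\leq j$ and $\rho_{i,j}=\lambda^{-2(i-j)}$ for $i>j$, that is $\rho_{i,j}=\lambda^{-2((i\vee j)-j)}$, and the return probability $\ell_j$ to $j$, namely $\ell_1=\lambda^{-2}$ and $\ell_j=q+p\lambda^{-2}=2/(1+\lambda^2)$ for $j\geq2$.

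Finally I would assemble the pieces. Since $\mathcal E_i(N_j)=\rho_{i,j}/(1-\ell_j)$, we get $\mathcal E_i(T_j)=\rho_{i,j}\big/\big(q_j(1-\ell_j)\big)$, and a short computation shows the denominator collapses to the same expression for every $j\geq1$:
\[
q_j(1-\ell_j)=\lambda^{2j-2}(\lambda^2-1),
\]
the cases $j=1$ and $j\geq2$ giving the identical value. Substituting $\rho_{i,j}=\lambda^{-2((i\vee j)-j)}$ and simplifying,
\[
\mathcal E_i(T_j)=\frac{\lambda^{-2((i\vee j)-j)}}{\lambda^{2j-2}(\lambda^2-1)}=\frac{\lambda^{-2(i\vee j)+2}}{\lambda^2-1}=\frac{\lambda^{-2(i\vee j)}}{1-\lambda^{-2}},
\]
which is the claimed formula, and equals $k_{i\vee j}^{-2}/(1-k_1^{-2})$ because $k_n=\lambda^n$.

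The step I expect to be the main obstacle is the first-passage analysis: one must take the minimal roots of the quadratics (the probabilistically correct choice, reflecting the transience of the upward-drifting chain) and check carefully that the reflecting boundary at $1$ does not spoil the homogeneity of the one-step descent probabilities, using that a descent from $n$ to $n-1$ confines the walk to states $\geq n$ until the hitting time. I note that one could instead try the analytic route of verifying that the proposed $\mathcal E_i(T_j)$ solves $\Pi G=-I$ columnwise (which it does, by a three-term telescoping check), but turning that into a proof would require a uniqueness statement for the resolvent equation, delicate here precisely because the backward equations for $\Pi$ admit many solutions; the probabilistic route sidesteps this by computing the minimal process's occupation time directly.
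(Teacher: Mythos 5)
Your proof is correct and follows essentially the same route as the paper's: both reduce $\mathcal E_i(T_j)$ to (expected number of visits by the embedded jump chain)$/(-\Pi_{j,j})$, identify the jump chain as a nearest-neighbour walk reflected at $1$ with upward drift, and compute the reaching and return probabilities by a gambler's-ruin argument (your $\rho_{i,j}$ and $\ell_j$ are just $1-\pi_{i,j}$ and one minus the escape probability in the paper's notation). Your treatment of the first-passage quadratics and of the boundary is in fact slightly more explicit than the paper's, which leaves that part as an exercise.
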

	
	\begin{proof}
		The first equality is trivial since both terms are equal to
		\[
		\int_0^\infty\mathcal E_i(\mathbbm 1_{\xi_t=j})\mathrm{d}t.
		\]
		We turn to the second one. Let $(\tau_n)_{n\geq0}$ be the jumping times 
		of $\xi$, that is $\tau_0:=0$ and
		\[
		\tau_{n+1}:=\inf\{t>\tau_n:\xi_t\neq\xi_{\tau_n}\}
		,\qquad n\geq0.
		\]
		Let $(\zeta_n)_{n\geq0}$ denote the discrete-time Markov chain
		embedded in $\xi$, that is $\zeta_n:=\xi_{\tau_n}$ for $n\geq0$. For
		every state $j\geq1$, let $V_j$ denote the total number of visits to
		$j$,
		\[
		V_j:=\sum_{n\geq0}\mathbbm1_{\zeta_n=j}.
		\]
		Then, by the strong Markov property and conditioning on the initial state $\xi_0$, $V_j$ is a mixture of a Dirac $\delta_0$ and a geometric random variable.  Specifically, let
		\[
		\pi_{i,j}:=\mathcal P_i(\zeta_n\neq j,\ \forall n\geq0),
		\]
		then
		\[
		V_j(\mathcal
		P_i)=\pi_{i,j}\delta_0+(1-\pi_{i,j})\text{Geom}(\mathcal P_j(\zeta_1=j+1)\pi_{j+1,j}),
		\]
		where we used the fact that the Markov chain is nearest-neighbour
		and that $\pi_{j-1,j}=0$.
		
		For each visit of $\xi$ to site $j$, the time spent there is an exponential random variable with rate $-\Pi_{j,j}$ (recall that $\Pi$ has a negative diagonal). By the strong
		Markov property again, these variables are independent among them and with $V_j$.
		Consequently, we only have to compute
		\[
		\mathcal E_i(T_j)
		=\frac{\mathcal E_i(V_j)}{-\Pi_{j,j}}
		=\frac1{-\Pi_{j,j}}\cdot\frac{1-\pi_{i,j}}{\mathcal P_j(\zeta_1=j+1)\pi_{j+1,j}}
		=\frac1{\Pi_{j,j+1}}\cdot\frac{1-\pi_{i,j}}{\pi_{j+1,j}}.
		\]
		
		Notice that for $j\geq2$,
		\[
		\mathcal P_j(\zeta_1=j+1)
		=\frac{\Pi_{j,j+1}}{-\Pi_{j,j}}
		=\frac{k_j^2}{k_j^2+k_{j-1}^2}
		=\frac{\lambda^{2j}}{\lambda^{2j}+\lambda^{2(j-1)}}
		=\frac1{1+\lambda^{-2}}
		=:\theta>\frac12,
		\]
		independent of $j$, while for $j=1$ the same quantity is 1.  Then
		$\zeta$ is a simple random walk on the positive integers, reflected
		in 1, with positive drift $2\theta-1$. It is now an exercise to
		prove that
		\[
		\pi_{i,j}=
		\begin{cases}
		1-\bigl(\frac{1-\theta}\theta\bigr)^{i-j} & i>j \\
		0 & i\leq j
		\end{cases},
		\]
		that is, $\pi_{i,j}=1-\lambda^{-2[(i-j)\vee0]}$.   Substituting, we can conclude
		\[
		\mathcal E_i(T_j)
		=\frac1{k_j^2}\cdot\frac{\lambda^{-2[(i-j)\vee0]}}{\frac{2\theta-1}\theta}
		=\frac{\lambda^{-2(i\vee j)}}{2-1/\theta}
		=\frac{k_{i\vee j}^{-2}}{1-k_1^{-2}}
		.\qedhere
		\]
	\end{proof}
	
	When the chain starts from $i$, all states $j\geq i$ are visited with
	probability one, and the times $T_j$ have exponential distribution. In
	particular the following holds.
	\begin{corollary}
		For $j\geq1$ the law of $T_j$, conditional on $\xi_0=1$, is
		exponential with mean $\frac{\lambda^{-2j}}{1-\lambda^{-2}}$.
	\end{corollary}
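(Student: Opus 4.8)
The plan is to read the mean straight off Lemma~\ref{l:Tj_mean} and to spend the actual work on showing that the law is genuinely exponential. Indeed, setting $i=1$ in Lemma~\ref{l:Tj_mean} and using $1\vee j=j$ for $j\geq1$ gives $\mathcal E_1(T_j)=\frac{\lambda^{-2j}}{1-\lambda^{-2}}$, so once the exponential shape is established the parameter is forced to be the reciprocal of this mean.

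First I would reuse the structural decomposition of $T_j$ already obtained inside the proof of Lemma~\ref{l:Tj_mean}. There $T_j$ is the sum of the sojourn times of $\xi$ at $j$, one for each of the $V_j$ visits: these sojourn times are i.i.d.\ exponentials of rate $-\Pi_{j,j}$, independent of the visit count $V_j$. The key simplification under the conditioning $\xi_0=1$ is that $\pi_{1,j}=1-\lambda^{-2[(1-j)\vee0]}=0$ for every $j\geq1$, so state $j$ is reached almost surely and the mixture $V_j(\mathcal P_1)$ drops its $\delta_0$ component. Hence $V_j$ is a genuine geometric random variable on $\{1,2,\dots\}$ with some success parameter $p\in(0,1)$, independent of the sojourn times.

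The core step is the classical fact that a geometric sum of i.i.d.\ exponentials is again exponential, which I would verify through Laplace transforms. Writing $\mu:=-\Pi_{j,j}$ and $G(z)=pz/(1-(1-p)z)$ for the probability generating function of $V_j$, independence yields
\[
\mathcal E_1\bigl[e^{-sT_j}\bigr]
=G\!\left(\frac{\mu}{\mu+s}\right)
=\frac{p\mu}{s+p\mu}
,\qquad s\geq0,
\]
which is exactly the Laplace transform of an exponential law of rate $p\mu$. By injectivity of the Laplace transform, $T_j$ conditional on $\xi_0=1$ is exponential; comparing means with Lemma~\ref{l:Tj_mean} then pins the rate to $p\mu=(1-\lambda^{-2})\lambda^{2j}$, that is, mean $\frac{\lambda^{-2j}}{1-\lambda^{-2}}$.

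There is no serious obstacle here: the argument is routine once Lemma~\ref{l:Tj_mean} is available. The only points needing a little care are recognising that the conditioning $\xi_0=1$ removes the atom at $0$ in $V_j$ (without which one would land on a mixture of $\delta_0$ and an exponential, not a pure exponential), and the bookkeeping in the composition $G(\mu/(\mu+s))$. The exact value of $p$ need not be computed on its own, since it is already determined by the mean furnished by Lemma~\ref{l:Tj_mean}.
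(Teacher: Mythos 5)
Your argument is correct and is exactly the natural fleshing-out of the paper's reasoning: the paper offers no proof of this corollary beyond the remark that states $j\geq i$ are visited almost surely (so the $\delta_0$ atom in $V_j$ disappears) and that a geometric number of i.i.d.\ exponential sojourns yields an exponential total time. Your Laplace-transform verification of that last classical fact, together with reading the mean off Lemma~\ref{l:Tj_mean}, supplies precisely the details the paper leaves implicit.
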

	
	The minimal process $\xi_t$ is uniquely defined up to the time of the
	first infinity (also known as the explosion time),
	$\tau:=\sum_{j\geq1}T_j$ and after that one can assume that it
	rests in an absorbing boundary state $b$ outside the usual state-space
	of the positive integers.
	To estimate the total energy of a solution it will be important to
	deal with $\sum_{j\geq1}f_{i,j}(t)\leq1$ which will be strictly less
	than 1 when there is a positive probability that the chain has reached
	$b$.
	\begin{equation}
	\label{e:massloss_mc}
	\sum_{j\geq1}f_{i,j}(t)
	=\sum_{j\geq1}\mathcal P_i(\xi_t=j)
	=\mathcal P_i\Bigl(\sum_{j\geq1}T_j>t\Bigr)
	=:\mathcal P_i(\tau>t).
	\end{equation}
	
	\begin{lemma}\label{l:exists_t1}
		There exists a time $t>0$ such that $\mathcal{P}_1(\tau>t)<1$.
	\end{lemma}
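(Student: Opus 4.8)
The plan is to show that the explosion time $\tau=\sum_{j\geq1}T_j$ has finite expectation when the chain starts from state $1$, and then to conclude by Markov's inequality. The crucial observation is that, thanks to the Corollary just above, the expected holding times $\mathcal E_1(T_j)$ are summable.

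First I would recall that, conditional on $\xi_0=1$, each $T_j$ is exponentially distributed with mean $\frac{\lambda^{-2j}}{1-\lambda^{-2}}$. Since the $T_j$ are nonnegative, Tonelli's theorem (equivalently, monotone convergence) lets me exchange the expectation with the infinite sum, so that
\[
\mathcal E_1(\tau)
=\mathcal E_1\Bigl(\sum_{j\geq1}T_j\Bigr)
=\sum_{j\geq1}\mathcal E_1(T_j)
=\frac1{1-\lambda^{-2}}\sum_{j\geq1}\lambda^{-2j}
=\frac{\lambda^{-2}}{(1-\lambda^{-2})^2}.
\]
Because $\lambda>1$, the geometric series converges and $\mathcal E_1(\tau)<\infty$. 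This is the heart of the matter: the rapidly decaying mean holding times force the chain to explode in finite expected time.

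With finiteness of the mean in hand, the conclusion is immediate by Markov's inequality. For any $t>\mathcal E_1(\tau)$ we have
\[
\mathcal P_1(\tau>t)
\leq\frac{\mathcal E_1(\tau)}{t}
<1,
\]
which is exactly the assertion of the lemma. (Alternatively, one could note that finiteness of the mean gives $\tau<\infty$ $\mathcal P_1$-almost surely, whence $\mathcal P_1(\tau>t)\to\mathcal P_1(\tau=\infty)=0$ as $t\to\infty$, so the bound holds for all large $t$.)

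I do not expect a genuine obstacle here; the argument is short once the Corollary is invoked. The only point requiring a word of justification is the interchange of sum and expectation in computing $\mathcal E_1(\tau)$, but this is automatic for the nonnegative random variables $T_j$ by Tonelli, so no independence or integrability hypothesis beyond what is already available is needed.
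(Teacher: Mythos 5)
Your proof is correct, but it takes a genuinely different route from the paper. The paper fixes weights $\vartheta_n=\alpha n k_n^{-2}$ with $\sum_n\vartheta_n=1$, writes $\{\tau>t\}\subset\bigcup_n\{T_n>\vartheta_n t\}$, and uses the exponential law of each $T_n$ (conditional on $\xi_0=1$) to bound the union by a convergent geometric-type series, namely $\mathcal P_1(\tau>t)\leq\bigl(\exp\{(\lambda^2-1)^3\lambda^{-4}t\}-1\bigr)^{-1}$, which is $<1$ for $t$ large enough. You instead use only the means from Lemma~\ref{l:Tj_mean}: Tonelli gives $\mathcal E_1(\tau)=\sum_j\mathcal E_1(T_j)=\lambda^{-2}(1-\lambda^{-2})^{-2}<\infty$, and Markov's inequality finishes. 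Your argument is shorter, needs less information (only the means, not the full exponential distributions, and no independence), and in fact proves the stronger statement that $\tau<\infty$ $\mathcal P_1$-almost surely. What the paper's computation buys is an explicit exponentially decaying tail bound in $t$, versus your $O(1/t)$ from Markov; but that extra strength is not exploited later, since Lemma~\ref{l:ptauless1} only requires the inequality $\mathcal P_1(\tau>\tilde t)<1$ at a single time to propagate it to all $t>0$. So your proof fully suffices for the role this lemma plays.
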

	
	\begin{proof}
		Let us start the proof by noticing that we have a lower bound on $\mathcal{P}(\tau>t)$ given by $e^{-(\lambda^2-1)^2t}$. 
		We can now introduce the sequence $\vartheta_n = \alpha n k_n^{-2}$, where $\alpha = (\lambda^2-1)^2\lambda^{-2}$ is a constant. Observe that $\sum_{n=1}^{\infty}\vartheta_n=1$. Now we have
		\begin{equation*}
		\begin{split}
		\mathcal{P}(\tau>t) =&\mathcal{P}\Big(\sum_n T_n>\sum_n\vartheta_n t\Big)\\
		\leq & \mathcal{P}\Big(\bigcup_{n=1}^{\infty}\{T_n>\vartheta_n t\}\Big)\\
		\leq& \sum_{n=1}^{\infty}\mathcal{P}(T_n>\vartheta_n t)\\
		=&\sum_{n=1}^{\infty}\exp\Big\{-\frac{1-\lambda^{-2}}{\lambda^{-2n}}\vartheta_n t\Big\}\\
		=&\sum_{n=1}^{\infty}\exp\Big\{-\frac{(\lambda^2-1)^3}{\lambda^4}nt\Big\}\\
		=&\Big(\exp\Big\{\frac{(\lambda^2-1)^3}{\lambda^4}t\Big\}-1\Big)^{-1},
		\end{split}
		\end{equation*}
		and any $t>\log2\cdot \lambda^4(\lambda^2-1)^{-3}$ satisfies our claim. 
	\end{proof}
	
	In the following lemma, we show that it is enough to show the strict inequality $\mathcal{P}(\tau>t)<1$ for just any single time $t$, and then it holds for all positive times.
	
	\begin{lemma}\label{l:ptauless1}
		Assume that there exists a time $\tilde{t}$ such that $\mathcal{P}(\tau>\tilde{t})<1$. Then $\mathcal{P}(\tau>t)<1$ for all $t>0$.
	\end{lemma}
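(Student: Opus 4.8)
The plan is to work with the survival function $g(t):=\mathcal P(\tau>t)=\mathcal P_1(\tau>t)=\sum_{j\geq1}f_{1,j}(t)$ (recall~\eqref{e:massloss_mc}, with the chain started at state $1$, so that $\mathcal P=\mathcal P_1$), together with its analogues $h_j(t):=\mathcal P_j(\tau>t)=\sum_{k\geq1}f_{j,k}(t)$ for the other starting states. I would prove the dichotomy that the set $A:=\{t>0:g(t)=1\}$ is either empty or all of $(0,\infty)$. Since the hypothesis gives $g(\tilde t)<1$, i.e.\ $\tilde t\notin A$, this forces $A=\emptyset$, which is precisely the claim $g(t)<1$ for all $t>0$.

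The heart of the argument is an \emph{escape} estimate: if $t_0\in A$, i.e.\ $\mathcal P_1(\tau\le t_0)=0$, then $h_j(s)=1$ for every $j\geq1$ and every $s\in(0,t_0)$. To see this, fix such $j$ and $s$ and set $\epsilon:=t_0-s>0$. The chain is irreducible on the positive integers (it moves up from every state and down from every state $\geq2$), so the minimal transition function satisfies $f_{1,j}(\epsilon/2)>0$, whence the hitting time $H_j:=\inf\{t\ge0:\xi_t=j\}$ obeys $\mathcal P_1(H_j<\epsilon)>0$. On $\{H_j<\epsilon\}$ the strong Markov property presents the post-$H_j$ trajectory as an independent minimal chain started at $j$, and if that copy explodes within time $s$ then $\tau\le H_j+s<\epsilon+s=t_0$. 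Hence
\[
0=\mathcal P_1(\tau<t_0)\ge\mathcal P_1(H_j<\epsilon)\,\mathcal P_j(\tau\le s),
\]
and since the first factor is positive we get $\mathcal P_j(\tau\le s)=0$, that is $h_j(s)=1$. Taking $j=1$ already shows $(0,t_0]\subseteq A$, so $A$ is a down-set.

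To push $A$ upward I would use the Chapman--Kolmogorov identity for the minimal transition function: for $0<s<t$, summing $f_{1,j}(t)=\sum_k f_{1,k}(t-s)f_{k,j}(s)$ over $j$ gives
\[
g(t)=\sum_{k\geq1}f_{1,k}(t-s)\,h_k(s).
\]
If $t_0\in A$ and $s\in(0,t_0)$, the escape estimate yields $h_k(s)=1$ for all $k$, so $g(t)=\sum_k f_{1,k}(t-s)=g(t-s)$. For any $t<2t_0$, choosing $s\in(t-t_0,t_0)$ (nonempty since $t<2t_0$) makes $t-s\in(0,t_0)\subseteq A$, whence $g(t)=g(t-s)=1$; thus $(0,2t_0)\subseteq A$. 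Consequently $\alpha:=\sup A$ satisfies $\alpha\ge2t_0$ for every $t_0\in A$, hence $\alpha\ge2\alpha$; if $A\ne\emptyset$ then $\alpha>0$, forcing $\alpha=\infty$, and as $A$ is a down-set we conclude $A=(0,\infty)$. This is the announced dichotomy, and combined with $\tilde t\notin A$ it gives $A=\emptyset$.

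The step I expect to require the most care is the escape estimate: one must justify the strict positivity $f_{1,j}(\epsilon/2)>0$ (standard irreducibility of the minimal chain, see~\cite{Anderson}) and, above all, apply the strong Markov property at $H_j$ correctly, keeping track of the absorbing state $b$ so that the possible loss of mass is handled consistently in the Chapman--Kolmogorov bookkeeping $g(t)=\sum_k f_{1,k}(t-s)h_k(s)$. Everything else reduces to the elementary down-set/doubling argument above once the escape estimate is in place.
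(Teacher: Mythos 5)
Your proof is correct, and its skeleton --- a Chapman--Kolmogorov splitting of the survival probability at an intermediate time, propagation of the property $\mathcal P_j(\tau>s)=1$ to all starting states $j$, and an iteration that drives the contradiction --- is the same as the paper's. The genuine difference is in how the central propagation step is obtained. The paper reads it off from equality in the Chapman--Kolmogorov sum: from $1=\sum_n\mathcal P(\tau>s\mid\xi_0=n)\,\mathcal P(\xi_{t-s}=n)\leq\sum_n\mathcal P(\xi_{t-s}=n)\leq1$ it concludes that every conditional survival probability appearing with positive weight equals $1$. You instead prove an ``escape estimate'' by applying the strong Markov property at the hitting time $H_j$ and invoking irreducibility to get $\mathcal P_1(H_j<\epsilon)>0$; this costs more machinery (positivity of $f_{1,j}$ for all positive times, strong Markov for the minimal process) but makes explicit the positivity that the paper's ``for all $n$'' silently relies on, since the conclusion only covers those $n$ with $\mathcal P(\xi_{t-s}=n)>0$ and one needs irreducibility to know that this is every $n$. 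Likewise, your doubling argument ($t_0\in A\Rightarrow(0,2t_0)\subseteq A$, hence $\sup A$ is $0$ or $\infty$) is a clean formalization of the paper's terser ``we can keep repeating it''. Both arguments are sound; yours is longer but leaves fewer details to the reader.
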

	\begin{proof}
		First of all, we notice that $\mathcal{P}(\tau>t)\leq \mathcal{P}(\tau>s)$, 
		for all $0<s\leq t$: we can read this from the Chapman-Kolmogorov equations,
		\[
		\mathcal{P}(\tau>t) = \sum_{n=1}^{\infty}\mathcal{P}(\tau>t-s|\xi_0=n)\mathcal{P}(\xi_s=n)\leq \sum_{n=1}^{\infty}\mathcal{P}(\xi_s=n) = \mathcal{P}(\tau>s).
		\]
		This tells us that the map $\mathcal{P}(\tau>t)$ is not increasing in $t$, and in particular it is always less than 1 for $t\geq\tilde{t}$.
		Now suppose that there exists a $t<\tilde{t}$ such that $\mathcal{P}(\tau>t)=1$. Then, for any $0<s<t$, 
		\begin{align*}
		1= \mathcal{P}(\tau>t) & = \sum_{n=1}^{\infty}\mathcal{P}(\tau>s|\xi_0=n)\mathcal{P}(\xi_{t-s}=n)\\ 
		&\leq \sum_{n=1}^{\infty}\mathcal{P}(\xi_{t-s}=n) = \mathcal{P}(\tau>t-s),
		\end{align*}
		but the last term is still a probability, so all the terms must be equal to 1. In particular, this means that $\mathcal{P}(\tau>s|\xi_0=n)=1$ for all $n$. 
		
		Finally, we consider 
		\[
		1>\mathcal{P}(\tau>\tilde{t}) = \sum_{n=1}^{\infty}\mathcal{P}(\tau>s|\xi_0=n)\mathcal{P}(\xi_{\tilde{t}-s}=n) = \mathcal{P}(\tau>\tilde{t}-s),
		\]
		and we can keep repeating it to show that for all $t<\tilde{t}$, $\mathcal{P}(\tau>t)<1$. 
	\end{proof}
	
	The following result tells us that by considering processes conditioned to a staring point in 1, we actually took the worst case scenario. The proof is a standard exercise in continuous time Markov chains.
	\begin{lemma}\label{l:l14bfm2011AAP}
		For all $j\geq1$, $\mathcal{P}_j(\tau>t)\leq \mathcal{P}_1(\tau>t)$.
	\end{lemma}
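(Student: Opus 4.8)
The plan is to exploit the nearest-neighbour structure of $\xi$ together with the strong Markov property, reducing the comparison to a statement about first hitting times. Since the embedded walk $\zeta$ is a reflected random walk with strictly positive drift, a chain started at $1$ must pass through state $j$ before it can escape to infinity; heuristically this makes its explosion time larger than that of a chain already started at $j$, which is exactly the desired inequality between the survival probabilities $\mathcal P_i(\tau>t)$ defined in~\eqref{e:massloss_mc}.

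First I would record that, starting from $1$, the state $j$ is reached in finite time almost surely. This is read off from the proof of Lemma~\ref{l:Tj_mean}, where $\pi_{i,j}=\mathcal P_i(\zeta_n\neq j\ \forall n\geq0)=1-\lambda^{-2[(i-j)\vee0]}$; in particular $\pi_{1,j}=0$ for every $j\geq1$. Writing $H_j:=\inf\{t\geq0:\xi_t=j\}$ for the first hitting time of $j$, we thus have $H_j<\infty$ $\mathcal P_1$-almost surely. Moreover $H_j<\tau$: the time $H_j$ is reached after finitely many jumps, with $\xi_{H_j}=j$ a finite state, so the chain has not yet exploded, whereas $\tau=\sum_{m\geq1}T_m$ is the total lifetime.

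The main step is then the decomposition of the explosion time at $H_j$. Applying the strong Markov property at the almost surely finite stopping time $H_j$, the post-$H_j$ process $(\xi_{H_j+s})_{s\geq0}$ is a copy of the minimal chain started at $j$, so its own lifetime $\tau'$ is distributed, under $\mathcal P_1$, exactly as $\tau$ is under $\mathcal P_j$. Splitting the total lifetime before and after $H_j$ gives the pathwise identity $\tau=H_j+\tau'$, and since $H_j\geq0$ we obtain $\tau\geq\tau'$ pointwise, whence for every $t>0$
\[
\mathcal P_1(\tau>t)=\mathcal P_1(H_j+\tau'>t)\geq\mathcal P_1(\tau'>t)=\mathcal P_j(\tau>t).
\]
For $j=1$ the statement is the trivial equality, with $H_1=0$.

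The only point requiring care is justifying the decomposition $\tau=H_j+\tau'$ and the distributional identity between $\tau'$ and $\tau$ under $\mathcal P_j$: one must verify that $H_j$ is a genuine stopping time that is almost surely finite and strictly below the explosion time, so that the strong Markov property for the minimal process legitimately applies before explosion. Both facts rest on $\pi_{1,j}=0$, i.e.\ on the positive drift of the embedded walk already exploited in Lemma~\ref{l:Tj_mean}; everything else is the standard regenerative structure of continuous-time Markov chains.
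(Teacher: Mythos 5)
Your proof is correct. The paper gives no argument for this lemma (it is dismissed as ``a standard exercise in continuous time Markov chains''), and what you supply --- a.s.\ finiteness of the hitting time $H_j$ under $\mathcal P_1$ via $\pi_{1,j}=0$ and the nearest-neighbour structure, the pathwise decomposition $\tau=H_j+\tau'$ from the strong Markov property at $H_j<\tau$, and the resulting stochastic domination $\mathcal P_1(\tau>t)\geq\mathcal P_j(\tau>t)$ --- is precisely that standard argument, correctly carried out.
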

	
	By combining Lemmas~\ref{l:exists_t1}, \ref{l:ptauless1}
	and~\ref{l:l14bfm2011AAP}, we have that
	\begin{equation}
	\label{e:noinf_notsure}
	\mathcal P_i(\tau>t)<1
	,\qquad i\geq1,\quad t>0
	\end{equation}
	
	\subsection{Moderate solutions are finite energy}
	
	We get now to the first application of the results in the previous subsection: we use them in the following
	proposition, to show anomalous dissipation of average energy for
	moderate solutions starting from finite energy initial conditions. The
	latter hypothesis will be dropped afterwards.
	
	\begin{proposition}\label{p:anom_diss}
		Let $\overline{X}\in L^2(\Omega;l^2)$ and let $X$ be the moderate
		solution with initial condition $\overline{X}$. Then $X$ is a finite
		energy solution. Moreover, if $\sigma=0$ and $\overline X\neq0$,
		then for all $t\in(0,T]$, we have
		$\|X(t)\|_{L^2(\Omega;l^2)}<\|\overline{X}\|_{L^2(\Omega;l^2)}$.
	\end{proposition}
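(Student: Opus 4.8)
The plan is to reduce everything to the forcing-free case $\sigma=0$ and then read off both the finite-energy bound and the strict energy decay from the representation of the second moments through the minimal transition function. For general $\sigma$ I would first split the moderate solution by linearity as $X=Z+Y$, where $Z$ is the (proper, hence moderate) solution of the $\sigma$-forced system with zero initial datum and $Y$ is the moderate solution of the $\sigma=0$ system with initial condition $\overline X$; uniqueness (Theorem~\ref{t:uniqueness_moderate}) guarantees $X=Z+Y$, exactly as in the existence proof. The forcing part $Z$ is harmless: Theorem~\ref{t:existence_l2init} applied to the zero initial datum, together with the energy bound~\eqref{e:ps_en_bd_avg}, gives $E\|Z(t)\|_{l^2}^2\leq\sigma^2t$, which is integrable on $[0,T]$, so $Z\in L^2([0,T]\times\Omega;l^2)$. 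Thus the finite-energy claim for $X$ follows once it is established for the $\sigma=0$ solution $Y$, and the strict-decay claim is precisely a statement about $Y$ (the hypothesis $\sigma=0$).

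For $\sigma=0$ the core step is Proposition~\ref{p:2ndmom_as_pij}, which gives $E[Y_j(t)^2]=\sum_{i\geq1}E[\overline X_i^2]f_{i,j}(t)$ (note $\overline X\in L^2(\Omega;l^2)\subseteq L^2(\Omega;H^{-1})$, so the proposition applies). Summing over $j$ and interchanging the two nonnegative, hence Tonelli-justified, series, I would write
\[
E\|Y(t)\|_{l^2}^2=\sum_{j\geq1}E[Y_j(t)^2]=\sum_{i\geq1}E[\overline X_i^2]\sum_{j\geq1}f_{i,j}(t)=\sum_{i\geq1}E[\overline X_i^2]\,\mathcal P_i(\tau>t),
\]
where the last equality is~\eqref{e:massloss_mc}. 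Since $\mathcal P_i(\tau>t)\leq1$, this is bounded by $\sum_{i\geq1}E[\overline X_i^2]=\|\overline X\|_{L^2(\Omega;l^2)}^2$ uniformly in $t$; integrating over $[0,T]$ yields $Y\in L^2([0,T]\times\Omega;l^2)$, so $X$ is a finite energy solution.

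For the strict inequality when $\sigma=0$ and $\overline X\neq0$, I would exploit the strict loss of mass~\eqref{e:noinf_notsure}: for every $t>0$ and every $i\geq1$ one has $\mathcal P_i(\tau>t)<1$. Since $\overline X\neq0$ in $L^2(\Omega;l^2)$, there is at least one index $i_0$ with $E[\overline X_{i_0}^2]>0$, and for that term $E[\overline X_{i_0}^2]\,\mathcal P_{i_0}(\tau>t)<E[\overline X_{i_0}^2]$, while every other term satisfies the weak inequality; summing the displayed identity then gives $E\|X(t)\|_{l^2}^2<\|\overline X\|_{l^2}^2$ for all $t\in(0,T]$.

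The main obstacle is not in these manipulations, which are routine once the representation is in hand, but in the fact that all the substance has been pushed into the earlier results: the identity of Proposition~\ref{p:2ndmom_as_pij} (resting on the uniqueness Theorem~\ref{t:uniqueness_linear} and the minimal-transition-function machinery) and, crucially, the anomalous-dissipation statement~\eqref{e:noinf_notsure}, i.e.\ that the associated Markov chain reaches the absorbing boundary with positive probability in arbitrarily short time. The only points needing a little care are the legitimacy of the $X=Z+Y$ splitting and the interchange of summation, both of which are clean precisely because second moments and transition probabilities are nonnegative.
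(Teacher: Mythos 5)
Your proposal is correct and follows essentially the same route as the paper: the same decomposition of $X$ into a forced proper solution with zero initial datum plus an unforced moderate solution, the same representation $E\|X(t)\|_{l^2}^2=\sum_i E[\overline X_i^2]\,\mathcal P_i(\tau>t)$ via Proposition~\ref{p:2ndmom_as_pij} and~\eqref{e:massloss_mc}, and the same use of the strict inequalities~\eqref{e:noinf_notsure} for the contraction. You merely spell out the Tonelli interchange and the integrability of the forced part, which the paper leaves implicit.
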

	\begin{proof}
		We start from the second statement, so let $\sigma=0$ and fix
		$t>0$. We can rewrite the energy at time $t$ thanks to
		Proposition~\ref{p:2ndmom_as_pij} and
		equations~\eqref{e:2ndmom_as_pij} and~\eqref{e:massloss_mc}, as
		\[
		E[\|X(t)\|^2_{l^2}]
		= \sum_{j\geq1} E[X_j^2(t)]
		=\sum_{j\geq1}\sum_{i\geq1}E[\overline X_i^2]f_{i,j}(t)
		=\sum_{i\geq1}E[\overline X_i^2]\mathcal P_i(\tau>t).
		\]
		Then we can exploit the strict inequalities~\eqref{e:noinf_notsure}
		for all $i\geq1$ to get the result.
		
		Turning to the first statement, by uniqueness and linearity, we can decompose $X$ as the sum of a proper solution with zero initial condition and a moderate solution with zero forcing. Applying what we proved above, bound~\eqref{e:ps_en_bd_avg} and triangle inequality, yields the result.
	\end{proof}
	
	The next result states formally that moderate solutions are
        ``almost'' finite energy solution, in the sense that whatever
        the initial condition, they jump into $l^2$ immediately (in
        fact they jump into $H^{1^-}$).
	
	\begin{theorem}\label{thm:regularity_moderate}
		Let $\sigma=0$. Let $X$ be the moderate solution with initial
		condition $\overline X\in L^2(\Omega;H^\alpha)$, with
		$\alpha\geq-1$. Then $X\in L^2([0,T]\times\Omega;H^\beta)$ for all
		$\beta<\min(1,\alpha+1)$, with norm bounded by a constant depending
		on $\beta$ and the law of $\overline X$, but not on $T$,
		\begin{equation}
		\label{e:Hbeta_const_bd}
		\|X\|_{L^2([0,T]\times\Omega;H^{\beta})}
		\leq C_{\beta,\mathcal L_{\overline X}}
		<\infty.
		\end{equation}
		In particular:
		\begin{enumerate}[i.]
			\item\label{i:a0} for all initial conditions,
			$X\in L^2([0,T]\times\Omega;H^{\beta})$ for all
			$\beta<0$;
			\item\label{i:a1} if $\overline X\in L^2(\Omega;H^{\alpha})$
			for $\alpha>-1$, then $X\in L^2([0,T]\times\Omega;l^2)$;
			\item\label{i:a2} if $\overline X\in L^2(\Omega;l^2)$, then
			$X\in L^2([0,T]\times\Omega;H^\beta)$ for all $\beta<1$;
			\item\label{i:a3} for all initial conditions,
			$X\in L^2([\epsilon,T]\times\Omega;H^{\beta})$ for all
			$\epsilon>0$ and $\beta<1$.
		\end{enumerate}
	\end{theorem}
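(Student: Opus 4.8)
The plan is to reduce the whole statement to the minimal transition function via Proposition~\ref{p:2ndmom_as_pij} and then estimate two geometric series. First I would write, for the moderate solution with $\sigma=0$,
\[
\|X\|_{L^2([0,T]\times\Omega;H^\beta)}^2
=\int_0^T\sum_{j\geq1}k_j^{2\beta}E[X_j^2(t)]\,\mathrm dt
=\int_0^T\sum_{j\geq1}k_j^{2\beta}\sum_{i\geq1}E[\overline X_i^2]f_{i,j}(t)\,\mathrm dt,
\]
and, since every term is nonnegative, use Tonelli to move the time integral inside the double sum. The crucial point is that the resulting $\int_0^Tf_{i,j}(t)\,\mathrm dt$ is bounded, uniformly in $T$, by $\int_0^\infty f_{i,j}(t)\,\mathrm dt=k_{i\vee j}^{-2}(1-k_1^{-2})^{-1}$, which is exactly the content of Lemma~\ref{l:Tj_mean}. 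This is what produces the $T$-independent constant in~\eqref{e:Hbeta_const_bd} and reduces the statement to showing that
\[
A_\beta(i):=\sum_{j\geq1}k_j^{2\beta}k_{i\vee j}^{-2}
\leq C_\beta\,k_i^{2\alpha}
\qquad\text{for all }i\geq1.
\]

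Then I would split $A_\beta(i)$ according to $j\leq i$ and $j>i$. Recalling $k_n=\lambda^n$, the tail $\sum_{j>i}\lambda^{2(\beta-1)j}$ is a convergent geometric series precisely when $\beta<1$ and equals a constant times $\lambda^{2(\beta-1)i}$; this is where the cap $\beta<1$ enters. The head $\lambda^{-2i}\sum_{j=1}^i\lambda^{2\beta j}$ is treated according to the sign of $\beta$: for $\beta>0$ it is $\lesssim\lambda^{2(\beta-1)i}$, for $\beta<0$ it is $\lesssim\lambda^{-2i}$, and for $\beta=0$ it equals $i\lambda^{-2i}$. In every case one obtains $A_\beta(i)\lesssim\lambda^{2(\beta-1)i}$ when $\beta\geq0$ and $A_\beta(i)\lesssim\lambda^{-2i}$ when $\beta\leq0$; the strict inequality $\beta<\alpha+1$ (so that $\beta-1<\alpha$) together with $\alpha\geq-1$ then gives $A_\beta(i)\leq C_\beta\lambda^{2\alpha i}$, the stray polynomial factor at $\beta=0$ being absorbed because $\beta-1<\alpha$ strictly. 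Summing against $E[\overline X_i^2]$ yields $\|X\|_{L^2([0,T]\times\Omega;H^\beta)}^2\leq C_\beta\|\overline X\|_{L^2(\Omega;H^\alpha)}^2$, which is~\eqref{e:Hbeta_const_bd}.

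The consequences~\eqref{i:a0}--\eqref{i:a2} are immediate specializations: $\alpha=-1$ gives $\beta<0$, $\alpha>-1$ makes $\beta=0$ admissible (so $X\in L^2([0,T]\times\Omega;l^2)$), and $\alpha=0$ gives $\beta<1$. Item~\eqref{i:a3} is the only one that is not a direct corollary, since with a general initial condition we only know $\alpha=-1$, which forces $\beta<0$ on $[0,T]$. To reach $\beta<1$ on $[\epsilon,T]$ I would bootstrap, exploiting the immediate smoothing. The key observation is a restart identity at the level of second moments: by Chapman--Kolmogorov $f_{i,j}(t)=\sum_k f_{i,k}(s_0)f_{k,j}(t-s_0)$, so Proposition~\ref{p:2ndmom_as_pij} gives $E[X_j^2(t)]=\sum_k E[X_k^2(s_0)]f_{k,j}(t-s_0)$ for $t\geq s_0$. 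Repeating the computation above from time $s_0$ shows that whenever $E\|X(s_0)\|_{H^{\alpha'}}^2<\infty$ one has $\int_{s_0}^TE\|X(t)\|_{H^\beta}^2\,\mathrm dt<\infty$ for every $\beta<\min(1,\alpha'+1)$. Starting from $\alpha=-1$, item~\eqref{i:a0} produces a time $s_1\in(0,\epsilon)$ with $X(s_1)\in L^2(\Omega;H^{\alpha_1})$ for some $\alpha_1\in(-1,0)$; one restart raises the attainable exponent above $0$ at some $s_2\in(s_1,\epsilon)$, and a second restart then yields $\beta<1$ on $[s_2,T]\supseteq[\epsilon,T]$.

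I expect the case analysis of the two geometric sums to be routine, and the genuine content to lie in two places: recognizing that $T$-uniformity is exactly the finiteness of $\int_0^\infty f_{i,j}$ from Lemma~\ref{l:Tj_mean}, and setting up the bootstrap for~\eqref{i:a3}. The latter is the main obstacle, and the cleanest way around it is to run the bootstrap purely through the second-moment/Chapman--Kolmogorov identity above, which avoids having to justify a pathwise Markov restart of the process itself.
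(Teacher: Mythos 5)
Your proposal is correct and follows essentially the same route as the paper's proof: reduce to the second moments via Proposition~\ref{p:2ndmom_as_pij}, bound $\int_0^T f_{i,j}$ uniformly in $T$ by $\int_0^\infty f_{i,j}=k_{i\vee j}^{-2}(1-k_1^{-2})^{-1}$ from Lemma~\ref{l:Tj_mean}, and split the resulting double sum at $j=i$ into two geometric sums whose behaviour is discussed according to the sign of $\beta$, with the condition $\beta-1<\alpha$ (and $\alpha\geq-1$) absorbing everything, including the polynomial factor at $\beta=0$. The only place you add genuine content is item~\ref{i:a3}, which the paper dispatches as ``a trivial consequence of the previous ones applied to subsequent time intervals'': your explicit bootstrap through the Chapman--Kolmogorov identity $E[X_j^2(t)]=\sum_k E[X_k^2(s_0)]f_{k,j}(t-s_0)$, with two restarts inside $(0,\epsilon)$, is a clean way to make that step rigorous entirely at the level of second moments, avoiding any need to argue that the time-shifted process is again a moderate solution.
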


	\begin{proof}
		By Proposition~\ref{p:2ndmom_as_pij} and Lemma~\ref{l:Tj_mean}, we
		can explicitly compute all the $L^2([0,T]\times\Omega)$-norms of the
		components of $X$. Let us define $u$ component by component as
		$u_i:=E[\overline X_i^2]$ for $i\geq1$, and fix $\alpha$ and $\beta$
		satisfying the hypothesis, in particular $\beta<1$ and
		$\alpha>\beta-1$. Then, recalling that
                $k_j=\lambda^j$, with $\lambda>1$,
		\begin{equation*}
		\begin{split}
		\|X\|_{L^2([0,T]\times\Omega;H^\beta)}^2
		& = \int_0^T\sum_{i,j\geq1}k_j^{2\beta}u_if_{i,j}(t)\mathrm{d}t\\
		& \leq \sum_{i,j\geq1}k_j^{2\beta}u_i\int_0^\infty f_{i,j}(t)\mathrm{d}t\\
		& = C\sum_{j\geq1}\sum_{i\geq j}k_i^{-2}k_j^{2\beta}u_i+C\sum_{j\geq1}\sum_{i=1}^{j-1}k_j^{-2+2\beta}u_i\\
		& = C\sum_{i\geq1}k_i^{-2}u_i\sum_{j=1}^ik_j^{2\beta}+C\sum_{i\geq1}u_i\sum_{j>i}k_j^{-2+2\beta}\\
		& \leq C\sum_{i\geq1}k_i^{-2}u_i\sum_{j=1}^ik_j^{2\beta}+C'\sum_{i\geq1}k_i^{-2+2\beta}u_i.
		\end{split}
		\end{equation*}
		The second infinite sum is equal to
		$\|\overline X\|^2_{L^2(\Omega;H^{\beta-1})}$, which is finite
		because $\beta-1<\alpha$. If $\beta>0$, the finite sum is bounded by a constant times $k_i^{2\beta}$, hence the first infinite sum behaves like the second one. If $\beta\leq0$, the finite sum is bounded by a constant or by $i$, hence the expression is again controlled by finite quantities.
		
		The points~\ref{i:a0},~\ref{i:a1} and~\ref{i:a2} are
		immediately verified by substituting suitable values of
		$\alpha$ and $\beta$.  As for~\ref{i:a3}, it is a trivial
		consequence of the previous ones applied to subsequent time
		intervals. 
	\end{proof}
	
	This important result has two interesting consequences. First, we can recover a 
	similar bound on the $L^2$ norm even if we consider the unforced case 
	(i.e.~$\sigma=0$), however, in this case, the bound depends on $T$, too. 
	Second, we can show that the evolution of the $L^2$ norm is continuous, as soon 
	as we have $t>0$.
	
	\begin{corollary}
		\label{c:regularity_sigma}
		The assumption that $\sigma=0$ can be dropped from
		Theorem~\ref{thm:regularity_moderate}, with the only difference that
		\begin{equation}
		\label{e:Hbeta_lin_bd}
		\|X\|_{L^2([0,T]\times\Omega;H^{\beta})}^2
		\leq C'_{\beta,\mathcal L_{\overline X}}\cdot(1+T)
		<\infty,
		\end{equation}
		holds instead of~\eqref{e:Hbeta_const_bd}.
	\end{corollary}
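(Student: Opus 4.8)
The plan is to exploit the linearity of the model, reducing everything to the already-established unforced case plus a purely forced contribution. Exactly as in the proof of Proposition~\ref{p:anom_diss}, by uniqueness and linearity I would decompose $X=X_0+Z$, where $X_0$ is the moderate solution with $\sigma=0$ and the same initial condition $\overline X\in L^2(\Omega;H^\alpha)$, and $Z$ is the unique proper solution with zero initial condition and forcing $\sigma$. For $X_0$, Theorem~\ref{thm:regularity_moderate} directly provides $X_0\in L^2([0,T]\times\Omega;H^\beta)$ with the $T$-independent bound~\eqref{e:Hbeta_const_bd}, for every $\beta<\min(1,\alpha+1)$. The whole task is therefore to bound the forced part $Z$ in $H^\beta$, and I expect this to produce a factor linear in $T$, which is precisely the source of the $(1+T)$ in~\eqref{e:Hbeta_lin_bd}.

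To control $Z$, I would study its second moments $w_j(t):=E[Z_j(t)^2]$. Since $Z$ is a proper solution, Proposition~\ref{p:sol_2ndmom} shows that $w$ solves the forced system~\eqref{eq:ode2ndmom} with $w(0)=0$ and that $w\in L^1([0,T];l^1)$. The key comparison is that $w$ stays below the stationary profile: $w_j(t)\le s_j$ for all $t$ and $j$, where $s_n=\sigma^2\lambda^{-2n}(1-\lambda^{-2})^{-1}$ is the constant solution from Proposition~\ref{prop:uniq_stationary_2mom}. To prove this, set $v:=s-w$. Then $v$ solves the homogeneous forward equations (i.e.~\eqref{eq:ode2ndmom} with $\sigma=0$), it lies in $L^1([0,T];l^1)$, and $v(0)=s\in l^1$. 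On the other hand, by Lemma~\ref{l:l1_mix_fij} the process $\tilde v_j(t):=\sum_{i\ge1}s_if_{i,j}(t)$ is a solution of the same homogeneous equations in $L^\infty(\mathbb R_+;l^1)$ with the same initial datum $s$. Uniqueness in $L^1([0,T];l^1)$, granted by Theorem~\ref{t:uniqueness_linear}, forces $v=\tilde v$ on $[0,T]$; since $s_i\ge0$ and $f_{i,j}\ge0$, we get $v_j(t)=\tilde v_j(t)\ge0$, that is $w_j(t)\le s_j$.

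With the pointwise bound in hand, the $H^\beta$ estimate for $Z$ is immediate: for $\beta<1$,
\[
\|Z\|_{L^2([0,T]\times\Omega;H^\beta)}^2
=\int_0^T\sum_{j\ge1}k_j^{2\beta}w_j(t)\,\mathrm dt
\le T\sum_{j\ge1}k_j^{2\beta}s_j
=\frac{\sigma^2T}{1-\lambda^{-2}}\sum_{j\ge1}\lambda^{(2\beta-2)j},
\]
and the geometric series converges precisely because $\beta<1$, giving $\|Z\|_{L^2([0,T]\times\Omega;H^\beta)}^2\le C''_\beta\,T$ with $C''_\beta$ independent of $\overline X$ and of $T$. (Equivalently, one could obtain the same bound through the Duhamel representation $w_j(t)=\sigma^2\int_0^tf_{1,j}(s)\,\mathrm ds$ together with Lemma~\ref{l:Tj_mean}, since $\int_0^\infty f_{1,j}=s_j/\sigma^2$.) Combining with the unforced estimate via the triangle inequality and $(a+b)^2\le2a^2+2b^2$ yields
\[
\|X\|_{L^2([0,T]\times\Omega;H^\beta)}^2
\le2\,C_{\beta,\mathcal L_{\overline X}}^2+2\,C''_\beta\,T
\le C'_{\beta,\mathcal L_{\overline X}}(1+T),
\]
which is exactly~\eqref{e:Hbeta_lin_bd}; since the admissible range $\beta<\min(1,\alpha+1)$ is unchanged, the four itemized consequences of Theorem~\ref{thm:regularity_moderate} then transfer verbatim.

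The main obstacle — and essentially the only nontrivial point — is the comparison $w_j(t)\le s_j$, whose rigour hinges on applying the uniqueness Theorem~\ref{t:uniqueness_linear} in the class $L^1([0,T];l^1)$; one must check that both $w$ and the stationary solution $s$, hence their difference, genuinely belong to this class, which they do since $Z$ is proper and $s\in l^1$. Everything else reduces to linearity and an elementary convergent geometric series.
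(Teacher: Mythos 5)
Your proof is correct, but it uses a different decomposition from the paper's, and the difference is worth noting. The paper writes $X=Y+Z$ where $Y$ is the \emph{forced} proper solution started from the deterministic stationary profile $\overline Y_n=\sqrt{s_n}$, so that by Proposition~\ref{p:sol_2ndmom}, Theorem~\ref{t:uniqueness_linear} and Proposition~\ref{prop:uniq_stationary_2mom} the second moments of $Y$ are constant in time and $\|Y\|_{L^2([0,T]\times\Omega;H^\beta)}^2=T\|\overline Y\|_{H^\beta}^2$ for free; the remainder $Z$ is then unforced with initial condition $\overline X-\overline Y$ and is handled by Theorem~\ref{thm:regularity_moderate}. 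You instead put the forcing into the zero-initial-condition proper solution $Z$ and keep the original datum in the unforced part, which obliges you to prove the comparison $w_j(t)\le s_j$; your argument for it (writing $s-w$ as the homogeneous evolution of $s$ via Lemma~\ref{l:l1_mix_fij} and invoking uniqueness from Theorem~\ref{t:uniqueness_linear} in $L^1([0,T];l^1)$, then using nonnegativity of $s$ and of the minimal transition function) is sound, and the geometric-series computation $\sum_jk_j^{2\beta}s_j<\infty$ for $\beta<1$ is exactly what is needed. The two routes lean on the same ingredients (the stationary profile of Proposition~\ref{prop:uniq_stationary_2mom} and uniqueness for the second-moment system); the paper's choice avoids the comparison step entirely at the cost of shifting the initial condition of the unforced piece, while yours keeps the unforced piece attached to $\mathcal L_{\overline X}$ and makes the forced piece universal, at the cost of the extra (but correctly executed) positivity argument.
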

	\begin{proof}
          By linearity and uniqueness of moderate solutions, we
          decompose the solution as $X=Y+Z$, where $Z$ has zero
          forcing and $Y$ has constant second moments of
          components. To this end, let $(s_n)_n$ be as in the
          statement of Proposition~\ref{prop:uniq_stationary_2mom},
          and let $Y$ be the unique proper solution with forcing
          $\sigma$ and deterministic initial condition $\overline Y$
          defined by $\overline Y_n:=\sqrt{s_n}$.  By
          Proposition~\ref{p:sol_2ndmom}, the second moments of $Y$
          satisfy system~\eqref{eq:ode2ndmom}, so by uniqueness and
          Proposition~\ref{prop:uniq_stationary_2mom}, the second
          moments of the components of $Y$ are constant, and thus
          $Y(t)\in L^2(\Omega;H^s)$ for all $t\geq0$ and all $s<1$. By
          hypothesis $\overline X\in L^2(\Omega;H^\alpha)$. Then
          $\overline Z:=\overline X-\overline Y\in L^2(\Omega;H^r)$
          for all $r\leq\alpha$, $r<1$. Let $Z$ be the moderate
          solution with no forcing and with initial condition
          $\overline Z$, to which
          Theorem~\ref{thm:regularity_moderate} applies. Then $X=Y+Z$
          has the same regularity as $Z$, and if $\beta$ is like in
          the statement of that theorem,
		\[
		\begin{split}
		\|X\|_{L^2([0,T]\times\Omega;H^{\beta})}^2
		&\leq2\|Y\|_{L^2([0,T]\times\Omega;H^{\beta})}^2+2\|Z\|_{L^2([0,T]\times\Omega;H^{\beta})}^2\\
		&\leq2T\|\overline Y\|_{L^2(\Omega;H^{\beta})}^2+2C_{\beta,\mathcal L_{\overline Z}}
		.\qedhere
		\end{split}
		\]
	\end{proof}
	
	\begin{corollary}
		\label{c:energy_finite_all_t} 
		For any moderate solution $X$ and for all $s<1$ the
		$L^2(\Omega;H^s)$-norm of $X$ is finite and continuous on $(0,T]$.
		In particular, $X(t)\in l^2$ a.s.~for all positive $t$.
	\end{corollary}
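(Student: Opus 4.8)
The plan is to reduce to the unforced case, rewrite the $L^2(\Omega;H^s)$-norm through the minimal transition function $f$, and then reduce the whole statement to a pointwise decay estimate for $f_{i,j}(t)$ in the high modes. First I would split $X=Y+Z$ exactly as in the proof of Corollary~\ref{c:regularity_sigma}: $Y$ is the proper solution whose components have the constant second moments $s_n=\sigma^2\lambda^{-2n}(1-\lambda^{-2})^{-1}$ of Proposition~\ref{prop:uniq_stationary_2mom}, and $Z$ is the moderate solution with $\sigma=0$ and initial datum $\overline Z=\overline X-\overline Y$. For $Y$ one has $E\|Y(t)\|_{H^s}^2=\sum_{n\ge1}k_n^{2s}s_n=\frac{\sigma^2}{1-\lambda^{-2}}\sum_{n\ge1}\lambda^{2(s-1)n}$, which is finite for every $s<1$ and constant in $t$, hence trivially finite and continuous. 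So it suffices to treat $Z$, and by Proposition~\ref{p:2ndmom_as_pij} the quantity to control is
\[
\phi(t):=E\|Z(t)\|_{H^s}^2=\sum_{i,j\ge1}k_j^{2s}\,u_i\,f_{i,j}(t),\qquad u_i:=E[\overline Z_i^2],
\]
a series of nonnegative, continuous functions of $t$.

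For finiteness at a fixed $t_0>0$ I would first \emph{gain} regularity by restarting. By the strong Markov/uniqueness property (Theorem~\ref{t:uniqueness_moderate}, the time-shift of $Z$ being the moderate solution started at $Z(\epsilon)$) together with point~\ref{i:a3} of Theorem~\ref{thm:regularity_moderate}, for a.e.\ small $\epsilon\in(0,t_0)$ one has $Z(\epsilon)\in L^2(\Omega;H^\gamma)$ for a chosen $\gamma\in(s,1)$; replacing $\overline Z$ by $Z(\epsilon)$ and $t_0$ by $t_0-\epsilon$, I may assume $\sum_i k_i^{2\gamma}u_i<\infty$ with $\gamma>s$. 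I would then split the double sum according to $j\le i$ and $j>i$. For the \emph{downward} part I would use the hitting-probability bound $f_{i,j}(t)\le 1-\pi_{i,j}=\lambda^{-2[(i-j)\vee0]}$ obtained inside the proof of Lemma~\ref{l:Tj_mean}; summing the geometric series in $j$ (recall $s+1>0$) leaves a bound by a constant times $\sum_i k_i^{2s}u_i=\|\overline Z\|_{L^2(\Omega;H^s)}^2$, which is finite since $s<\gamma$.

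The \emph{upward} part $\sum_i u_i\sum_{j>i}k_j^{2s}f_{i,j}(t)$ is the real obstacle, and it is precisely where the threshold $s<1$ is sharp. The mechanism is the anomalous dissipation already used for \eqref{e:noinf_notsure}: at a high state $j$ the holding rate is $-\Pi_{j,j}\asymp k_j^2$, so the minimal chain occupies $j$ only in a window of width $\asymp k_j^{-2}$ around the explosion time, and one expects a pointwise bound of the form $f_{i,j}(t)\le C(i,t)\,k_j^{-2}$ for $j\ge i$. Granting this, the upward part is controlled by $C\sum_i k_i^{2}u_i\sum_{j>i}\lambda^{2(s-1)j}\le C'\sum_i k_i^{2s}u_i<\infty$, again finite because $s<\gamma$; note that $\sum_j k_j^{2s}k_j^{-2}<\infty$ if and only if $s<1$, which is exactly the stated range. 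I expect the proof of this decay to be the hard step: it amounts to upgrading the integrated identity $\int_0^\infty f_{i,j}(r)\,\mathrm dr=\mathcal E_i(T_j)=\frac{k_{i\vee j}^{-2}}{1-k_1^{-2}}$ of Lemma~\ref{l:Tj_mean} to a pointwise statement. A natural route is to bound the tail $\int_t^\infty f_{i,j}(r)\,\mathrm dr\le C\,k_j^{-4}$ and combine it with the strong Markov property at time $t$, which gives $\int_t^\infty f_{i,j}(r)\,\mathrm dr=\sum_k f_{i,k}(t)\,\mathcal E_k(T_j)\ge f_{i,j}(t)\,\mathcal E_j(T_j)=\frac{k_j^{-2}}{1-k_1^{-2}}f_{i,j}(t)$, so that $f_{i,j}(t)\le(1-k_1^{-2})k_j^2\int_t^\infty f_{i,j}$; the remaining work is controlling the distribution of the hitting/explosion times near the fixed time $t$, which is where the occupation mass at high modes must be shown to concentrate away from $t$.

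Finally, for continuity on $(0,T]$ I would run the same estimates with a bound $f_{i,j}(t)\le C(i,[\epsilon,T])\,k_j^{-2}$ that is uniform for $t\in[\epsilon,T]$, so that the series defining $\phi$ converges uniformly on each $[\epsilon,T]$. Since every $f_{i,j}$ is continuous, $\phi$ is a uniform limit of continuous partial sums on $[\epsilon,T]$, hence continuous there, and by arbitrariness of $\epsilon>0$ continuous on all of $(0,T]$; the finiteness statement follows from the uniform bound as well. The last assertion is then immediate: taking $s=0$ gives $E\|X(t)\|_{l^2}^2<\infty$ for every $t>0$, whence $X(t)\in l^2$ almost surely.
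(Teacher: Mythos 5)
Your overall architecture (reduce to $\sigma=0$ via the decomposition of Corollary~\ref{c:regularity_sigma}, then write $E\|Z(t)\|_{H^s}^2=\sum_{i,j}k_j^{2s}u_if_{i,j}(t)$ via Proposition~\ref{p:2ndmom_as_pij}) is coherent, and the downward part $j\le i$ and the restarting step are essentially fine (modulo the small point that your geometric-series bound needs $s>-1$; for $s\le-1$ the claim follows from the $s=0$ case by monotonicity of the $H^s$ norms, so this is cosmetic). The genuine gap is exactly where you flag it: both the upward sum $\sum_i u_i\sum_{j>i}k_j^{2s}f_{i,j}(t)$ and the uniform convergence you invoke for continuity rest entirely on the pointwise bound $f_{i,j}(t)\le C(i,[\epsilon,T])\,k_j^{-2}$ for $j\ge i$, which you do not prove. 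Your proposed route via $f_{i,j}(t)\le(1-k_1^{-2})k_j^2\int_t^\infty f_{i,j}(r)\,\mathrm dr$ merely relocates the difficulty to showing $\int_t^\infty f_{i,j}(r)\,\mathrm dr\le Ck_j^{-4}$, i.e.\ that only a fraction of order $k_j^{-2}$ of the total occupation measure $\mathcal E_i(T_j)\asymp k_j^{-2}$ of state $j$ lies after the fixed time $t$. Since for large $j$ all visits to $j$ occur in a vanishing window just before the explosion time $\tau$, this is in substance a density estimate for $\tau$ under $\mathcal P_i$, uniform over $t\in[\epsilon,T]$; nothing of the sort is established in the paper (Lemma~\ref{l:Tj_mean} only gives the time-integrated quantity, and Lemmas~\ref{l:exists_t1}--\ref{l:l14bfm2011AAP} only give $\mathcal P_i(\tau>t)<1$). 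As written, the proof is therefore incomplete at its central step.

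It is worth noting that the paper's own proof sidesteps this quantitative issue entirely with a much softer argument: by point~\ref{i:a3} of Theorem~\ref{thm:regularity_moderate} (and Corollary~\ref{c:regularity_sigma} for $\sigma>0$) the $L^2(\Omega;H^s)$-norm is already finite for a.e.\ $t\in(0,T]$; each second moment $u_j(t)=E[X_j^2(t)]$ is continuous in $t$; hence $t\mapsto\sum_jk_j^{2s}u_j(t)$ is a supremum of continuous finite partial sums, and a short contradiction argument comparing a finite partial sum at $t$ against the values along a sequence of ``good'' times $t_n\to t$ upgrades a.e.\ finiteness to finiteness and continuity on all of $(0,T]$. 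If you want to keep your approach, the cheapest repair is to drop the pointwise decay of $f_{i,j}$ altogether and substitute this semicontinuity argument; otherwise you owe a complete proof of the decay estimate, which is a nontrivial piece of work in its own right.
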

	\begin{proof}
		Fix $s<1$, and let
		$\|\cdot\|$ denote the $L^2(\Omega;H^s)$-norm. By the last statement
		of Theorem~\ref{thm:regularity_moderate}, we know that $\|X(t)\|$ is
		finite for a.e.~$t\in(0,T]$. Let $(t_n)_n$ be a sequence of such
		times converging to some $t$, and suppose by contradiction that
		$\lim_n\|X(t_n)\|$ does not exists or is different from $\|X(t)\|$,
		which may or may not be finite. Then without loss of generality we
		can deduce that there exists a subsequence $(n_k)_k$ and a real
		$a$, such that
		\[
		\limsup_k\|X(t_{n_k})\|<a< \|X(t)\|.
		\]
		Then there exists $j_0$ such that
		\[
		\sum_{j=1}^{j_0}k_j^{2s}E[X_j^2(t)]>a^2.
		\]
		The left-hand side is a finite sum of seconds moments, hence it is
		continuous in $t$ by Proposition~\ref{p:2ndmom_as_pij}, yielding
		that for $k$ large also $\|X(t_{n_k})\|>a$, which is a
		contradiction. 
	\end{proof}
	
	\section{Invariant measure}
	\label{sec:invariant}
	This final section deals with invariant measures for the transition
	semigroup associated with moderate solutions. We prove that there exists
	one with support on $H^{1^-}\subset l^2$ which is the unique one among those with
	support on $H^{-1}$.
	
	Let $(P_t)_{t\geq0}$ be the transition semigroup associated to the
	moderate solutions, meaning that
	for all $A\subset H^{-1}$ measurable, $x\in H^{-1}$,
	$\varphi\in C_b(H^{-1})$ and $t\geq0$,
	we define
	\[
	P_t(x,A)
	:=P(X^x(t)\in A) 
	,\qquad\text{and}\qquad
	P_t \varphi(x)
	:= E[\varphi(X^x(t))].
	\]
	where $X^x$ is the moderate solution with deterministic initial
	condition $\overline X=x$. (Notice that we are not specifying $T$: the solution can be taken on any interval $[0,T]$, with $T\geq t$, and the semigroup is well-defined thanks to
	Theorem~\ref{t:uniqueness_moderate}.)
	
	\begin{theorem}
		\label{t:invmeas_exist}
		The semigroup $P_t$ associated to moderate solutions, admits an
		invariant measure supported on $l^2$.
	\end{theorem}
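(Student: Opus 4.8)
The plan is to run the Krylov--Bogoliubov procedure, drawing on the regularity theory of Section~\ref{sec:markovchain} for the compactness and on the contraction bound~\eqref{e:avg_h1_bd_modsol} for the Feller property. Fix $\beta\in(0,1)$, start the dynamics from the deterministic configuration $x=0$, write $X^0$ for the corresponding moderate solution (well defined and unique by Theorem~\ref{t:uniqueness_moderate}), and consider the time-averaged occupation measures on $H^{-1}$,
\[
R_T(\cdot):=\frac1T\int_0^T P_t(0,\cdot)\,\mathrm{d}t,\qquad T\geq1.
\]

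The first and central step is a uniform-in-$T$ moment estimate. Since $0\in L^2(\Omega;H^\alpha)$ for every $\alpha$, Corollary~\ref{c:regularity_sigma} applies and the bound~\eqref{e:Hbeta_lin_bd} gives $\int_0^T E\|X^0(t)\|_{H^\beta}^2\,\mathrm{d}t\leq C'_{\beta}(1+T)$, whence
\[
\int_{H^{-1}}\|y\|_{H^\beta}^2\,R_T(\mathrm{d}y)
=\frac1T\int_0^T E\|X^0(t)\|_{H^\beta}^2\,\mathrm{d}t
\leq 2C'_{\beta}
\qquad\text{for all }T\geq1.
\]
Because $\beta>0$, the weights $k_n^{2\beta}$ diverge and the embedding $H^\beta\hookrightarrow l^2$ is compact, while $l^2\hookrightarrow H^{-1}$ is continuous; hence each ball $K_M:=\{y:\|y\|_{H^\beta}\leq M\}$ is a compact (thus closed) subset of $H^{-1}$ contained in $l^2$. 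Chebyshev's inequality yields $R_T(K_M^c)\leq 2C'_\beta/M^2$ uniformly in $T\geq1$, so $\{R_T\}_{T\geq1}$ is tight in the Polish space $H^{-1}$. By Prokhorov's theorem some sequence $R_{T_k}$ converges weakly to a measure $\mu$ on $H^{-1}$, and the portmanteau theorem gives $\mu(K_M)\geq 1-2C'_\beta/M^2$; letting $M\to\infty$ shows $\mu(H^\beta)=1$, so $\mu$ is supported on $l^2$.

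It remains to verify the Feller property and deduce invariance. By linearity of the system, if $x_n\to x$ in $H^{-1}$ then $X^{x_n}(t)-X^x(t)$ is the moderate solution with zero forcing and initial condition $x_n-x$, so~\eqref{e:avg_h1_bd_modsol} with $\sigma=0$ gives $E\|X^{x_n}(t)-X^x(t)\|_{H^{-1}}^2\leq 2\|x_n-x\|_{H^{-1}}^2\to0$; thus $X^{x_n}(t)\to X^x(t)$ in distribution and $P_t\varphi\in C_b(H^{-1})$ for every $\varphi\in C_b(H^{-1})$. For invariance, the semigroup property gives
\[
\int_{H^{-1}} P_s\varphi\,\mathrm{d}R_{T_k}
=\frac1{T_k}\int_0^{T_k}P_{t+s}\varphi(0)\,\mathrm{d}t,
\]
which differs from $\int\varphi\,\mathrm{d}R_{T_k}=\frac1{T_k}\int_0^{T_k}P_t\varphi(0)\,\mathrm{d}t$ by at most $2s\|\varphi\|_\infty/T_k\to0$. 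Passing to the limit, using weak convergence together with $P_s\varphi,\varphi\in C_b(H^{-1})$, yields $\int P_s\varphi\,\mathrm{d}\mu=\int\varphi\,\mathrm{d}\mu$ for all $s>0$ and all $\varphi\in C_b(H^{-1})$, i.e.~$\mu$ is invariant and supported on $l^2$.

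I expect the genuinely substantive point to be the uniform moment bound of the second step: everything hinges on the fact, recorded in Corollary~\ref{c:regularity_sigma}, that for $\beta>0$ the $H^\beta$-norm grows at most linearly in $T$, so its time average stays bounded. This is precisely where the Markov-chain analysis of Section~\ref{sec:markovchain} is indispensable, lifting the merely $H^{-1}$-valued dynamics to occupation measures that concentrate on the compactly embedded space $H^\beta$; the Feller property (via the $H^{-1}$-contraction) and the averaging identity are routine.
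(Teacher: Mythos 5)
Your proof is correct and follows essentially the same route as the paper: Krylov--Bogoliubov, with tightness obtained from the linear-in-$T$ bound~\eqref{e:Hbeta_lin_bd} of Corollary~\ref{c:regularity_sigma} via compactness of $H^\beta$-balls, and the Feller property from a contraction estimate for the difference of two solutions. The only (immaterial) difference is that you work throughout on $H^{-1}$ and get Feller from the $H^{-1}$-bound~\eqref{e:avg_h1_bd_modsol} with $\sigma=0$, whereas the paper first restricts the semigroup to $l^2$ (using Corollary~\ref{c:energy_finite_all_t}) and derives the Feller property from the $l^2$-contraction of Proposition~\ref{p:anom_diss}.
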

	
	\begin{proof}
		By Corollary~\ref{c:energy_finite_all_t}, $P_t(x,l^2)=1$ for all
		$t>0$ and $x\in H^{-1}$, so it makes sense to consider the semigroup
		restricted to $l^2$.
		
		To prove existence, we rely on Corollary~3.1.2
		in~\cite{DapZab1996ergodicity}, which states that there exists an
		invariant measure for a Feller Markov semigroup $P_t$. This holds under the
		assumption that for some probability measure $\nu$ and sequence of
		times $T_n\uparrow\infty$, the sequence $(R^*_{T_n}\nu)_{n\geq1}$ is
		tight, where $R^*_t$ is the operator on probability measures associated to $P_t$,
		defined by
		\[
		R^*_t\nu(A)
		:=\frac1t\int_0^t\int_{l^2}P_s(x,A)\nu(\mathrm dx)\mathrm ds,
		\]
		for every probability measure $\nu$ on $l^2$ and measurable set $A$ of $l^2$.
		
		Let us start with the tightness. Choose $\nu=\delta_0$ and let
		$\beta\in(0,1)$. The compact set to verify tightness will be the
		$H^\beta$-norm closed ball of radius $r$, which is compact under the
		$l^2$ norm,
		\[
		B(r):=\{x\in l^2:\|x\|_{H^\beta}\leq r\}.
		\]
		Then, for all $T>0$, 
		\[
		\begin{split}
		R^*_T\nu(B(r))
		&=\frac1T\int_0^TP_t(0,B(r))\mathrm{d}t
		=\frac1T\int_0^TP(\|X^0(t)\|_{H^\beta}\leq r)\mathrm{d}t\\
		&\geq1-\frac1T\int_0^Tr^{-2}E\|X^0(t)\|_{H^\beta}^2\mathrm{d}t
		=1-\frac1Tr^{-2}\|X^0\|_{L^2([0,T]\times\Omega;H^\beta)}^2.
		\end{split}
		\]
		Now Corollary~\ref{c:regularity_sigma} applies, and
		by~\eqref{e:Hbeta_lin_bd} there exists a constant $C$ such
		that $R^*_T\nu(B(r))\geq 1-C r^{-2}$ for all $T$ and all $r$, which
		proves the tightness.
		
		Let us now move on to the Feller property: to show that it
		holds, we follow an argument similar to the one hinted to
		in~\cite{BrzFlaNekZeg}.  For $x\in l^2$ and
		$\sigma\in\mathbb R$, let $X^{x,\sigma}$ denote the unique
		moderate solution with deterministic initial condition
		$\overline X=x$ and forcing $\sigma$. Then, if $x$ and $y$ are
		two points il $l^2$, we have,
		\begin{equation}
		\label{eq:L2bound}
		E  [\| X^{x,\sigma} (t) - X^{y,\sigma} (t) \|^2_{l^2}]
		=E  [\| X^{x - y,0} (t)\|^2_{l^2}] \leqslant \| x - y
		\|_{l^2}^2
		,\qquad t\geq0,
		\end{equation}
		where we used uniqueness, linearity (whence forcing terms cancel out)
		and Proposition~\ref{p:anom_diss}.
		
		Now consider a sequence $x_n \rightarrow x$ in $l^2$.
		By equation~\eqref{eq:L2bound}, $X^{x_n,\sigma}\longrightarrow
		X^{x,\sigma}$ in $L^2(\Omega;l^2)$, hence in probability and
		in law, meaning that for all $\varphi \in
		C_b (l^2)$:
		
		\[
		P_t \varphi (x_n) =E  [\varphi (X^{x_n,\sigma} (t))] \rightarrow
		E  [\varphi (X^{x,\sigma} (t))] = P_t \varphi (x), 
		\]
		which gives us the continuity of the semigroup $P_t$. 
	\end{proof}
	
	\begin{remark}
		Theorem~\ref{t:invmeas_exist} can be improved to $H^{1^-}$
		regularity.  In fact, again by
		Corollary~\ref{c:energy_finite_all_t}, $P_t(x,H^s)=1$ for all $s<1$,
		$t>0$ and $x\in H^{-1}$, so actually the invariant measure has
		support on $H^{1^-}:=\bigcap_{s<1}H^s$.
	\end{remark}
	
	To prove the uniqueness of the invariant measure, we use the strategy shown in~\cite{AndBarColForPro2016N}: we formulate the problem as a Kantorovich problem in transport of mass (see for example~\cite{ambrosio2013user, AmbGigSav2008gradient, rachev1998mass}) and proceed by showing a contradiction caused by assuming the existence of two different invariant measures.
	
	\begin{theorem}
		There is a unique invariant measure supported on $H^{-1}$ for the semigroup associated with moderate solutions.
	\end{theorem}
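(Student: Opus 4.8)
The plan is to reduce the problem to the finite-energy space and then run a contraction argument in a Kantorovich metric built from the $l^2$ distance. First I would observe that any invariant measure $\mu$ supported on $H^{-1}$ is in fact supported on $l^2$: by Corollary~\ref{c:energy_finite_all_t} one has $P_t(x,l^2)=1$ for every $t>0$ and $x\in H^{-1}$, so invariance gives $\mu(l^2)=\int_{H^{-1}}P_t(x,l^2)\,\mu(\mathrm dx)=1$ (indeed $\mu$ lives on $H^{1^-}$). Thus two invariant measures $\mu,\nu$ may be regarded as Borel probability measures on the Polish space $l^2$. The dynamical input is the strict energy contraction for the difference of two solutions: by uniqueness (Theorem~\ref{t:uniqueness_moderate}) and linearity $X^{x,\sigma}-X^{y,\sigma}=X^{x-y,0}$, and Proposition~\ref{p:anom_diss} together with~\eqref{e:noinf_notsure} gives, for deterministic $x\neq y$ in $l^2$ and every $t>0$,
\[
E\bigl[\|X^{x,\sigma}(t)-X^{y,\sigma}(t)\|_{l^2}^2\bigr]=\sum_{i\geq1}(x_i-y_i)^2\,\mathcal P_i(\tau>t)<\|x-y\|_{l^2}^2.
\]

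The difficulty is that an invariant measure on $H^{-1}$ need not have finite $l^2$-moments, so the naive quadratic Wasserstein distance $W_2(\mu,\nu)$ may be infinite and the contraction~\eqref{eq:L2bound} vacuous. To circumvent this I would work with a bounded cost: fix a strictly increasing, concave $f\colon[0,\infty)\to[0,\infty)$ with $f(0)=0$ and $\sup f<\infty$, for instance $f(r)=1-e^{-r}$. Since $f$ is concave with $f(0)=0$ it is subadditive, hence $\rho(x,y):=f(\|x-y\|_{l^2})$ is a bounded metric on $l^2$, and the associated Kantorovich cost
\[
W_\rho(\mu,\nu):=\inf_{\pi\in\Gamma(\mu,\nu)}\int_{l^2\times l^2}\rho(x,y)\,\pi(\mathrm dx,\mathrm dy)
\]
is always finite and is a genuine metric separating probability measures. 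As $\rho$ is continuous and bounded on the Polish space $l^2$, an optimal plan $\pi_0\in\Gamma(\mu,\nu)$ exists.

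I would then argue by contradiction, assuming $\mu\neq\nu$. Couple the two dynamics synchronously: sample $(x,y)\sim\pi_0$ and drive both $X^{x,\sigma}$ and $X^{y,\sigma}$ by the same Brownian motions. By invariance the law of $(X^{x,\sigma}(t),X^{y,\sigma}(t))$ is again a coupling of $\mu$ and $\nu$ for every $t$, so $W_\rho(\mu,\nu)\leq\int E[\rho(X^{x,\sigma}(t),X^{y,\sigma}(t))]\,\pi_0(\mathrm dx,\mathrm dy)$. Writing $D_t:=\|X^{x,\sigma}(t)-X^{y,\sigma}(t)\|_{l^2}$, the energy contraction above yields $E[D_t]\leq(E[D_t^2])^{1/2}<\|x-y\|_{l^2}$ whenever $x\neq y$, and Jensen's inequality for the concave $f$ together with its strict monotonicity gives
\[
E[\rho(X^{x,\sigma}(t),X^{y,\sigma}(t))]=E[f(D_t)]\leq f(E[D_t])<f(\|x-y\|_{l^2})=\rho(x,y),
\]
a strict inequality on $\{x\neq y\}$ and an equality on the diagonal. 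Since $\mu\neq\nu$ forces $\pi_0(\{x\neq y\})>0$ and all integrands are bounded, integrating against $\pi_0$ preserves strictness, whence $W_\rho(\mu,\nu)\leq\int E[\rho(\cdot,\cdot)]\,\mathrm d\pi_0<\int\rho\,\mathrm d\pi_0=W_\rho(\mu,\nu)$, a contradiction. Therefore $\mu=\nu$, and combined with the existence theorem this produces the unique invariant measure, supported on $H^{1^-}$.

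I expect the main obstacle to be precisely the absence of a priori moment bounds on invariant measures: without them the standard $W_2$ distance is unavailable, and the whole argument rests on replacing it by a bounded concave transport cost that still detects the strict dissipation $\mathcal P_i(\tau>t)<1$. The secondary points to verify carefully are the reduction of the support to $l^2$ via Corollary~\ref{c:energy_finite_all_t}, the existence of an optimal plan for the continuous bounded cost $\rho$, and the fact that the synchronous coupling really produces, through linearity and uniqueness, the forcing-free difference process to which Proposition~\ref{p:anom_diss} applies.
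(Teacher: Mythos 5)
Your proof is correct, and it follows the same overall strategy as the paper's --- an optimal-coupling contradiction driven by the strict $l^2$ contraction $E\|X^{x-y,0}(t)\|_{l^2}^2=\sum_i(x_i-y_i)^2\,\mathcal P_i(\tau>t)<\|x-y\|_{l^2}^2$ from Proposition~\ref{p:anom_diss} and~\eqref{e:noinf_notsure} --- but it implements the key technical step differently. The paper keeps the quadratic cost $c(x,y)=\|x-y\|_{l^2}^2$, lets it take the value $+\infty$ on $H^{-1}\times H^{-1}$, obtains an optimal plan from lower semicontinuity of $c$ (Theorem~1.5 of~\cite{ambrosio2013user}), and then argues by cases: if the optimal cost is infinite, the contradiction comes from the instantaneous regularization of Corollary~\ref{c:energy_finite_all_t}, which makes the time-$t$ cost finite; if it is finite, Proposition~\ref{p:anom_diss} applied directly to the random difference $\overline X^{(1)}-\overline X^{(2)}$ gives the strict decrease. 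You instead first push both supports into $l^2$ (using the same Corollary~\ref{c:energy_finite_all_t} together with invariance --- a step the paper leaves implicit) and then replace the cost by the bounded concave $f(\|x-y\|_{l^2})$, converting the second-moment contraction into a strict pointwise contraction of the new cost via Cauchy--Schwarz, Jensen, and the strict monotonicity of $f$. What this buys is a single uniform argument with no case split and no extended-real-valued cost, and the existence of an optimal plan becomes elementary (continuous bounded cost on the weakly compact set of couplings). The price is an extra layer of Jensen-type estimates and the need to apply the contraction conditionally on the deterministic pair $(x,y)$ and then integrate the strict inequality over $\{x\neq y\}$, which has positive $\pi_0$-measure; you handle both points correctly (the diagonal gives equality by uniqueness, Theorem~\ref{t:uniqueness_moderate}). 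Both routes ultimately rest on the same regularity and dissipation inputs, so neither is more elementary; yours is somewhat more self-contained on the optimal-transport side.
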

	
	\begin{proof}
		Let us assume, by contradiction, that there are two different invariant measures 
		$\mu^1$ and $\mu^2$.
		We can define the set $\Gamma=\Gamma(\mu^1, \mu^2)$ of admissible transport plans 
		$\gamma$ from $\mu^1$ to $\mu^2$, that is the set of joint measures which have 
		the $\mu^i$ as marginals.
		
		We can also define the functional $\Phi$ on $\Gamma$ in the following way: for $\gamma \in \Gamma$
		\[
		\Phi(\gamma) = \int_{l^2\times l^2} \|x-y\|^2_{l^2} \mathrm{d}\gamma(x,y), 
		\]
		that is, we take as cost function $c(x,y) =
		\|x-y\|^2_{l^2}$. We claim that there exists an optimal
		transport map in Kantorovich problem, that is a
		$\gamma_0\in \Gamma$ such that
		$\Phi(\gamma_0)\leq \Phi(\gamma)$ for all $\gamma\in \Gamma$.
		
		Then we can consider the random vector
		$(\overline{X}^{(1)},\overline{X}^{(2)})$, with joint
		distribution $\gamma_0$, and in particular marginal
		distributions $\mu^1$ and $\mu^2$.  Let $X^{(1)}$ and
		$X^{(2)}$ be the moderate solutions with initial conditions
		$\overline{X}^{(1)}$ and $\overline{X}^{(2)}$,
		respectively. Since $\mu^1$ and $\mu^2$ are invariant
		measures, for each $t>0$ the random vector
		$(X^{(1)}(t), X^{(2)}(t))$ has a joint law
		$\gamma_t\in \Gamma$. Consequently we have
		\begin{equation*}
		\begin{split}
		E\|\overline{X}^{(1)}-\overline{X}^{(2)}\|_{l^2}^2
		&=\int_{l^2\times l^2}\|x-y\|_{l^2}^2\mathrm{d}\gamma_0(x,y)
		= \Phi(\gamma_0)
		\leq \Phi(\gamma_t)\\
		&= \int_{l^2\times l^2}\|x-y\|_{l^2}^2 \mathrm{d}\gamma_t(x,y)
		= E\|X^{(1)}(t)-X^{(2)}(t)\|_{l^2}^2
		\end{split}
		\end{equation*}
		
		To conclude the proof, we need a contradiction, e.g.~a
		contraction property of the solutions.  Let us define
		$X:=X^{(1)}-X^{(2)}$. By linearity and uniqueness, this is the
		moderate solution of the problem with $\sigma=0$ and initial
		condition
		$\overline{X}:=\overline{X}^{(1)}-\overline{X}^{(2)}\in
		L^2(\Omega;H^{-1})$. If
		$\|\overline X\|_{L^2(\Omega;l^2)}=\infty$, the contradiction
		is given by Corollary~\ref{c:energy_finite_all_t}. If instead
		the $l^2$ is finite, Proposition~\ref{p:anom_diss} applies,
		and we have the contraction
		\[
		E(\|X(t)\|_{l^2}^2)<E(\|\overline{X}\|_{l^2}^2)
		,\qquad t>0.
		\]
		
		We are left with the claim. By Theorem 1.5
		in~\cite{ambrosio2013user}, it is enough to check that $c$ is
		lower semicontinuous and bounded from below. To prove the
		former, consider converging sequences $x^{(n)}\to x$ and
		$y^{(n)}\to y$ in $H^{-1}$. If $x-y\not\in l^2$ then
		$\|x^{(n)}-y^{(n)}\|_{l^2}=+\infty$ definitely, because $l^2$ is a
		closed subspace of $H^{-1}$, and otherwise there would be a
		subsequence inside $l^2$ converging to a point outside of
		it. On the other hand, if $x-y\in l^2$, then for all
		$\varepsilon>0$ there exists $k$ such that
		\[
		\sum_{i=1}^k(x-y)_i^2
		\geq \|x-y\|_{l^2}^2-\varepsilon/2.
		\]
		Convergence in $H^{-1}$ implies convergence of components, so
		there exists $n_0$ such that for $n\geq n_0$
		\[
		\sum_{i=1}^k(x^{(n)}-y^{(n)})_i^2
		\geq \sum_{i=1}^k(x-y)_i^2-\varepsilon/2,
		\]
		yielding that $c(x^{(n)},y^{(n)})\geq c(x,y)-\varepsilon$ definitely. 
	\end{proof}
	
	\begin{remark}
		This result only applies to invariant measures for moderate solutions. It is however possible to construct wilder componentwise solutions that are stationary, such as the Gaussian one discussed in~\cite{BrzFlaNekZeg}.
	\end{remark}


\begin{thebibliography}{10}
		
		\bibitem{AleBif2018}
		A.~Alexakis and L.~Biferale.
		\newblock Cascades and transitions in turbulent flows.
		\newblock {\em Physics Reports}, 767-769:1--101, 2018.
		
		\bibitem{ambrosio2013user}
		L.~Ambrosio and N.~Gigli.
		\newblock A user's guide to optimal transport.
		\newblock In {\em Modelling and optimisation of flows on networks}, pages
		1--155. Springer, 2013.
		
		\bibitem{AmbGigSav2008gradient}
		L.~Ambrosio, N.~Gigli, and G.~Savar{\'e}.
		\newblock {\em Gradient flows: in metric spaces and in the space of probability
			measures}.
		\newblock Springer Science \& Business Media, 2008.
		
		\bibitem{Anderson}
		W.~J.~Anderson.
		\newblock {\em Continuous-time {M}arkov chains, an applications-oriented
			approach}.
		\newblock Springer Series in Statistics: Probability and its Applications.
		Springer-Verlag, New York, 1991.
		
		\bibitem{AndBarColForPro2016N}
		L.~Andreis, D.~Barbato, F.~Collet, M.~Formentin, and L.~Provenzano.
		\newblock Strong existence and uniqueness of the stationary distribution for a
		stochastic inviscid dyadic model.
		\newblock {\em Nonlinearity}, 29(3):1156--1169, 2016.
		
		\bibitem{BarBiaFlaMor2013}
		D.~Barbato, L.~A.~Bianchi, F.~Flandoli, and F.~Morandin.
		\newblock A dyadic model on a tree.
		\newblock {\em J. Math. Phys.}, 54(2):021507, 20, 2013.
		
		\bibitem{BarFlaMor2010PAMS}
		D.~Barbato, F.~Flandoli, and F.~Morandin.
		\newblock Uniqueness for a stochastic inviscid dyadic model.
		\newblock {\em Proc. Amer. Math. Soc.}, 138(7):2607--2617, 2010.
		
		\bibitem{BarFlaMor2011AAP}
		D.~Barbato, F.~Flandoli, and F.~Morandin.
		\newblock Anomalous dissipation in a stochastic inviscid dyadic model.
		\newblock {\em Annals of Applied Probability}, 21(6):2424--2446, 2011.
		
		\bibitem{BarMor2013NON}
		D.~Barbato and F.~Morandin.
		\newblock Stochastic inviscid shell models: well-posedness and anomalous
		dissipation.
		\newblock {\em Nonlinearity}, 26(7):1919, 2013.
		
		\bibitem{BarMorRom2011}
		D.~Barbato, F.~Morandin, and M.~Romito.
		\newblock Smooth solutions for the dyadic model.
		\newblock {\em Nonlinearity}, 24(11):3083--3097, 2011.
		
		\bibitem{Bianchi2013}
		L.~A.~Bianchi.
		\newblock Uniqueness for an inviscid stochastic dyadic model on a tree.
		\newblock {\em Electron. Commun. Probab.}, 18:no. 8, 1--12, 2013.
		
		\bibitem{BiaFla2020}
		L.~A.~Bianchi and F.~Flandoli.
		\newblock Stochastic {{Navier}}-{{Stokes Equations and Related Models}}.
		\newblock {\em Milan Journal of Mathematics}, 88(1):225--246, 2020.
		
		\bibitem{BiaMor2017CMP}
		L.~A.~Bianchi and F.~Morandin.
		\newblock Structure {{Function}} and {{Fractal Dissipation}} for an
		{{Intermittent Inviscid Dyadic Model}}.
		\newblock {\em Communications in Mathematical Physics}, 356(1):231--260, 2017.
		
		\bibitem{BrzFlaNekZeg}
		Z.~Brze{\'z}niak, F.~Flandoli, M.~Neklyudov, and B.~Zegarli{\'n}ski.
		\newblock Conservative interacting particles system with anomalous rate of
		ergodicity.
		\newblock {\em Journal of Statistical Physics}, 144(6):1171--1185, 2011.
		
		\bibitem{CheFri09}
		A.~Cheskidov and S.~Friedlander.
		\newblock The vanishing viscosity limit for a dyadic model.
		\newblock {\em Physica D: Nonlinear Phenomena}, 238(8):783--787, 2009.
		
		\bibitem{CheFriPav2007}
		A.~Cheskidov, S.~Friedlander, and N.~Pavlovi{\'c}.
		\newblock Inviscid dyadic model of turbulence: the fixed point and {O}nsager's
		conjecture.
		\newblock {\em J. Math. Phys.}, 48(6):065503, 16, 2007.
		
		\bibitem{DapZab1996ergodicity}
		G.~Da~Prato and J.~Zabczyk.
		\newblock {\em Ergodicity for infinite dimensional systems}, volume 229.
		\newblock Cambridge University Press, 1996.
		
		\bibitem{FriGlaVic2016AIHPPS}
		S.~Friedlander, N.~{Glatt-Holtz}, and V.~Vicol.
		\newblock Inviscid limits for a stochastically forced shell model of turbulent
		flow.
		\newblock {\em Annales de l'Institut Henri Poincar\'e, Probabilit\'es et
			Statistiques}, 52(3):1217--1247, 2016.
		
		\bibitem{FriPav04}
		S.~Friedlander and N.~Pavlovi{\'c}.
		\newblock Blowup in a three-dimensional vector model for the {E}uler equations.
		\newblock {\em Comm. Pure Appl. Math.}, 57(6):705--725, 2004.
		
		\bibitem{HytNeeVerWei2016}
		T.~Hyt{\"o}nen, J.~{\noopsort{neerven}}van Neerven, M.~Veraar, and L.~Weis.
		\newblock {\em Analysis in {{Banach Spaces}}: {{Volume I}}: {{Martingales}} and
			{{Littlewood}}-{{Paley Theory}}}, volume~1 of {\em Ergebnisse Der
			{{Mathematik}} Und Ihrer {{Grenzgebiete}}. 3. {{Folge}} / {{A Series}} of
			{{Modern Surveys}} in {{Mathematics}}}.
		\newblock {Springer International Publishing}, 2016.
		
		\bibitem{KatPav04}
		N.~H.~Katz and N.~Pavlovi{\'c}.
		\newblock Finite time blow-up for a dyadic model of the {E}uler equations.
		\newblock {\em Trans. Amer. Math. Soc.}, 357(2):695--708 (electronic), 2005.
		
		\bibitem{rachev1998mass}
		S.~T.~Rachev and L.~R{\"u}schendorf.
		\newblock {\em Mass Transportation Problems: Volume I: Theory}, volume~1.
		\newblock Springer Science \& Business Media, 1998.
		
		\bibitem{Rom2013}
		M.~Romito.
		\newblock Uniqueness and blow-up for a stochastic viscous dyadic model.
		\newblock {\em Probability Theory and Related Fields}, pages 1--30, 2013.
		
	\end{thebibliography}
\end{document}